\documentclass[10pt]{article}
\usepackage[utf8]{inputenc}
\usepackage[a4paper,margin=1.2in]{geometry}

\title{Non-existence and strong ill-posedness in $C^{k,\beta}$ for the generalized Surface Quasi-geostrophic equation}
\author{Diego C\'ordoba\footnote{dcg@icmat.es}\quad and Luis Mart\'inez-Zoroa\footnote{luis.martinez@icmat.es}\\ \\ \small Instituto de Ciencias Matem\'aticas CSIC-UAM-UCM-UC3M }

\usepackage{graphicx}
\usepackage{amsmath}
\usepackage{ dsfont }
\usepackage{amsfonts}
\usepackage{amsthm}

\newtheorem{theorem}{Theorem}[section]

\newtheorem{lemma}[theorem]{Lemma}
\newtheorem{remark}{Remark}

\begin{document}
\maketitle

\begin{abstract}
    We consider solutions to the generalized Surface Quasi-geostrophic equation ($\gamma$-SQG) when the velocity is more singular than the active scalar function (i.e. $\gamma\in(0,1)$). In this paper we establish strong ill-posedness in $C^{k,\beta}$ ($k\geq 1$, $\beta\in(0,1]$ and $k+\beta>1+\gamma$) and we also construct solutions  in $\mathds{R}^2$   that initially are in $C^{k,\beta}\cap L^2$ but are not in $C^{k,\beta}$ for $t>0$.  Furthermore these solutions stay in $H^{k+\beta+1-2\delta}$ for some small $\delta$ and an arbitrarily long time.
\end{abstract}

\section{Introduction}

In this paper we consider a family of active scalars in two dimensions, that are driven by an incompressible flow which is more singular than the scalar itself. More precisely, we say a function $w(x,t):\mathds{R}^2\times \mathds{R}_+\rightarrow \mathds{R}$, $w(x,t)\in H^{s}$, $s>2+\gamma$ is a solution to the generalized Surface Quasi-geostrophic equation ($\gamma$-SQG equation) with initial conditions $w(x,0)=w_{0}(x)$ if the equation

\begin{eqnarray}\label{gSQG}
\frac{\partial w}{\partial t} + v_{1,\gamma}\frac{\partial w}{\partial x_{1}} + v_{2,\gamma}\frac{\partial w}{\partial x_{2}}= 0
\end{eqnarray}
is fulfilled for every $x\in\mathds{R}^2$, with $v=(v_{1,\gamma},v_{2,\gamma})$ defined by
$$v_{1,\gamma}=-\frac{\partial}{\partial x_{2}}\Lambda^{-1+\gamma} w,\ v_{2,\gamma}=\frac{\partial}{\partial x_{1}}\Lambda^{-1+\gamma} w.$$  We denote $\Lambda^{\alpha} f\equiv (-\Delta)^{\frac{\alpha}{2}} f$ by the Fourier transform $\widehat{\Lambda^{\alpha} f}(\xi) = |\xi|^{\alpha}\widehat{f} (\xi)$. 

 This family of equations becomes the 2D incompressible Euler equations and the SQG equation (see \cite{Majda}, \cite{Chaewu} and \cite{Chaecordoba}) when $\gamma=-1,0$ respectively. For the entire range $\gamma\in(-1,1)$, it has been shown in \cite{Chaecordoba} that this system is locally well-possed in $H^{s}$ for $s>2+\gamma$.  In \cite{ChaeWu2} the authors proved local existence in the critical Sobolev space $H^2$ for a logarithmic inviscid regularization of SQG (see also \cite{Jolly} for the $\gamma$-SQG case). Regarding $H^s$ norm growth see \cite{Nazarov} where the authors show that there exists initial conditions with arbitrarily small $H^{s}$ norm ($s\geq 11$) that become large after a long period of time. Finite time formation of singularities for  initial data in $H^{s}$ for $s>2+\gamma$ remains an open problem for the range $\gamma\in(-1,1)$. On the other hand, there are a few rigorous constructions of non-trivial global solutions in $H^s$ (for some s satisfying $s> 2+\gamma$) in \cite{CCG2}, \cite{GS}, \cite{CCG} and \cite{We}.

 For both 2D Euler and SQG, the critical Sobolev space has been studied in  \cite{Bourgainsobolev}, \cite{Zoroacordoba}, \cite{Elgindisobolev} and \cite{Injee}, where it has been established non-existence of uniformly bounded solutions in $H^{2+\gamma}$ (see also  \cite{Kukavica} and \cite{Kwon} for other ill-posedness results for active scalars). Furthermore, for $\gamma=0$, in a range of supercritical Sobolev spaces ($s\in(\frac32,2)$) non-existence of solutions in $H^{s}$ is proved in \cite{Zoroacordoba}. 

Global existence of solutions in $L^2$ have already been obtained for SQG in \cite{Resnick}   (see \cite{Chaecordoba}, for an extension in the case $\gamma\in(0,1)$), but uniqueness is not known and in fact there is non uniqueness of solutions for  $\Lambda^{-1}w\in C_t^{\sigma}C_x^{\beta}$ with $\frac12<\beta<\frac45$ and $\sigma < \frac{\beta}{2-\beta}$ (see \cite{Buckmaster}).

Local well-posedness in $C^{k,\beta}\cap L^{q}$ ($k\geq 1$, $\beta\in(0,1)$, $q>1$) was established for SQG in \cite{Wu}, and recently the result was improved in \cite{Ambrose}, where  the requirement $w\in L^{q}$ has been dropped. The same result as in \cite{Wu} applies for the range $\gamma\in [-1,0)$ for $\beta\in[0,1]$ (for the a priori estimates see \cite{Chaewu}). Nevertheless, as shown in \cite{Zoroacordoba} for $\gamma=0$, there is no local existence result when $\beta=0,1$ (in the case of 2D Euler equations see \cite{Bourgaincm} and \cite{Elgindi} for a proof of strong ill-posedness and non-existence of uniformly bounded solutions for the velocity $v$ in $C^k$).

Global in time exponential growth of solutions was obtained in \cite{SmallscaleSQG} for the range $\gamma\in(-1,1)$ in $C^{1,\beta}$, with $\beta\in[f(\gamma),1]$.
\subsection{Main results}

The aim of this paper is to prove strong ill-posedness in $C^{k,\beta}$ ($k\geq 1$, $\beta\in(0,1]$ and $k+\beta>1+\gamma$) of the $\gamma$-SQG equation for the range  $\gamma\in(0,1)$. We also construct solutions  in $\mathds{R}^2$ of $\gamma$-SQG  that initially are in $C^{k,\beta}\cap L^2$ but are not in $C^{k,\beta}$ for $t>0$.

\begin{theorem} (Strong ill-posedness)
Given $k$ a natural number, $\beta\in(0,1]$, $\gamma\in(0,1)$ and $\delta\in(0,\frac12)$ with $k+\beta-2\delta> 1+\gamma$, then for any $T,t_{crit,}\epsilon_{1},\epsilon_{2}>0$, there exist a $H^{k+\beta+1-\delta}$ function $w(x,0)$ such that $||w(x,0)||_{C^{k,\beta}}\leq \epsilon_{1}$ and the only solution to (\ref{gSQG}) in $H^{k+\beta+1-\delta}$ with initial conditions $w(x,0)$ exists for $t\in[0,T]$ and fulfills that
$$||w(x,t_{crit})||_{C^{k,\beta}}\geq \frac{1}{\epsilon_{2}}.$$
\end{theorem}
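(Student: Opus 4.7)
The plan is to combine a carefully designed family of ``pseudo-solutions'' that exhibit norm inflation with an $H^{k+\beta+1-\delta}$ stability argument; the key observation that unlocks the result is that $H^{k+\beta+1-\delta}$ does \emph{not} embed into $C^{k,\beta}$ in $\mathds{R}^2$ (only into $C^{k,\beta-\delta'}$ with a loss), so a solution staying bounded in the Sobolev space can simultaneously have its $C^{k,\beta}$ norm escape to infinity.

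First I would construct a building block $w_{0}$ consisting of a smooth, compactly supported ``background'' $W$ that generates a hyperbolic/straining velocity near the origin, plus a small high-frequency (or small-scale) perturbation $\varphi_\lambda$ localized at that straining point. The background is picked so that $\nabla v[W](0)$ is a nonzero traceless symmetric matrix, producing exponential stretching of level sets of $\varphi_\lambda$ along one eigendirection. A direct computation using the transport equation shows that the $C^{k,\beta}$ seminorm of the transported perturbation grows like $e^{c(k+\beta)t}$ while the $C^{k,\beta-\delta'}$ and $H^{k+\beta+1-\delta}$ norms grow at a strictly slower rate; this is where the strict inequality $k+\beta-2\delta>1+\gamma$ is essential, since it both places us in the local-well-posedness window of \cite{Chaecordoba} and guarantees a genuine gap between $C^{k,\beta}$ and the controlling Sobolev norm. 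By taking $\lambda$ sufficiently small and rescaling $W$ to amplitude $\eta(\lambda)$, I can arrange $\|w_0\|_{C^{k,\beta}}\le \epsilon_1$ while the leading-order growth factor at time $t_{crit}$ exceeds $1/\epsilon_2$ by several orders of magnitude.

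Next I would promote this linear/approximate picture to the true nonlinear flow. Let $\widetilde{w}(t)$ denote the function obtained by advecting the perturbation along the flow map of the frozen velocity $v[W]$; then $\widetilde{w}$ solves (\ref{gSQG}) up to an explicit error $R(t)$ coming from (i) the nonlinear self-interaction of $\varphi_\lambda$ and (ii) the drift of $W$ itself. Both terms can be estimated in $H^{k+\beta+1-\delta}$ and $C^{k,\beta-\delta'}$ using the singular-integral bounds for the Biot–Savart operator $\nabla^\perp\Lambda^{\gamma-1}$; the point is that the ``bad'' direction (the $C^{k,\beta}$ growth) comes only from the leading transport term, whereas the error sits in strictly better spaces. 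A Gronwall argument for $w-\widetilde{w}$ in $H^{k+\beta+1-\delta}$, combined with the commutator estimates underlying the local theory of \cite{Chaecordoba}, then extends the solution to all of $[0,T]$ and shows that $\widetilde{w}(t_{crit})$ is a genuine approximation in, say, $C^{k,\beta-\delta'}$, so its $C^{k,\beta}$ norm inflation is inherited by $w$.

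The main obstacle, as in the SQG case treated in \cite{Zoroacordoba}, will be quantifying the stability in a norm that is weak enough to tolerate the $C^{k,\beta}$ blow-up but strong enough to imply it. Concretely, one has to show that the commutator $[\Lambda^{k+\beta+1-\delta},v\cdot\nabla]w$ and the nonlocal contribution $\nabla^\perp\Lambda^{\gamma-1}$ applied to a highly anisotropic, rapidly stretching profile do not destroy the $H^{k+\beta+1-\delta}$ bound before time $T$; this forces a delicate choice of the two scales (the amplitude $\eta(\lambda)$ and the perturbation scale $\lambda$) as functions of $(T,t_{crit},\epsilon_1,\epsilon_2,\delta)$, exploiting the budget $k+\beta-2\delta>1+\gamma$ to gain a small power of $\lambda$ in every error term. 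Once the parameters are tuned, uniqueness in $H^{k+\beta+1-\delta}$ from \cite{Chaecordoba} guarantees that the constructed $w$ is \emph{the} solution and yields the stated conclusion.
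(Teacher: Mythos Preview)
Your proposal has a genuine gap in the growth mechanism. Under transport by a hyperbolic background with strain rate $\mu$, the $C^{k,\beta}$ seminorm of a localized perturbation grows like $e^{(k+\beta)\mu t}$, but the $H^{k+\beta+1-\delta}$ norm grows like $e^{(k+\beta+1-\delta)\mu t}$, which is \emph{faster} since $1-\delta>0$; the Sobolev index counts more derivatives than $C^{k,\beta}$, and stretching amplifies each norm at its own derivative count. So your claim that $H^{k+\beta+1-\delta}$ grows at a strictly slower rate is false. More fatally, since $k+\beta>1+\gamma$ the operator $w\mapsto\nabla v_\gamma[w]$ is bounded from $C^{k,\beta}$ to $L^\infty$, so any background $W$ with $\|W\|_{C^{k,\beta}}\le\epsilon_1$ produces a strain rate $\mu=O(\epsilon_1)$. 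The growth factor $e^{(k+\beta)\mu t_{crit}}$ is then bounded by $e^{C\epsilon_1 t_{crit}}$, which cannot be pushed above $1/\epsilon_2$ for arbitrary small $\epsilon_1,\epsilon_2$; no choice of $\lambda$ or $\eta(\lambda)$ fixes this, because $\lambda$ does not enter the growth factor.

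The paper uses a completely different, dispersive mechanism. The pseudo-solution is a radial background $\lambda_0 f_1(r)$ (with $\partial_r f_1=1$ near $r=1$) plus a superposition of angular modes $f_2\cos(N(M_j+l)\alpha+\text{phase})$. The driving term is \emph{not} $v[\text{background}]\cdot\nabla(\text{perturbation})$ but rather $v_{r,\gamma}[\text{perturbation}]\,\partial_r(\lambda_0 f_1)$: Lemma~\ref{errorvsmall} shows $v_{r,\gamma}$ acts on each mode approximately as multiplication by $(N(M_j+l))^\gamma C_\gamma$ plus a $\pi/2$ phase shift, so each mode simply rotates at angular rate $\lambda_0 C_\gamma(N(M_j+l))^\gamma$. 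The high frequency $N$ enters through the $\Lambda^\gamma$ in the velocity law, giving a rate $\sim M/\tilde t$ that is large even though $\lambda_0\sim N^{-\gamma}$ is tiny. Since only phases evolve, the $H^s$ norm of the pseudo-solution is essentially constant in time, while the $C^{k,\beta}$ norm changes from $O(\epsilon_1)$ (Lemma~\ref{normackbetapert}: destructive interference from well-separated initial phases $\alpha_j^1$) to $\gtrsim\lambda$ (Lemma~\ref{crecimientockbeta}: constructive interference at $t=\tilde t$). The error between pseudo-solution and true solution is then controlled in $H^{k+\beta+1-\delta}$ (Lemma~\ref{evolucionerror}), and the theorem follows immediately from Theorem~\ref{crecimientoperturbado} with $v_{error}=0$.
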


\begin{theorem} (Non-existence)
Given $k$ a natural number, $\beta\in(0,1]$, $\gamma\in(0,1)$ and $\delta\in(0,\frac12)$ with $k+\beta-2\delta> 1+\gamma$, then for any $T$ and $\epsilon>0$, there exist a $H^{k+\beta+1-\frac32\delta}$ function $w(x,0)$ such that $||w(x,0)||_{C^{k,\beta}}\leq \epsilon$ and that the only solution to  (\ref{gSQG}) in $H^{k+\beta+1-\frac32 \delta}$  with initial conditions $w(x,0)$ exists for $t\in[0,T]$ and fulfills that, for $t\in(0,T]$, $||w(x,t)||_{C^{k,\beta}}=\infty.$
\end{theorem}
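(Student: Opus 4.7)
The plan is to deduce Theorem 2 from Theorem 1 by building an initial datum as a countable superposition of widely separated, rescaled copies of the building blocks produced by Theorem 1, each tuned so that its own $C^{k,\beta}$ norm becomes enormous at a prescribed time $t_n$, with $\{t_n\}$ arranged densely in $(0,T]$. The extra $\delta/2$ of Sobolev regularity lost between Theorem 1 and Theorem 2 is precisely what is needed for the series defining the initial datum to converge in $H^{k+\beta+1-\frac32\delta}$, while keeping the $C^{k,\beta}$ norm of the sum at most $\epsilon$.

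More concretely, fix $\delta'\in(0,\delta)$ with $k+\beta-2\delta'>1+\gamma$ and a dense sequence $\{t_n\}\subset(0,T]$. For each $n$, apply Theorem 1 (with suitably chosen parameters and time horizon) to obtain an initial datum $u_{n,0}\in H^{k+\beta+1-\delta'}$ whose $\gamma$-SQG solution $u_n$ exists on a long enough interval and satisfies $\|u_n(t^{\ast}_n)\|_{C^{k,\beta}}\geq n$ for some $t^{\ast}_n$. Choose centres $x_n\in\mathds{R}^2$, scales $\rho_n>0$ and amplitudes $\lambda_n>0$, set
$$
w_0(x)=\sum_{n\geq 1}\lambda_n\, u_{n,0}\bigl(\rho_n^{-1}(x-x_n)\bigr),
$$
and use the natural dilation/amplification scalings of $\gamma$-SQG to match the rescaled blow-up time of the $n$-th block to the prescribed $t_n$. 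Exploiting that $\|\lambda u(\rho^{-1}\cdot)\|_{C^{k,\beta}}=\lambda\rho^{-(k+\beta)}\|u\|_{C^{k,\beta}}$ and $\|\lambda u(\rho^{-1}\cdot)\|_{H^s}\approx \lambda\rho^{1-s}\|u\|_{H^s}$ in two dimensions, one calibrates $(\lambda_n,\rho_n,x_n)$ so that the summands have pairwise disjoint supports with $|x_n-x_m|$ growing very fast, the total $C^{k,\beta}$ norm is at most $\epsilon$, and the $H^{k+\beta+1-\frac32\delta}$ norms of the summands are summable — the extra $\delta/2$ Sobolev slack being exactly what permits (iii) while respecting (ii) after the rescaling. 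By local well-posedness in $H^s$ for $s>2+\gamma$ from \cite{Chaecordoba}, the resulting $w_0$ yields a unique solution $w$ of (\ref{gSQG}) on $[0,T]$.

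The analytic heart of the argument is to show that under the flow of $\gamma$-SQG the individual blow-ups of the blocks survive the nonlinear-nonlocal coupling induced by the sum. Although the velocity $v=\nabla^{\perp}\Lambda^{-1+\gamma}w$ has algebraic tails of order $|x|^{-(1+\gamma)}$, by taking $|x_n-x_m|$ to grow sufficiently fast one can make the velocity produced at the location of the $n$-th block by all other blocks arbitrarily small (in, say, $C^{1}$) on $[0,T]$. A perturbative transport/stability estimate in $C^{k,\beta}$ then yields, for every $n$ and $t\in[0,T]$,
$$
\|w(t)\|_{C^{k,\beta}}\geq \bigl\|\lambda_n\, u_n(\rho_n^{-1}\cdot,t)\bigr\|_{C^{k,\beta}}-o(1),
$$
so that the isolated blow-up of each block is recovered from the full sum.

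The most delicate point — and the main obstacle I expect — is promoting the spot-time blow-up at $t_n$ to blow-up at \emph{every} $t\in(0,T]$. For this one needs each block to produce a huge $C^{k,\beta}$ norm not only at $t_n$ but on a small interval $I_n\ni t_n$, either through an upper-semicontinuity argument for the $C^{k,\beta}$ norm along the $H^{k+\beta+1-\delta'}$-Sobolev flow, or by replacing each block by a short chain of Theorem 1-type blocks whose critical times span $I_n$. Arranging the $I_n$ so that every $t\in(0,T]$ is covered by infinitely many of them, and combining this with the previous lower bound, then forces $\|w(t)\|_{C^{k,\beta}}=\infty$ throughout $(0,T]$, completing the proof.
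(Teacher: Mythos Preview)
Your high-level architecture --- superpose far-separated copies of Theorem~1 building blocks, with critical times covering $(0,T]$, and control the summability via the extra $\tfrac{\delta}{2}$ of Sobolev slack --- matches the paper's strategy. However, there is a genuine gap at the step you label ``a perturbative transport/stability estimate in $C^{k,\beta}$'', and the paper's proof differs from yours in exactly the way needed to close it.

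The issue is that Theorem~1, as stated, says nothing about what happens when the block is transported not by its own velocity but by its own velocity \emph{plus a small external drift} coming from the other blocks. You assert that the $n$-th block inside the full solution $w$ stays close in $C^{k,\beta}$ to the isolated solution $\lambda_n u_n(\rho_n^{-1}\cdot,t)$, but for $\gamma$-SQG with $\gamma>0$ the velocity is \emph{more singular} than the scalar, and no off-the-shelf $C^{k,\beta}$ stability estimate of that type is available; proving one is essentially the content of the paper's Section~4. The paper does not deduce non-existence from Theorem~1 at all: it uses the strictly stronger Theorem~4.9, whose building blocks are \emph{designed from the outset} to tolerate an external divergence-free velocity $v_{\mathrm{error}}$ with $\|v_{\mathrm{error}}\|_{C^m}\le N^{m-k-\beta-2}$, and still exhibit $C^{k,\beta}$ growth on a whole interval $[t_{\mathrm{crit}}(1-C),t_{\mathrm{crit}}]$. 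This robustness is obtained through the pseudo-solution comparison (Lemma~4.8) in $H^{k+\beta+1-\delta}$, not through any $C^{k,\beta}$ stability. Once the blocks are placed far enough apart, the velocity each block feels from the others satisfies the required $C^{k+2}$ smallness, and Theorem~4.9 applies directly to each piece $1_{B}\,w$.

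Two further points where your sketch diverges from what actually works. First, the paper uses \emph{no rescaling}: summability of the $H^{k+\beta+1-\frac32\delta}$ norms is obtained simply by taking the internal frequency parameter $N$ large (this is the role of $\epsilon_3$ in Theorem~4.9), and the blocks are placed by pure translation. Your dilation scheme is constrained by the $\gamma$-SQG scaling $\mu=\lambda\rho^{-1-\gamma}$, so you cannot choose $(\lambda_n,\rho_n)$ and the blow-up time independently; this is avoidable but unnecessary. Second, your ``dense $\{t_n\}$ plus upper-semicontinuity'' route to covering all of $(0,T]$ is replaced in the paper by a cleaner finite-cover argument: since each block blows up on an interval of length $\ge C\,t_{\mathrm{crit}}$, for each level $j$ finitely many blocks (indexed by $i=1,\dots,G(j,\epsilon)$) suffice to cover $[1/j,T]$ with $\|w\|_{C^{k,\beta}}\ge j$, and letting $j\to\infty$ gives $\|w(t)\|_{C^{k,\beta}}=\infty$ for every $t\in(0,T]$.
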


\begin{remark}
Although technically we do not prove the results for the case $\beta=0$, the results in $C^{k,1}$ actually gives us strong ill-posedness and non-existence in the space $C^{k+1}$.
\end{remark}

\subsection{Strategy of the proof}

To obtain the ill-posedness result, we first focus on finding a pseudo-solution $\bar{w}$ for $\gamma-SQG$ that exhibits the behaviour we would like to show, mainly that it has a small $C^{k,\beta}$ norm initially and this norm grows a lot in a very short period of time. We say that $\bar{w}$ is a pseudo-solution if it fulfils an evolution equation of the form

\begin{eqnarray}
\frac{\partial \bar{w}}{\partial t} + v_{1,\gamma}\frac{\partial \bar{w}}{\partial x_{1}} + v_{2,\gamma}\frac{\partial \bar{w}}{\partial x_{2}}+F(x,t)= 0
\end{eqnarray}
with $v=(v_{1,\gamma},v_{2,\gamma})$ defined by
$$v_{1,\gamma}=-\frac{\partial}{\partial x_{2}}\Lambda^{-1+\gamma} \bar{w},\ v_{2,\gamma}=\frac{\partial}{\partial x_{1}}\Lambda^{-1+\gamma} \bar{w}.$$
This, of course, is not a very restrictive definition, but in general we will only use this definition for $\bar{w}$ when $F$ is small in a relevant norm. Once we have a pseudo-solution $\bar{w}$ with the desired behaviour, if $F$ is small and  both $F$ and $\bar{w}$ are regular enough, then $\bar{w}\approx w$, with $w$ the solution to (\ref{gSQG}) with the same initial conditions as $\bar{w}$, and therefore $w$ shows the same fast growth as $\bar{w}$.

The details about how to find a pseudo-solution with the desired behaviour are somewhat technical, but the rough idea is to consider initial conditions that in polar coordinates have the form

$$w_{N}(r,\alpha,0)=f(r)+\frac{g(r,N\alpha)}{N^{k+\beta}},$$
that is, a radial function (which is a stationary solution to $\gamma$-SQG) plus a perturbation of frequency $N$ in $\alpha$. The evolution of $w_{pert,N}(r,\alpha,t):=w_{N}(r,\alpha,t)-f(r)$ satisfies

$$\frac{\partial w_{pert,N}}{\partial t}+v_{\gamma}(w_{pert,N})\cdot\nabla w_{pert,N}+v_{r,\gamma}(w_{pert,N})\frac{\partial f(r)}{\partial r}+\frac{\partial w_{pert,N}}{\partial \alpha}\frac{v_{\alpha,\gamma}(f(r))}{r}=0,$$
where $v_{r,\gamma}, v_{\alpha,\gamma}$ are the radial and angular components of the velocity respectively.

For very big $N$, we have that 
$$v_{\gamma}(w_{pert,N})\cdot\nabla w_{pert,N}\approx 0,\ v_{r,\gamma}(w_{pert,N})\approx C_{\gamma}(-\Delta_{\alpha})^{\frac{\gamma}{2}}H_{\alpha}(w_{pert,N})$$
where $(-\Delta_{\alpha})^{\frac{\gamma}{2}},H_{\alpha}$ are the fractional laplacian and the Hilbert transform respectively with respect to only the variable $\alpha.$ This suggest studying

\begin{equation}\label{1Dapprox}
   \frac{\partial \tilde{w}}{\partial t}+\frac{\partial f(r)}{\partial r}C_{\gamma}(-\Delta_{\alpha})^{\frac{\gamma}{2}}H_{\alpha}(\tilde{w})+\frac{\partial \tilde{w}}{\partial \alpha}\frac{v_{\alpha,\gamma}(f(r))}{r}=0. 
\end{equation}
and using $\bar{w}=f(r)+\tilde{w}$. The system (\ref{1Dapprox}) is relatively simple to study, since it is linear and one dimensional in nature, and one can obtain explicit solutions where the $C^{k,\beta}$ norm grows arbitrarily fast.  Then, once the candidate pseudo-solutions are found, a careful study of the errors involved allows us to obtain ill-posedness.

Moreover, to obtain non-existence, we consider an infinite number of fast growing solutions, and spread them through the plane so that the interactions between them become very small.

\subsection{Outline of the paper} 
The paper is organized as follows. In Section 2, we set the notation used through the paper. In
Section 3, we obtain estimates on the velocity in the radial and angular direction. In section 4, we introduce the pseudo-solutions with the desired properties and establish the necessary estimates on the source term $F(x,t)$. Finally in section 5, we prove strong ill-posedness and non-existence for the space $C^{k,\beta}$.

\section{Preliminaries and notation}


\subsection{Polar coordinates}\label{polarcoordinates}

Many of our computations and functions become much simpler if we use polar coordinates, so  we need to establish some notation in that regard. For the rest of this subsection, we will refer to
$$F:\mathds{R}_{+}\times[0,2\pi)\rightarrow \mathds{R}^2$$
$$(r,\alpha)\rightarrow (r\cos{(\alpha)},r\sin{(\alpha)})$$
the map from polar to cartesian coordinates. Note that the choice of $[0,2\pi)$ for the variable $\alpha$ is arbitrary and any interval of the form $[c,2\pi+c)$ would also work, and in fact we will sometimes consider intervals different from $[0,2\pi)$. These changes in the domain will not be specifically mentioned since they will be clear by context.

Given a function $f(x_{1},x_{2})$ from $\mathds{R}^{2}$ to $\mathds{R}$, we define

$$f^{pol}:\mathds{R}_{+}\times[0,2\pi)\rightarrow \mathds{R}$$
as  $f^{pol}(r,\alpha):=f(F(r,\alpha))$.

For $r>0$, we also have the following equalities
\begin{equation}\label{derx1}
    \frac{\partial f(x_{1},x_{2})}{\partial x_{1}}=\cos{(\alpha(x_{1},x_{2}))} \frac{\partial f^{pol}}{\partial r}(F^{-1}(x_{1},x_{2}))-\frac{1}{r}\sin{(\alpha(x_{1},x_{2}))}\frac{\partial f^{pol}}{\partial \alpha}(F^{-1}(x_{1},x_{2})),
\end{equation}

\begin{equation}\label{derx2}
    \frac{\partial f(x_{1},x_{2})}{\partial x_{2}}=\sin{(\alpha(x_{1},x_{2}))} \frac{\partial f^{pol}}{\partial r}(F^{-1}(x_{1},x_{2}))+\frac{1}{r}\cos{(\alpha(x_{1},x_{2}))}\frac{\partial f^{pol}}{\partial \alpha}(F^{-1}(x_{1},x_{2})).
\end{equation}

Furthermore, for functions such that $supp(f^{pol}(r,\alpha))\subset\{(r,\alpha):r\geq r_{0}\}$ with $r_{0}>0$, we have that for $m=0,1,...$, using (\ref{derx1}) and (\ref{derx2})

$$||f||_{C^{m}}\leq C_{r_{0},m}||f^{pol}||_{C^{m}},$$
where

$$||f^{pol}||_{C^{m}}=\sum_{k=0}^{m}\sum_{i=0}^{k}||\frac{\partial^{k}f^{pol}}{\partial r^{i}\partial \alpha^{k-i}}||_{L^{\infty}},$$
and similarly
\begin{equation}\label{equivcmb}
    ||f||_{C^{m,\beta}}\leq C_{r_{0},m,\beta}||f^{pol}||_{C^{m,\beta}}.
\end{equation}
with
\begin{align*}
    &||f^{pol}(r,\alpha)||_{C^{m,\beta}}=||f^{pol}||_{C^{m}}\\
    &+\sum_{i=0}^{k}sup_{R,\in[0,\infty],A\in[0,2\pi],h_{1}\in[-R,\infty],h_{2}\in[-\pi,\pi]}\frac{|\frac{\partial^{m} f^{pol}}{\partial^{i}r\partial^{m-i} \alpha}(R,A)-\frac{\partial^{m} f}{\partial^{i}r \partial^{m-i} \alpha}(R+h_{1},A+h_{2})}{|h_{1}^{2}+h_{2}^{2}|^{\frac{\beta}{2}}}.\\
\end{align*}

Furthermore, if we restrict ourselves to functions such that $supp(f^{pol}(r,\alpha))\subset\{(r,\alpha):r_{1}\geq r\geq r_{0}\}$ with $r_{1}>r_{0}>0$ then for $m=0,1,...$

$$||f||_{H^{m}}\leq C_{r_{1},r_{0},m}||f^{pol}||_{H^{m}},$$
with

$$||f^{pol}||_{H^{m}}=\sum_{k=0}^{m}\sum_{i=0}^{k}||\frac{\partial^{k}f^{pol}}{\partial r^{i}\partial \alpha^{k-i}}||_{L^{2}}.$$

Since we will need to compute integrals in polar coordinates, for a general set $S$ we will use the notation

$$S^{pol}:=\{(r,\alpha):F(r,\alpha)\in S\}$$
and more specifically, we will use

$$B_{\lambda}^{pol}(R,A):=\{(r,\alpha):|F(r,\alpha)-F(R,A)|\leq \lambda\}$$
with $|(x_{1},x_{2})|=|x_{1}^{2}+x_{2}^{2}|^{\frac12}$ (this is simply the set $B_{\lambda}(R\cos{(A)},R\sin{(A)})$ in polar coordinates). Also, note that, for $R\geq 2\lambda$ (which we will assume from now on) we have
$$B_{\lambda}^{pol}(R,A)\subset [R-\lambda,R+\lambda]\times [A-\arccos{(1-\frac{\lambda^2}{R^2})},A+\arccos{(1-\frac{\lambda^2}{R^2})}].$$

We also define, for $h\in[-\lambda,\lambda]$,

$$S_{\lambda,R,A}(h):=sup(\tilde{\alpha}:(R+h,A+\tilde{\alpha})\in B^{pol}_{\lambda}(R,A))$$
and  defining
$$S_{\lambda,R,A,\infty}:=sup_{h\in[-\lambda,\lambda]}(S_{\lambda,R,A}(h))$$
then for $\tilde{\alpha}\in[-S_{\lambda,R,A,\infty},S_{\lambda,R,A,\infty}]$ we can define

$$P_{\lambda,R,A,+}(\tilde{\alpha}):=sup(h:(R+h,A+\tilde{\alpha})\in B^{pol}_{\lambda}(R,A))$$
$$P_{\lambda,R,A,-}(\tilde{\alpha}):=inf(h:(R+h,A+\tilde{\alpha})\in B^{pol}_{\lambda}(R,A)).$$

When the values of $\lambda, R$ and $A$ are clear by context, we will just write $S(h),S_{\infty},P_{+}(\tilde{\alpha})$ and $P_{-}(\tilde{\alpha})$. A property for $P_{+}(\tilde{\alpha})$ and $P_{-}(\tilde{\alpha})$ that we will need to use later on is that, for $R\in[\frac12,\frac32]$ and $\tilde{\alpha}\in[-S_{\lambda,R,A,\infty},S_{\lambda,R,A,\infty}]$ we have

    $$|P_{\lambda,R,A,+}(\tilde{\alpha})+P_{\lambda,R,A,-}(\tilde{\alpha})|\leq C \lambda^2.$$

Which can be easily obtained using that, since 
$$|F(R,A)-F(r,\alpha)|=|(R-r)^2+2Rr(1-\cos(A-\alpha))|^{\frac12}$$
then
$$P_{\lambda,R,A,+}(\tilde{\alpha})=\frac{-2R(1-\cos(\tilde{\alpha}))+\sqrt{(2R(1-\cos(\tilde{\alpha}))^2-4(2R^2(1-\cos(\tilde{\alpha}))-\lambda^2)}}{2}$$
$$P_{\lambda,R,A,-}(\tilde{\alpha})=\frac{-2R(1-\cos(\tilde{\alpha}))-\sqrt{(2R(1-\cos(\tilde{\alpha}))^2-4(2R^2(1-\cos(\tilde{\alpha}))-\lambda^2)}}{2},$$
so

$$|P_{\lambda,R,A,+}(\tilde{\alpha})+P_{\lambda,R,A,-}(\tilde{\alpha})|=4R(1-\cos(\tilde{\alpha}))\leq C\tilde{\alpha}^2\leq C \lambda^2.$$


\subsection{Other notation}

Given two sets  $X,Y\subset\mathds{R}^2$, we will use $d(X,Y)$ to refer to the distance between the two, that is

$$d(X,Y):=\inf_{x\in X,y\in Y} |x-y|=\inf_{x\in X,y\in Y}|(x_{1}-y_{1})^2+(x_{2}-y_{2})^2|^{\frac12}.$$

Furthermore, given a function $f$ and a set $X$ we define $d(f,X)$ as $d(supp(f),X).$
Also, given a set $X$ and a point $x$ we define the set

$$X-x:=\{y\in\mathds{R}^2: y+x\in X\}.$$

Working in polar coordinates, we will use the notation

$$X^{pol}-(r,\alpha):=\{(\tilde{r},\tilde{\alpha})\in \mathds{R}^2:(\tilde{r}+r,\tilde{\alpha}+\alpha)\in X^{pol}\},$$
where we need to be careful since $X^{pol}-(r,\alpha)\neq (X-F(r,\alpha))^{pol}$.

We will also define, for $A$ a regular enough set, $k\in\mathds{N}$

$$||f(x) 1_{A}||_{C^{k}}:=\sum_{i=0}^{k}\sum_{j=0}^{i}\text{ess-sup}_{x\in A}(\frac{\partial^{i}f(x)}{\partial^{j}x_{1}\partial^{i-j} x_{2}}),$$

$$||f(x)1_{A}||_{H^{k}}:=\sum_{i=0}^{s}\sum_{j=0}^{i} (\int_{A} (\frac{\partial^{i}f(x)}{\partial^{j}x_{1}\partial^{i-j} x_{2}})^2 dx)^{\frac12}.$$

Finally we will use the notation

$$|f|_{C^{k,\beta}}:=\sum_{i=0}^{k}sup_{h_{1},h_{2}\in \mathds{R}}\frac{|\frac{\partial^{k} f}{\partial^{i}x_{1}\partial^{k-i}\partial x_{2}}(y_{1},y_{2})-\frac{\partial^{k} f}{\partial^{i}x_{1}\partial^{k-i}\partial x_{2}}(y_{1}+h_{1},y_{2}+h_{2})|}{|h_{1}^{2}+h_{2}^{2}|^{\frac{\beta}{2}}}.$$

\subsection{The velocity}
We will be considering $\gamma$-SQG, so our scalar $w$ will be transported with a velocity given by

$$v_{\gamma}(w(.))(x)=C(\gamma)P.V. \int_{\mathds{R}^2} \frac{(x-y)^{\perp}w(y)}{|x-y|^{(3+\gamma)/2}}dy_{1}dy_{2}.$$

Since the results are independent of the specific value of $C(\gamma)$, we will just assume $C(\gamma)=1$. Furthermore we will use the notation

$$v_{1,\gamma}(w(.))(x)=v_{\gamma}\cdot(1,0)=P.V. \int_{\mathds{R}^2} \frac{(y_{2}-x_{2})w(y)}{|x-y|^{(3+\gamma)/2}}dy_{1}dy_{2},$$
$$v_{2,\gamma}(w(.))(x)=v_{\gamma}\cdot(0,1)=P.V. \int_{\mathds{R}^2} \frac{(x_{1}-y_{1})w(y)}{|x-y|^{(3+\gamma)/2}}dy_{1}dy_{2}.$$

The operators $v_{\gamma},v_{1,\gamma}$ and $v_{2,\gamma}$ have several useful properties that we will be using later, namely the fact that they commute with cartesian derivatives $\frac{\partial}{\partial x_{1}}$ and $\frac{\partial }{\partial x_{2}}$ (as long as $w$ is regular enough) and also that, for $i=1,2$

$$||v_{i,\gamma}(w)||_{H^{k}}\leq C_{k,\gamma}||w||_{H^{k+\gamma}}.$$

It is unclear (and in fact, untrue) whether these properties translate to the operators $v_{r,\gamma}$ and $v_{\alpha,\gamma}$ that give us the velocity in the radial and polar direction respectively. We can obtain, however, similar properties  for these operators.

We start by noting that

\begin{align}\label{vrv1v2}
    &v_{r,\gamma}(w)=\cos(\alpha(x)) v_{1,\gamma}(w)+\sin(\alpha(x))v_{2,\gamma}(w),\\\nonumber
    &\\\nonumber
&v_{\alpha,\gamma}(w)=\cos(\alpha(x))v_{2,\gamma}(w)-\sin(\alpha(x))v_{1,\gamma}(w),\\ \nonumber
\end{align}

and since $\cos(\alpha(x))$ and $\sin(\alpha(x))$ are $C^{\infty}$ if we are not close to $r=0$, we have that, for $m\in\mathds{Z}$

$$||v_{r,\gamma}(w) 1_{|x|\geq \frac12}||_{H^{m}}\leq C_{m}( ||v_{1,\gamma}(w)||_{H^{m}}+||v_{2,\gamma}(w)||_{H^{m}})\leq C_{m,\gamma}||w||_{H^{m+\gamma}},$$

$$||v_{\alpha,\gamma}(w) 1_{|x|\geq \frac12}||_{H^{m}}\leq C_{m}( ||v_{1,\gamma}(w)||_{H^{m}}+||v_{2,\gamma}(w)||_{H^{m}})\leq C_{m,\gamma}||w||_{H^{m+\gamma}}.$$

Furhtermore, if we differentiate with respect to $\frac{\partial}{\partial x_{i}}$, $i=1,2$ we get

$$\frac{\partial v_{r,\gamma}(w)}{\partial x_{i}}= v_{r,\gamma}(\frac{\partial w}{\partial x_{i}})+\frac{\partial \cos(\alpha(x))}{\partial x_{i}} v_{1,\gamma}(w)+\frac{\partial \sin(\alpha(x))}{\partial x_{i}}v_{2,\gamma}(w),$$

$$\frac{\partial v_{\alpha,\gamma}(w)}{\partial x_{i}}=v_{\alpha,\gamma}(\frac{\partial w}{\partial x_{i}})+\frac{\partial \cos(\alpha(x))}{\partial x_{i}}v_{2,\gamma}(w)-\frac{\partial \sin(\alpha(x))}{\partial x_{i}}v_{1,\gamma}(w).$$

With this, using induction and if we only consider $|x|\geq \frac12$ we get that, for $m_{1},m_{2}\in \mathds{Z}$

\begin{align*}
    &|\frac{\partial^{m_{1}+m_{2}} v_{r,\gamma}(w)}{\partial x_{1}^{m_{1}}\partial x_{2}^{m_{2}}}(x)- v_{r,\gamma}(\frac{\partial^{m_{1}+m_{2}} w}{\partial x_{1}^{m_{1}}\partial x_{2}^{m_{2}}})(x)|\\
    &\leq C \sum_{k=0}^{m_{1}+m_{2}-1} \sum_{j=0}^{k} |\frac{\partial^{k} v_{1,\gamma}(w)}{\partial x_{1}^{j}\partial x_{2}^{k-j}}(x)| +|\frac{\partial^{k} v_{2,\gamma}(w)}{\partial x_{1}^{j}\partial x_{2}^{k-j}}(x)|,\\
\end{align*}

\begin{align*}
    &|\frac{\partial^{m_{1}+m_{2}} v_{\alpha,\gamma}(w)}{\partial x_{1}^{m_{1}}\partial x_{2}^{m_{2}}}(x)- v_{\alpha,\gamma}(\frac{\partial^{m_{1}+m_{2}} w}{\partial x_{1}^{m_{1}}\partial x_{2}^{m_{2}}})(x)|\\
    &\leq C \sum_{k=0}^{m_{1}+m_{2}-1} \sum_{j=0}^{k} |\frac{\partial^{k} v_{1,\gamma}(w)}{\partial x_{1}^{j}\partial x_{2}^{k-j}}(x)| +|\frac{\partial^{k} v_{2,\gamma}(w)}{\partial x_{1}^{j}\partial x_{2}^{k-j}}(x)|,\\
\end{align*}
and thus
\begin{align}\label{dercomr}
    &||(\frac{\partial^{m_{1}+m_{2}} v_{r,\gamma}(w)}{\partial x_{1}^{m_{1}}\partial x_{2}^{m_{2}}}- v_{r,\gamma}(\frac{\partial^{m_{1}+m_{2}} w}{\partial x_{1}^{m_{1}}\partial x_{2}^{m_{2}}}))1_{|x|\geq \frac12}||_{L^{\infty}}\\\nonumber
    &\leq C (||v_{1,\gamma}(w) 1_{|x|\geq \frac12}||_{C^{m_{1}+m_{2}-1}}+||v_{2,\gamma}(w) 1_{|x|\geq \frac12}||_{C^{m_{1}+m_{2}-1}}),\\\nonumber
    &||(\frac{\partial^{m_{1}+m_{2}} v_{r,\gamma}(w)}{\partial x_{1}^{m_{1}}\partial x_{2}^{m_{2}}}- v_{r,\gamma}(\frac{\partial^{m_{1}+m_{2}} w}{\partial x_{1}^{m_{1}}\partial x_{2}^{m_{2}}}))1_{|x|\geq \frac12}||_{L^{2}}\\\nonumber
    &\leq C (||v_{1,\gamma}(w) 1_{|x|\geq \frac12}||_{H^{m_{1}+m_{2}-1}}+||v_{2,\gamma}(w) 1_{|x|\geq \frac12}||_{H^{m_{1}+m_{2}-1}})\\\nonumber
\end{align}

\begin{align}\label{dercomalpha}
    &||(\frac{\partial^{m_{1}+m_{2}} v_{\alpha,\gamma}(w)}{\partial x_{1}^{m_{1}}\partial x_{2}^{m_{2}}}- v_{\alpha,\gamma}(\frac{\partial^{m_{1}+m_{2}} w}{\partial x_{1}^{m_{1}}\partial x_{2}^{m_{2}}}))1_{|x|\geq \frac12}||_{L^{\infty}}\\\nonumber
    &\leq C (||v_{1,\gamma}(w) 1_{|x|\geq \frac12}||_{C^{m_{1}+m_{2}-1}}+||v_{2,\gamma}(w) 1_{|x|\geq \frac12}||_{C^{m_{1}+m_{2}-1}});\\\nonumber
    &||(\frac{\partial^{m_{1}+m_{2}} v_{\alpha,\gamma}(w)}{\partial x_{1}^{m_{1}}\partial x_{2}^{m_{2}}}- v_{\alpha,\gamma}(\frac{\partial^{m_{1}+m_{2}} w}{\partial x_{1}^{m_{1}}\partial x_{2}^{m_{2}}}))1_{|x|\geq \frac12}||_{L^{2}}\\\nonumber
    &\leq C (||v_{1,\gamma}(w) 1_{|x|\geq \frac12}||_{H^{m_{1}+m_{2}-1}}+||v_{2,\gamma}(w) 1_{|x|\geq \frac12}||_{H^{m_{1}+m_{2}-1}})\\\nonumber
\end{align}


with $C$ depending on $m_{1}$ and $m_{2}$. 

\section{Bounds for the velocity}

Since we will work in polar coordinates, it will be necessary to obtain expressions for the velocity in the radial and angular direction. These expressions are, assuming $w(x)$ is a $C^1$ function with compact support,

$$v^{pol}_{r,\gamma}(w)(r,\alpha)=\int_{[-r,\infty]\times[-\pi,\pi]}\frac{(r+h)^2\sin(\alpha')(w^{pol}(r+h,\alpha'+\alpha)-w^{pol}(r,\alpha))}{|h^2+2r(r+h)(1-\cos(\alpha'))|^{(3+\gamma)/2}}d\alpha'dh$$

$$v^{pol}_{\alpha,\gamma}(w)(r,\alpha)=\int_{[-r,\infty]\times[-\pi,\pi]}\frac{(r+h)(r-(r+h)\cos(\alpha'))(w^{pol}(r+h,\alpha'+\alpha)-w^{pol}(r,\alpha))}{|h^2+2r(r+h)(1-\cos(\alpha'))|^{(3+\gamma)/2}}d\alpha'dh.$$

These expressions, however, hide some cancellation of the kernel when we are far from the support of $w$. Therefore, given a $C^1$ function $w$ with support in $B_{\lambda}(R\cos(A),R\sin(A))$, $\frac32>R>\frac12$, $\lambda\leq \frac{1}{100}$ we will use the expressions

$$v^{pol}_{r,\gamma}(w)(r,\alpha)=\int_{B^{pol}_{4\lambda}(r,\alpha)-(r,\alpha)}\frac{(r+h)^2\sin(\alpha')(w^{pol}(r+h,\alpha'+\alpha)-w^{pol}(r,\alpha))}{|h^2+2r(r+h)(1-\cos(\alpha'))|^{(3+\gamma)/2}}d\alpha'dh$$

$$v^{pol}_{\alpha,\gamma}(w)(r,\alpha)=\int_{B^{pol}_{4\lambda}(r,\alpha)-(r,\alpha)}\frac{(r+h)(r-(r+h)\cos(\alpha'))(w^{pol}(r+h,\alpha'+\alpha)-w^{pol}(r,\alpha))}{|h^2+2r(r+h)(1-\cos(\alpha'))|^{(3+\gamma)/2}}d\alpha'dh$$
when $(r,\alpha)\in B_{2\lambda}(R,A)$ and

$$v^{pol}_{r,\gamma}(w)(r,\alpha)=\int_{supp(w^{pol})-(r,\alpha)}\frac{(r+h)^2\sin(\alpha')w^{pol}(r+h,\alpha'+\alpha)}{|h^2+2r(r+h)(1-\cos(\alpha'))|^{(3+\gamma)/2}}d\alpha'dh$$

$$v^{pol}_{\alpha,\gamma}(w)(r,\alpha)=\int_{supp(w^{pol})-(r,\alpha)}\frac{(r+h)(r-(r+h)\cos(\alpha'))w^{pol}(r+h,\alpha'+\alpha)}{|h^2+2r(r+h)(1-\cos(\alpha'))|^{(3+\gamma)/2}}d\alpha'dh$$
when $(r,\alpha)\notin B_{2\lambda}(R,A)$.

Although the expression for $B^{pol}_{4\lambda}(r,\alpha)$ is not simple, it will be enough for our computations to use the properties we obtained in subsection \ref{polarcoordinates}.


We are particularly interested in obtaining the velocity produced by $w$ with support very concentrated around some point far from $r=0$ (say $r=1$ for simplicity), and for this we start with the following technical lemma.

\begin{lemma}\label{aproxkernel}
Given $\lambda\leq \frac{1}{100},$ and a $C^1$ function $w(x)$ with $supp(w)\subset B_{\lambda}(\cos(c), \sin(c))$, $c\in\mathds{R}$, $\lambda\leq \frac{1}{100}$ we have that if $(r,\alpha)\in B_{2\lambda}(\cos(c),\sin(c))$ then

$$|v^{pol}_{r,\gamma}(w)(r,\alpha)-\int_{B^{pol}_{4\lambda}(r,\alpha)-(r,\alpha)}\frac{r^2\alpha'(w^{pol}(r+h,\alpha'+\alpha)-w^{pol}(r,\alpha))}{|h^2+r^2(\alpha')^2|^{(3+\gamma)/2}}d\alpha'dh|\leq C ||w||_{L^{\infty}}\lambda^{1-\gamma},$$

$$|v^{pol}_{\alpha,\gamma}(w)(r,\alpha)+\int_{B^{pol}_{4\lambda}(r,\alpha)-(r,\alpha)}\frac{rh(w^{pol}(r+h,\alpha'+\alpha)-w^{pol}(r,\alpha))}{|h^2+r^2(\alpha')^2|^{(3+\gamma)/2}}d\alpha'dh|\leq C ||w||_{L^{\infty}}\lambda^{1-\gamma},$$



with $C$ depending on $\gamma$.
\end{lemma}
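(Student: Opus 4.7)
The plan is to bound the pointwise difference of the two integrands and then integrate the resulting pointwise bound. Writing $\rho:=(h^2+(\alpha')^2)^{1/2}$, I first note that on the domain of integration $B^{pol}_{4\lambda}(r,\alpha)-(r,\alpha)$ one has $\rho\leq C\lambda$: this follows from the identity $|F(r+h,\alpha'+\alpha)-F(r,\alpha)|^2=h^2+2r(r+h)(1-\cos\alpha')$ recorded in Subsection~2.1, together with the fact that $r$ is within $2\lambda\leq 1/50$ of $1$ (by the hypothesis $(r,\alpha)\in B_{2\lambda}(\cos c,\sin c)$). In particular $|h|,|\alpha'|\leq C\lambda$, both denominators $D:=h^2+2r(r+h)(1-\cos\alpha')$ and $\tilde D:=h^2+r^2(\alpha')^2$ are comparable to $\rho^2$, and Taylor expansions in $h$ and $\alpha'$ have controlled remainders.

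For the radial estimate I set $s=(3+\gamma)/2$, $K_r:=(r+h)^2\sin\alpha'$, $\tilde K_r:=r^2\alpha'$. Using $\sin\alpha'=\alpha'+O((\alpha')^3)$ and $1-\cos\alpha'=(\alpha')^2/2+O((\alpha')^4)$, an elementary expansion gives $|K_r-\tilde K_r|\leq C\rho^2$ (the leading contribution being $2rh\alpha'$) and $|D-\tilde D|\leq C\rho^3$ (the leading contribution being $rh(\alpha')^2$). The key pointwise bound is then
\begin{equation*}
\Bigl|\frac{K_r}{D^s}-\frac{\tilde K_r}{\tilde D^s}\Bigr|\leq \frac{|K_r-\tilde K_r|}{D^s}+|\tilde K_r|\cdot\Bigl|\frac{1}{D^s}-\frac{1}{\tilde D^s}\Bigr|\leq C\rho^{-1-\gamma},
\end{equation*}
where the first summand is directly $C\rho^2/\rho^{3+\gamma}=C\rho^{-1-\gamma}$, and for the second I apply the mean value theorem to $t\mapsto t^{-s}$ to get $|D^{-s}-\tilde D^{-s}|\leq C|D-\tilde D|\rho^{-2(s+1)}\leq C\rho^{-2-\gamma}$, combined with $|\tilde K_r|\leq C\rho$. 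Plugging in the crude bound $|w^{pol}(r+h,\alpha'+\alpha)-w^{pol}(r,\alpha)|\leq 2\|w\|_{L^\infty}$ and passing to polar coordinates in $(h,\alpha')$,
\begin{equation*}
\|w\|_{L^\infty}\int_{\rho\leq C\lambda}\rho^{-1-\gamma}\,dh\,d\alpha'\leq C\|w\|_{L^\infty}\int_0^{C\lambda}\rho^{-\gamma}\,d\rho\leq C\|w\|_{L^\infty}\lambda^{1-\gamma},
\end{equation*}
which is the claimed inequality. The integrability at the origin uses precisely $\gamma<1$.

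The angular estimate is handled identically once one records the algebraic expansion $\tilde K_\alpha:=(r+h)(r-(r+h)\cos\alpha')=-rh-h^2+(r+h)^2(\alpha')^2/2+O((\alpha')^4)$, so that $|\tilde K_\alpha-(-rh)|\leq C\rho^2$. The same two-term decomposition and the same estimate on $|D^{-s}-\tilde D^{-s}|$, together with $|rh|\leq C\rho$ for the second summand, produce the same pointwise bound $C\rho^{-1-\gamma}$ and hence the same integrated bound $C\|w\|_{L^\infty}\lambda^{1-\gamma}$. I do not anticipate any real obstacle: the entire content of the lemma is that the difference of the kernels is one order of $\rho$ less singular than either kernel taken alone, which is exactly what lets me trade the $C^1$ control of $w$ (which would otherwise be needed just to make either integral converge) for the much cheaper $L^\infty$ bound.
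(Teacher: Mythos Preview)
Your proof is correct and follows essentially the same strategy as the paper: bound the pointwise difference of the exact and approximate kernels by $C\rho^{-1-\gamma}$ (one order less singular than each kernel alone) and integrate against $\|w\|_{L^\infty}$ over the ball of radius $C\lambda$. The only cosmetic difference is packaging---the paper replaces the pieces of the kernel one at a time (``successive approximations,'' e.g.\ first $\sin\alpha'\to\alpha'$, then $(r+h)^2\to r^2$, then the denominator), whereas you do it in a single numerator/denominator split; the underlying estimates are identical.
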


\begin{remark}
The result can be extended to functions with support concentrated around a point $(r,\alpha)$ with $r\neq 0$, although then the constant will depend on the specific value of $r$.
\end{remark}

\begin{proof}
This result is very similar to  lemma 2.1 in \cite{Zoroacordoba}, and the proof is analogous. We just need to take successive approximations of the kernel and bound the error produced by each such approximation. For example, for $(r,\alpha)\in B_{2\lambda}(\cos(c),\sin(c))$ we have that

$$|\int_{B^{pol}_{4\lambda}(r,\alpha)-(r,\alpha)}(r+h)^2\frac{\sin(\alpha')-\alpha'}{|h^2+2r(r+h)(1-\cos(\alpha'))|^{(3+\gamma)/2}}(w^{pol}(r+h,\alpha'+\alpha)-w^{pol}(r,\alpha))d\alpha'dh|$$

$$\leq |\int_{B^{pol}_{4\lambda}(r,\alpha)-(r,\alpha)}(r+h)^2\frac{|\alpha'|^3(w^{pol}(r+h,\alpha'+\alpha)-w^{pol}(r,\alpha))}{|h^2+2r(r+h)(1-\cos(\alpha'))|^{(3+\gamma)/2}}d\alpha'dh|\leq C \lambda^{2-\gamma}||w||_{L^{\infty}}$$

and thus we can substitute the $\sin(\alpha'-\alpha)$ by $\alpha'-\alpha$ with an error small enough for our bounds. Repeating this process for other parts of the kernel yields the desired result.
\end{proof}

\begin{lemma}\label{aproxfuncion}
Given a natural number $N$, $\frac12>\delta>0$ fulfilling $N^{-\delta}\leq \frac{1}{100}$ and $N^{-1+\delta}<\frac{1}{100}$, a function $f_{N,\delta}(x)$ with $supp(f_{N,\delta})\subset B_{N^{-1+\delta}}(\cos(c_{1}),\sin(c_{1}))$ ($c_{1}\in\mathds{R}$), $||f_{N,\delta}^{pol}||_{C^j}\leq MN^{j(1-\delta)}$ for $j=0,1,2$ and $1>\gamma>0,$ then if $w_{N,\delta}^{pol}(r,\alpha):=f_{N,\delta}^{pol}(r,\alpha)\cos(N\alpha+c_{2})$ ($c_{2}\in\mathds{R}$) we have that for $(r,\alpha)\in B^{pol}_{2N^{-1+\delta}}(1,c_{1})$

\begin{align*}
    &\bigg|\int_{B^{pol}_{4N^{-1+\delta}}(r,\alpha)-(r,\alpha)}\frac{r^2\alpha'(w^{pol}(r+h,\alpha'+\alpha)-w^{pol}(r,\alpha))}{|h^2+r^2(\alpha')^2|^{(3+\gamma)/2}}d\alpha'dh\\
    &-f_{N,\delta}^{pol}(r,\alpha)\int_{B^{pol}_{4N^{-1+\delta}}(r,\alpha)-(r,\alpha)}\frac{r^2\alpha'(\cos(N(\alpha'+\alpha)+c_{2})-\cos(N\alpha+c_{2}))}{|h^2+r^2(\alpha')^2|^{(3+\gamma)/2}}d\alpha'dh\bigg|\leq CMN^{\gamma-\delta} \\
\end{align*}

\begin{align}\label{1ºvalpha}
    &\bigg|\int_{B^{pol}_{4N^{-1+\delta}}(r,\alpha)-(r,\alpha)}\frac{rh(w^{pol}(r+h,\alpha'+\alpha)-w^{pol}(r,\alpha))}{|h^2+r^2(\alpha')^2|^{(3+\gamma)/2}}d\alpha'dh\\\nonumber
    &-f_{N,\delta}^{pol}(r,\alpha)\int_{B^{pol}_{4N^{-1+\delta}}(r,\alpha)-(r,\alpha)}\frac{rh(\cos(N(\alpha'+\alpha)+c_{2})-\cos(N\alpha+c_{2}))}{|h^2+r^2(\alpha')^2|^{(3+\gamma)/2}}d\alpha'dh\bigg|\leq CMN^{\gamma-\delta} \\\nonumber
\end{align}
with $C$ depending on $\gamma$ and $\delta$.
\end{lemma}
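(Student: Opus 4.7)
The plan is to use the product-rule decomposition
\begin{align*}
w^{pol}(r+h,\alpha'+\alpha) - w^{pol}(r,\alpha) &= f_{N,\delta}^{pol}(r,\alpha)\bigl[\cos(N(\alpha'+\alpha)+c_{2})-\cos(N\alpha+c_{2})\bigr] \\
&\quad + \bigl[f_{N,\delta}^{pol}(r+h,\alpha'+\alpha)-f_{N,\delta}^{pol}(r,\alpha)\bigr]\cos(N(\alpha'+\alpha)+c_{2})
\end{align*}
to separate the ``main'' term from an error. The first summand, integrated against either of the kernels $\frac{r^2\alpha'}{\rho^{3+\gamma}}$ or $\frac{rh}{\rho^{3+\gamma}}$ (with $\rho:=|h^2+r^2(\alpha')^2|^{1/2}$), produces exactly the expression that the lemma subtracts on the left-hand side. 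So the task reduces to bounding each of the two ``error'' integrals
$$E(r,\alpha):=\int_{B^{pol}_{4N^{-1+\delta}}(r,\alpha)-(r,\alpha)} \frac{K(h,\alpha')\bigl[f_{N,\delta}^{pol}(r+h,\alpha'+\alpha)-f_{N,\delta}^{pol}(r,\alpha)\bigr]\cos(N(\alpha'+\alpha)+c_{2})}{|h^2+r^2(\alpha')^2|^{(3+\gamma)/2}}\,dh\,d\alpha',$$
with $K=r^2\alpha'$ or $K=rh$, by $CMN^{\gamma-\delta}$.

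To estimate $E$ I would split the domain of integration at the oscillation scale $\rho=N^{-1}$. On the inner disk $\rho\leq N^{-1}$, the $C^1$ bound $|f_{N,\delta}^{pol}(r+h,\alpha'+\alpha)-f_{N,\delta}^{pol}(r,\alpha)|\leq ||f_{N,\delta}^{pol}||_{C^1}\rho\leq MN^{1-\delta}\rho$, together with $|K|\leq \rho$ and the kernel size $\rho^{-3-\gamma}$, gives
$$\Bigl|\int_{\rho\leq N^{-1}}\cdots\Bigr|\;\leq\; C\,MN^{1-\delta}\int_{0}^{N^{-1}}\rho^{-\gamma}\,d\rho\;\leq\; C\,MN^{\gamma-\delta}.$$
On the outer annulus $N^{-1}\leq \rho\leq 4N^{-1+\delta}$, the bare size estimate only yields $CMN^{\gamma(1-\delta)}$, which is strictly too large; so I would exploit the rapid oscillation of $\cos(N(\alpha'+\alpha)+c_{2})$ by integrating by parts in $\alpha'$, writing $\cos(N(\alpha'+\alpha)+c_{2})=\tfrac{1}{N}\partial_{\alpha'}\sin(N(\alpha'+\alpha)+c_{2})$. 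The interior integral gains a factor $N^{-1}$, and since the $\alpha'$-derivative of the rest costs at most one extra power of $\rho^{-1}$ (and picks up $\partial_{\alpha} f_{N,\delta}^{pol}$, still bounded by $MN^{1-\delta}$), the new integrand is controlled by $CMN^{-\delta}\rho^{-2-\gamma}$, which integrates over the annulus to another $CMN^{\gamma-\delta}$. The IBP boundary term at $\rho=N^{-1}$ carries the same order (a size $CMN^{2-\delta+\gamma}$ integrand at scale $N^{-1}$, times $N^{-1}$ from the IBP and $N^{-1}$ from the $h$-length of the inner boundary), and the outer boundary at $\rho=4N^{-1+\delta}$ is strictly smaller because $f_{N,\delta}^{pol}$ vanishes there by the support hypothesis.

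The main obstacle is precisely the sharpness of the exponent: a purely Taylor-based estimate cannot do better than $CMN^{\gamma(1-\delta)}$, so closing the gap to $CMN^{\gamma-\delta}$ really requires using the oscillation of the cosine alongside the $C^1$ control of $f_{N,\delta}^{pol}$. Morally this is a commutator bound of the form $||[f_{N,\delta}^{pol},\Lambda^{\gamma}]\cos(N\,\cdot)||_{L^{\infty}}\leq C||f_{N,\delta}^{pol}||_{C^1}N^{\gamma-1}$, and its delicate point is the bookkeeping of the IBP boundary terms at the inner scale $\rho=N^{-1}$. The second inequality, whose kernel $\frac{rh}{\rho^{3+\gamma}}$ is odd in $h$ rather than $\alpha'$, is treated by the very same scheme: the oscillation lives in $\alpha'$, so one integrates by parts once more in $\alpha'$, and all the estimates reproduce verbatim.
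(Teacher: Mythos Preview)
Your decomposition and the inner-disk estimate match the paper exactly. On the outer region the paper takes a different but closely related route: instead of a single integration by parts in $\alpha'$, it expands $\cos(N(\alpha'+\alpha)+c_2)$ via the addition formula, then on each full period $[i\tfrac{2\pi r}{N},(i+1)\tfrac{2\pi r}{N}]$ uses a second-order Taylor expansion of the smooth factor to gain $N^{-2}$ (this is where the hypothesis $\|f_{N,\delta}^{pol}\|_{C^2}\le MN^{2(1-\delta)}$ enters), sums over $i$, and finally integrates in the radial variable. The residual strip of width $\tfrac{2\pi r}{N}$ near the outer edge plays the role of your IBP boundary term. Your single IBP is slightly more economical in that it only uses the $C^1$ bound, whereas the paper's period-summation avoids having to track IBP boundary contributions explicitly; both give $CMN^{\gamma-\delta}$.

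One small correction: your claim that the outer boundary term at $\rho=4N^{-1+\delta}$ vanishes ``because $f_{N,\delta}^{pol}$ vanishes there'' is not quite right. What vanishes is $f_{N,\delta}^{pol}(r+h,\alpha'+\alpha)$; the difference $f_{N,\delta}^{pol}(r+h,\alpha'+\alpha)-f_{N,\delta}^{pol}(r,\alpha)$ equals $-f_{N,\delta}^{pol}(r,\alpha)$ there, which can be of size $M$. Fortunately the term is still harmless: with $\rho\approx N^{-1+\delta}$ one gets a boundary contribution of order $N^{-1}\cdot M\rho^{-2-\gamma}$ integrated over an $h$-interval of length $\sim N^{-1+\delta}$, i.e.\ $CMN^{\gamma-\delta-\gamma\delta}\le CMN^{\gamma-\delta}$. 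So the argument closes, but for the reason that the kernel is small at that scale, not because the integrand vanishes.
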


\begin{proof}
We will just consider the case $c_{1},c_{2}=0$ for simplicity, and we will focus on obtaining (\ref{1ºvalpha}), the other inequality being analogous.
We need to find bounds for

$$\bigg|\int_{B^{pol}_{4N^{-1+\delta}}(r,\alpha)-(r,\alpha)}\frac{rh(w_{N,\delta}^{pol}(r+h,\alpha'+\alpha)-f_{N,\delta}^{pol}(r,\alpha)\cos(N(\alpha'+\alpha)))}{|h^2+r^2(\alpha')^2|^{(3+\gamma)/2}}d\alpha'dh\bigg|$$
$$=\bigg|\int_{-4N^{-1+\delta}}^{4N^{-1+\delta}}\int_{-S(h)}^{S(h)}\frac{rh(f_{N,\delta}^{pol}(r+h,\alpha'+\alpha)-f_{N,\delta}^{pol}(r,\alpha))\cos(N(\alpha'+\alpha))}{|h^2+r^2(\alpha')^2|^{(3+\gamma)/2}}d\alpha'dh\bigg|$$
$$=\bigg|\int_{-4N^{-1+\delta}}^{4N^{-1+\delta}}\int_{-rS(s_{2})}^{rS(s_{2})}\frac{s_{2}(f_{N,\delta}^{pol}(r+s_{2},\frac{s_{1}}{r}+\alpha)-f_{N,\delta}^{pol}(r,\alpha))\cos(N(\frac{s_{1}}{r}+\alpha))}{|s|^{3+\gamma}}ds_{1}ds_{2}\bigg|$$

where 
we used the change of variables $s_{1}=r(\alpha'-\alpha)$, $h=s_{2}$ and we define $|s|:=|s_{1}^2+s_{2}^2|^{\frac12}$. Furthermore, 

$$\int_{-4N^{-1+\delta}}^{4N^{-1+\delta}}\int_{-rS(s_{2})}^{rS(s_{2})}\frac{s_{2}\cos(N(\frac{s_{1}}{r}+\alpha))(f_{N,\delta}^{pol}(r+s_{2},\frac{s_{1}}{r}+\alpha)-f_{N,\delta}^{pol}(r,\alpha))}{|s|^{3+\gamma}}ds_{1}ds_{2}$$

$$=\cos(N\alpha)\int_{-4N^{-1+\delta}}^{4N^{-1+\delta}}\int_{-rS(s_{2})}^{rS(s_{2})}\frac{s_{2}\cos(\frac{N}{r}s_{1})(f_{N,\delta}^{pol}(r+s_{2},\frac{s_{1}}{r}+\alpha)-f_{N,\delta}^{pol}(r,\alpha))}{|s|^{3+\gamma}}ds_{1}ds_{2}$$
$$-\sin(N\alpha)\int_{-4N^{-1+\delta}}^{4N^{-1+\delta}}\int_{-rS(s_{2})}^{rS(s_{2})}\frac{s_{2}\sin(\frac{N}{r}s_{1})(f_{N,\delta}^{pol}(r+s_{2},\frac{s_{1}}{r}+\alpha)-f_{N,\delta}^{pol}(r,\alpha))}{|s|^{3+\gamma}}ds_{1}ds_{2}.$$

We will only check the term that is multiplied by $\cos(N\alpha)$, the other term being analogous. We start with the contribution when $(s_{1},s_{2})\in \mathcal{A}:=\{|s_{j}|\leq  \frac{4\pi r}{N}$ with $j=1,2$\}, which gives us

$$|\int_{\mathcal{A}}\frac{s_{2}\cos(\frac{N}{r}s_{1})(f_{N,\delta}^{pol}(r+s_{2},\frac{s_{1}}{r}+\alpha)-f_{N,\delta}^{pol}(r,\alpha))}{|s|^{3+\gamma}}ds_{1}ds_{2}|$$
$$\leq CMN^{\gamma-\delta}.$$

Next we consider the integral in
$$\mathcal{B}:= \{ (s_{1},s_{2}):(s_{1},s_{2})\in B^{pol}_{4N^{-1+\delta}}(r,\alpha)-(r,\alpha),|s_{1}|\leq  \lfloor\frac{S(s_{2})N}{2\pi}\rfloor \frac{2\pi r}{N}\}\setminus \mathcal{A},$$ with $\lfloor \cdot \rfloor$ the integer part.


We will focus on the contribution when $(s_{1},s_{2})\in \mathcal{B}\cap (s_{1}\geq \frac{4\pi r}{N},s_{2}\geq 0)$, since the other parts of the integral are bounded analogously. We start by computing the integral with respect to $s_{1}$.

For this we first note that, for an integer  $i$, given a $C^{2}$ function $g(x)$ and a real number $\frac{N}{r}>0$ we have

$$|\int_{i \frac{2\pi r}{N}}^{(i+1)\frac{2\pi r}{N}} \cos(\frac{N}{r}x)g(x)dx|\leq  (\frac{\pi r}{N})^3 (sup_{x\in(i\frac{2\pi r}{N},(i+1)\frac{2\pi r}{N})} |g''(x)|) $$
where $g''(x)$ is the second derivative of $g(x)$. This bound is obtained simply by considering a second order Taylor expansion around the middle point of the interval and noting that the constant and linear terms vanish. Therefore, if $i\geq 2$, $s_{2}>0$

\begin{align*}
&\bigg|\int_{i\frac{2\pi r}{N}}^{(i+1)\frac{2\pi r}{N}}\frac{\cos(\frac{N}{r}s_{1})(f_{N,\delta}^{pol}(r+s_{2},\frac{s_{1}}{r}+\alpha)-f_{N,\delta}^{pol}(r,\alpha))}{|s|^{3+\gamma}}ds_{1}\bigg|\\
&\leq (\frac{2\pi r}{N})^3 (sup_{s_{1}\in(i\frac{2\pi r}{N},(i+1)\frac{2\pi r}{N})}\bigg|\frac{d^2}{d s_{1}^2} \frac{f_{N,\delta}^{pol}(r+s_{2},\frac{s_{1}}{r}+\alpha)-f_{N,\delta}^{pol}(r,\alpha)}{|s|^{3+\gamma}}\bigg|)\\
&\leq CM(\frac{2\pi r}{N})^3\frac{1}{((\frac{i2\pi r}{N})^2+s_{2}^2)^{\frac{3+\gamma}{2}}}\Big(N^{2-2\delta}+\frac{N^{1-\delta}}{((\frac{i2\pi r}{N})^2+s_{2}^2)^{\frac12}}+\frac{N^{1-\delta}[(i+1)\frac{2\pi }{N}+s_{2}]}{((\frac{i2\pi r}{N})^2+s_{2}^2)}\Big)\\
&\leq CM(\frac{2\pi r}{N})^3\frac{1}{(\frac{i2\pi r}{N }+s_{2})^{3+\gamma}}\Big(N^{2-2\delta}+\frac{N^{1-\delta}}{(\frac{i2\pi r}{N}+s_{2})}\Big).\\
\end{align*}

Adding over all the relevant values of $i$ we get

\begin{align*}
  &\sum_{i=2}^{\lfloor \frac{S(s_{2})N}{2\pi}\rfloor}CM(\frac{2\pi r}{N})^3\frac{1}{(\frac{i2\pi r}{N}+s_{2})^{3+\gamma}}\Big(N^{2-2\delta}+\frac{N^{1-\delta}}{\frac{i2\pi r}{N}+s_{2}}\Big)\\
  &\leq \int_{1}^{\infty}CM(\frac{2\pi r}{N})^3\frac{1}{(\frac{x2\pi r}{N}+s_{2})^{3+\gamma}}\Big(N^{2-2\delta}+\frac{N^{1-\delta}}{\frac{x2\pi r}{N}+s_{2}}\Big)dx\\
  &\leq \frac{CM}{N^{2\delta}(\frac{2\pi r}{N}+s_{2})^{2+\gamma}}+\frac{CM}{N^{1+\delta}(\frac{2\pi r}{N}+s_{2})^{3+\gamma}},\\
\end{align*}

and multiplying by $s_{2}$ and integrating with respect to $s_{2}$ we obtain 

$$\int_{0}^{4N^{-1+\delta}}s_{2} (\frac{CM}{N^{2\delta}(\frac{2\pi r}{N}+s_{2})^{2+\gamma}}+\frac{CM}{N^{1+\delta}(\frac{2\pi r}{N}+s_{2})^{3+\gamma}})ds_{2}\leq CMN^{\gamma-\delta}.$$

Finally, we need to bound the integral when $(s_{1},s_{2})\in \mathcal{C}:=B_{4N^{-1+\delta}}(r,\alpha)-(r,\alpha)\setminus(\mathcal{A}\cup \mathcal{B})$. For this we only need to use that in this set $|s|\geq 3N^{-1+\delta}$ and that

$$|\int_{[-rS(r),rS(r)]\setminus [-\lfloor\frac{S(r)N}{2\pi}\rfloor \frac{2\pi r}{N},\lfloor\frac{S(r)N}{2\pi}\rfloor \frac{2\pi r}{N}]}ds_{1}|\leq 2\frac{2\pi r}{N}$$

so therefore

$$|\int_{\mathcal{C}}\frac{s_{2}\cos(\frac{N}{r}s_{1})(f_{N,\delta}^{pol}(r+s_{2},\frac{s_{1}}{r}+\alpha)-f_{N,\delta}^{pol}(r,\alpha))}{|s|^{3+\gamma}}ds_{1}ds_{2}|$$

$$\leq |\int_{-4N^{-1+\delta}}^{4N^{-1+\delta}}\frac{C |s_{2}|M }{N|N^{-1+\delta}|^{3+\gamma}}ds_{2}|\leq CMN^{\gamma-\delta-\delta\gamma}.$$




\end{proof}

\begin{lemma}\label{errorvsmall}
Given $\frac12>\delta>0$ and $1>\gamma>0,$, for any natural number $N$  fulfilling $N^{-\delta}\leq \frac{1}{100}$ and $N^{-1+\delta}<\frac{1}{100}$, a function $f_{N,\delta}(x)$ with $supp(f_{N,\delta})\subset B_{N^{-1+\delta}}(\cos(c_{1}),\sin(c_{1}))$ ($c_{1}\in\mathds{R}$), $||f_{N,\delta}^{pol}||_{C^j}\leq MN^{j(1-\delta)}$ for $j=0,1,2$  then we have that if $w_{N,\delta}^{pol}(r,\alpha):=f_{N,\delta}^{pol}(r,\alpha)\cos(N\alpha+c_{2})$ ($c_{2}\in\mathds{R}$) there exist constants $C,C_{\gamma}$ such that for $(r,\alpha)\in B_{2N^{-1+\delta}}(1,c_{1})$


$$|v^{pol}_{r,\gamma}(w_{N,\delta})(r,\alpha)-N^{\gamma} f_{N,\delta}^{pol}(r,\alpha)C_{\gamma} \sin (N\alpha+c_{2})|\leq CMN^{\gamma-\delta},$$

$$|v^{pol}_{\alpha,\gamma}(w_{N,\delta})(r,\alpha)|\leq CMN^{\gamma-\delta},$$
with $C_{\gamma}\neq 0$ depending on $\gamma$ and $C$ depending on $\gamma$ and $\delta$.

\end{lemma}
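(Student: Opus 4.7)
The plan is to chain Lemmas \ref{aproxkernel} and \ref{aproxfuncion} so as to replace each component of the velocity by an oscillatory integral with a constant-coefficient kernel, and then to evaluate the latter by a rescaling that turns the high-frequency oscillation $\cos(N\alpha+c_{2})$ into a unit-frequency one. Throughout, write $\lambda:=N^{-1+\delta}$ and use $||w_{N,\delta}||_{L^{\infty}}\leq M$.

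First, Lemma \ref{aproxkernel} lets us replace $v^{pol}_{r,\gamma}(w_{N,\delta})(r,\alpha)$ by
$$\int_{B^{pol}_{4\lambda}(r,\alpha)-(r,\alpha)}\frac{r^{2}\alpha'\bigl(w^{pol}_{N,\delta}(r+h,\alpha+\alpha')-w^{pol}_{N,\delta}(r,\alpha)\bigr)}{|h^{2}+r^{2}(\alpha')^{2}|^{(3+\gamma)/2}}\,d\alpha'\,dh,$$
and the analogous expression for $v^{pol}_{\alpha,\gamma}$ with $-rh$ in place of $r^{2}\alpha'$, at a cost $CM\lambda^{1-\gamma}=CMN^{(1-\gamma)(-1+\delta)}$; this is dominated by $CMN^{\gamma-\delta}$ because $\delta<1/2$. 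A subsequent application of Lemma \ref{aproxfuncion} factors $f^{pol}_{N,\delta}(r,\alpha)$ outside each integral, with an additional error $CMN^{\gamma-\delta}$.

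It then remains to analyse, for the radial component,
$$I_{r}(r,\alpha):=\int_{B^{pol}_{4\lambda}(r,\alpha)-(r,\alpha)}\frac{r^{2}\alpha'\bigl(\cos(N(\alpha+\alpha')+c_{2})-\cos(N\alpha+c_{2})\bigr)}{|h^{2}+r^{2}(\alpha')^{2}|^{(3+\gamma)/2}}\,d\alpha'\,dh,$$
and the analogous $I_{\alpha}$. Changing variables first to $s_{1}=r\alpha'$, $s_{2}=h$, and then to $(u,v)=(N/r)(s_{1},s_{2})$, the Jacobians telescope and one obtains
$$I_{r}=(N/r)^{\gamma}\int_{D_{N}}\frac{u\bigl[\cos(N\alpha+c_{2})(\cos u-1)-\sin(N\alpha+c_{2})\sin u\bigr]}{|(u,v)|^{3+\gamma}}\,du\,dv,$$
where $D_{N}$ is the image of the polar ball under the two scalings, a region comparable to a disc of radius of order $N^{\delta}$. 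The summand $u(\cos u-1)$ is odd in $u$ while the polar ball is exactly symmetric in $\alpha'$ at each fixed $h$, so this piece vanishes identically. The surviving $u\sin u$ integrand is even in both variables, so we extend the integration to $\mathds{R}^{2}$ at the cost of a tail error which, after passage to polar coordinates in $(u,v)$, is an oscillatory Bessel-type integral of size $O(N^{-\delta(3/2+\gamma)})$; multiplied by $N^{\gamma}$ this stays within the budget $N^{\gamma-\delta}$. The full-plane integral produces the constant $C_{\gamma}:=-\int_{\mathds{R}^{2}}u\sin u\,|(u,v)|^{-3-\gamma}\,du\,dv$, which is non-vanishing because $\int_{\mathds{R}^{2}}\frac{u\sin u}{|(u,v)|^{3+\gamma}}\,du\,dv=2\pi\int_{0}^{\infty}\rho^{-1-\gamma}J_{1}(\rho)\,d\rho>0$ by a standard Mellin--Bessel identity. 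Finally, since $|r-1|\leq 2\lambda$, replacing $(N/r)^{\gamma}$ by $N^{\gamma}$ costs only $O(MN^{\gamma-1+\delta})$, absorbed by $O(MN^{\gamma-\delta})$ because $2\delta<1$, producing the stated principal part $N^{\gamma}f^{pol}_{N,\delta}(r,\alpha)C_{\gamma}\sin(N\alpha+c_{2})$.

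For the angular component, the same reductions lead to an integral in which the scaled integrand carries the prefactor $v$ instead of $u$, so both the $v(\cos u-1)$ and $v\sin u$ pieces are odd in $v$. The domain $D_{N}$ is not exactly symmetric in $v$; however the explicit asymmetry $|P_{+}(\alpha')+P_{-}(\alpha')|\leq C\lambda^{2}$ from Section \ref{polarcoordinates} translates in the scaled variables into $O(N^{-1+2\delta})$, and combining this with the integrand's magnitude near $\partial D_{N}$ and a $u$-range of length $\sim N^{\delta}$ gives an overall contribution of order $MN^{\gamma-\delta}$, hence $|v^{pol}_{\alpha,\gamma}(w_{N,\delta})|\leq CMN^{\gamma-\delta}$. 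The main technical obstacle is precisely this bookkeeping of the near-symmetry of the polar ball in $h$: the same asymmetry that is killed in $I_{r}$ by the exact symmetry in $\alpha'$ is the sole source of the leading error in $I_{\alpha}$, and must be shown, together with the oscillatory tail estimate for $I_{r}$, to fit inside the budget $N^{\gamma-\delta}$.
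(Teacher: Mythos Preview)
Your overall strategy matches the paper's: both chain Lemmas \ref{aproxkernel} and \ref{aproxfuncion}, exploit the exact $\alpha'$-symmetry of the polar ball to kill the $u(\cos u-1)$ piece of $I_{r}$, use the near-symmetry in $h$ (via $|P_{+}+P_{-}|\leq C\lambda^{2}$) to bound $I_{\alpha}$, extend the surviving $I_{r}$-integral to $\mathds{R}^{2}$, identify the constant, and finally replace $(N/r)^{\gamma}$ by $N^{\gamma}$.

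The one step that is incomplete as written is your tail estimate for $I_{r}$. Passage to polar coordinates in $(u,v)$ produces a Bessel integral \emph{only} if the domain is an exact disc, and $D_{N}$ is not: in the scaled variables it differs from the disc of radius $\approx 4N^{\delta}$ by an annulus of width $O(N^{-1+2\delta})$ (the same $O(\lambda^{2})$ distortion underlying $|P_{+}+P_{-}|\leq C\lambda^{2}$). A crude bound on this thin annulus gives a contribution $O(N^{-1+\delta-\delta\gamma})$ before the factor $N^{\gamma}$, which does fit inside $N^{\gamma-\delta}$ since $\delta(2-\gamma)<1$; so your conclusion survives, but your claimed tail size $O(N^{-\delta(3/2+\gamma)})$ is not justified and is in fact false for $\delta$ close to $1/2$. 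The paper avoids this altogether by never going to polar coordinates: it integrates period-by-period in $h_{1}$ at each fixed $h_{2}$, bounding $\bigl|\int_{i\cdot 2\pi r/N}^{(i+1)\cdot 2\pi r/N}\cdots\,dh_{1}\bigr|$ via a second-order expansion and summing, which works directly on the true (non-circular) domain. Your Mellin--Bessel identification of $C_{\gamma}$ and its sign is a valid alternative to the paper's elementary monotonicity argument for $\int_{0}^{\infty}h_{1}^{-1-\gamma}\sin h_{1}\,dh_{1}>0$.
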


\begin{proof}

Using lemmas \ref{aproxkernel} and \ref{aproxfuncion} yields

\begin{align*}
    &|v^{pol}_{r,\gamma}(w_{N,\delta})(r,\alpha)-f_{N,\delta}^{pol}(r,\alpha)\int_{B^{pol}_{4N^{-1+\delta}}(r,\alpha)-(r,\alpha)}\frac{r^2\alpha'(\cos(N(\alpha'+\alpha)+c_{2})-\cos(N\alpha+c_{2}))}{|h^2+r^2(\alpha')^2|^{(3+\gamma)/2}}d\alpha'dh|\\
    &\leq CMN^{\gamma-\delta},\\
\end{align*}

\begin{align*}
    &|v^{pol}_{\alpha,\gamma}(w_{N,\delta})(r,\alpha)-f_{N,\delta}^{pol}(r,\alpha)\int_{B^{pol}_{4N^{-1+\delta}}(r,\alpha)-(r,\alpha)}\frac{rh(\cos(N(\alpha'+\alpha)+c_{2})-\cos(N\alpha+c_{2}))}{|h^2+r^2(\alpha')^2|^{(3+\gamma)/2}}d\alpha'dh|\\
    &\leq CMN^{\gamma-\delta},\\
\end{align*}
and therefore it is enough to prove

\begin{align}\label{vrlemmafinal}
    &|f_{N,\delta}^{pol}(r,\alpha)\int_{B^{pol}_{4N^{-1+\delta}}(r,\alpha)-(r,\alpha)}\frac{r^2\alpha'(\cos(N(\alpha'+\alpha)+c_{2})-\cos(N\alpha+c_{2}))}{|h^2+r^2(\alpha')^2|^{(3+\gamma)/2}}d\alpha'dh\\\nonumber
    &-N^{\gamma} C_{\gamma} \sin (N\alpha+c_{2})|\leq CMN^{\gamma-\delta},\\\nonumber
\end{align}

\begin{align}\label{valphalemmafinal}
    &|f_{N,\delta}^{pol}(r,\alpha)\int_{B^{pol}_{4N^{-1+\delta}}(r,\alpha)-(r,\alpha)}\frac{rh(\cos(N(\alpha'+\alpha)+c_{2})-\cos(N\alpha+c_{2}))}{|h^2+r^2(\alpha')^2|^{(3+\gamma)/2}}d\alpha'dh|\\\nonumber
    &\leq CMN^{\gamma-\delta}.\\\nonumber
\end{align}

We start with (\ref{valphalemmafinal}), where by using the odd symmetry of the integrand with respect to $h$

\begin{align*}
    &|f_{N,\delta}^{pol}(r,\alpha)\int_{B^{pol}_{4N^{-1+\delta}}(r,\alpha)-(r,\alpha)}\frac{rh(\cos(N(\alpha'+\alpha)+c_{2})-\cos(N\alpha+c_{2}))}{|h^2+r^2(\alpha')^2|^{(3+\gamma)/2}}d\alpha'dh|\\
    &=|f_{N,\delta}^{pol}(r,\alpha)\int_{-S_{\infty}}^{S_{\infty}}\int_{P_{-}(\alpha')}^{P_{+}(\alpha')}\frac{rh(\cos(N(\alpha'+\alpha)+c_{2})-\cos(N\alpha+c_{2}))}{|h^2+r^2(\alpha')^2|^{(3+\gamma)/2}}dhd\alpha'|\\
    &=|f_{N,\delta}^{pol}(r,\alpha)\int_{-S_{\infty}}^{S_{\infty}}\int_{P_{-}(\alpha')}^{-P_{+}(\alpha')}\frac{rh(\cos(N(\alpha'+\alpha)+c_{2})-\cos(N\alpha+c_{2}))}{|h^2+r^2(\alpha')^2|^{(3+\gamma)/2}}dhd\alpha'|\\
    &\leq |M\int_{-S_{\infty}}^{S_{\infty}}\frac{CN^{-2+2\delta}}{N^{(-1+\delta)(2+\gamma)}}d\alpha'|\leq CM N^{(-1+\delta)(1-\gamma)}\leq CMN^{\gamma-\delta}\\
\end{align*}


where we used that $|P_{+}(\alpha')+P_{-}(\alpha')|\leq C N^{-2+2\delta}$,  $|S_{\infty}|\leq arccos(1-16\frac{N^{-2+2\delta}}{r^2})\leq C N^{-1+\delta} $ and that, for $h\in [P_{-}(\alpha'),-P_{+}(\alpha')]$

$$\frac{1}{|h^2+r^2(\alpha')^2|^{(2+\gamma)/2}}\leq \frac{C}{N^{(-1+\delta)(2+\gamma)}}.$$

For (\ref{vrlemmafinal}) we use
\begin{align*}
    &\int_{B^{pol}_{4N^{-1+\delta}}(r,\alpha)-(r,\alpha)}\frac{r^2\alpha'(\cos(N(\alpha'+\alpha)+c_{2})-\cos(N\alpha+c_{2}))}{|h^2+r^2(\alpha')^2|^{(3+\gamma)/2}}d\alpha'dh\\
    &=-\sin(N\alpha+c_{2})\int_{-4N^{-1+\delta}}^{4N^{-1+\delta}}\int_{-rS(h_{2})}^{rS(h_{2})}\frac{h_{1}\sin(N\frac{h_{1}}{r})}{|h_{1}^2+h_{2}^2|^{(3+\gamma)/2}}dh_{1}dh_{2}\\
    &=-\sin(N\alpha+c_{2})\int_{\mathds{R}}\int_{\mathds{R}}\frac{h_{1}\sin(N\frac{h_{1}}{r})}{|h_{1}^2+h_{2}^2|^{(3+\gamma)/2}}dh_{1}dh_{2}\\
    &+4\sin(N\alpha+c_{2})\int_{0}^{\infty}\int_{r\tilde{S}(h_{2})}^{\infty}\frac{h_{1}\sin(N\frac{h_{1}}{r})}{|h_{1}^2+h_{2}^2|^{(3+\gamma)/2}}dh_{1}dh_{2}\\
\end{align*}

where we just take 
\begin{equation*}
    \tilde{S}(h)=
    \begin{cases}
      S(h), & \text{if}\ h\in[-4N^{-1+\delta},4N^{-1+\delta}] \\
      0 & \text{otherwise.}
    \end{cases}
  \end{equation*}

But, we have that, for $i$ a natural number,

$$|\int_{i\frac{2\pi r}{N}+r\tilde{S}(h_{2})}^{(i+1)\frac{2\pi r}{N}+r\tilde{S}(h_{2})}\frac{h_{1}\sin(N\frac{h_{1}}{r})}{|h_{1}^2+h_{2}^2|^{(3+\gamma)/2}}dh_{1}|$$

$$\leq \frac{C}{N^2}\frac{1}{|(i\frac{2\pi r}{N}+r\tilde{S}(h_{2}))^2+h_{2}^2|^{(3+\gamma)/2}}$$
and thus 

$$|\int_{r\tilde{S}(h_{2})}^{\infty}\frac{h_{1}\sin(N\frac{h_{1}}{r})}{|h_{1}^2+h_{2}^2|^{(3+\gamma)/2}}dh_{1}|$$

$$\leq \sum_{i=0}^{\infty}\frac{C}{N^2}\frac{1}{|(i\frac{2\pi r}{N}+r\tilde{S}(h_{2}))^2+h_{2}^2|^{(3+\gamma)/2}}$$

$$\leq \int_{-1}^{\infty}\frac{C}{N^2}\frac{1}{|x\frac{2\pi r}{N}+r\tilde{S}(h_{2})+h_{2}|^{(3+\gamma)}}dx$$
$$\leq \frac{C}{N|-\frac{2\pi r}{N}+r\tilde{S}(h_{2})+h_{2}|^{(2+\gamma)}}\leq \frac{C}{N|r\tilde{S}(h_{2})+h_{2}|^{(2+\gamma)}}$$
where we used for $h_{2}>0$, $r\geq\frac12$ we have $r\tilde{S}(h_{2})+h_{2}\geq CN^{-1+\delta}$. But then

$$|4\sin(N\alpha+c_{2})\int_{0}^{N^{-1+\delta}}\int_{r\tilde{S}(h_{2})}^{\infty}\frac{h_{1}\sin(N\frac{h_{1}}{r})}{|h_{1}^2+h_{2}^2|^{(3+\gamma)/2}}dh_{1}dh_{2}|$$
$$\leq  \int_{0}^{N^{-1+\delta}}\frac{C}{N^{1+(-1+\delta)(2+\gamma)}} dh_{2}=CN^{\gamma-\delta-\delta\gamma}$$
and

$$|4\sin(N\alpha+c_{2})\int_{N^{-1+\delta}}^{\infty}\int_{r\tilde{S}(h_{2})}^{\infty}\frac{h_{1}\sin(N\frac{h_{1}}{r})}{|h_{1}^2+h_{2}^2|^{(3+\gamma)/2}}dh_{1}dh_{2}|$$

$$\leq |\int_{N^{-1+\delta}}^{\infty}\frac{C}{N|h_{2}|^{(2+\gamma)}}dh_{2}|\leq CN^{\gamma-\delta-\gamma\delta},$$
and therefore

\begin{align*}
    &|\int_{B^{pol}_{4N^{-1+\delta}}(r,\alpha)-(r,\alpha)}\frac{r^2\alpha'(\cos(N(\alpha'+\alpha)+c_{2})-\cos(N\alpha+c_{2}))}{|h^2+r^2(\alpha')^2|^{(3+\gamma)/2}}d\alpha'dh\\
    &+\sin(N\alpha+c_{2})\int_{\mathds{R}}\int_{\mathds{R}}\frac{h_{1}\sin(N\frac{h_{1}}{r})}{|h_{1}^2+h_{2}^2|^{(3+\gamma)/2}}dh_{1}dh_{2}|\leq CN^{\gamma-\delta-\gamma\delta}\\
\end{align*}
and combined with (\ref{vrlemmafinal}) we get

\begin{align*}
    &|v^{pol}_{\alpha,\gamma}(w_{N,\delta})(r,\alpha)+f_{N,\delta}^{pol}(r,\alpha)\sin(N\alpha+c_{2})\int_{\mathds{R}}\int_{\mathds{R}}\frac{h_{1}\sin(N\frac{h_{1}}{r})}{|h_{1}^2+h_{2}^2|^{(3+\gamma)/2}}dh_{1}dh_{2}|\\
    &\leq CMN^{\gamma-\delta}.\\
\end{align*}


Furthermore

\begin{align*}
    &-\sin(N\alpha+c_{2})\int_{\mathds{R}^2}\frac{h_{1}\sin(\frac{N}{r}h_{1})}{|h^{2}_{1}+h^{2}_{2}|^{\frac{3+\gamma}{2}}}dh_{1}dh_{2}=-\sin(N\alpha+c_{2})\bigg(\frac{N}{r}\bigg)^{\gamma}\int_{\mathds{R}^2}\frac{h_{1}\sin(h_{1})}{|h_{1}^{2}+h_{2}^{2}|^{\frac{3+\gamma}{2}}}dh_{1}dh_{2},\\
\end{align*}
and

\begin{align*}
    &\int_{\mathds{R}^2}\frac{h_{1}\sin(h_{1})}{|h_{1}^{2}+h_{2}^{2}|^{\frac{3+\gamma}{2}}}dh_{1}dh_{2}=\int_{-\infty}^{\infty}h_{1}\sin(h_{1})\int_{-\infty}^{\infty}\frac{1}{(h_{1}^{2}+h_{2}^{2})^{\frac{(3+\gamma)}{2}}}dh_{2}dh_{1}\\
    &=\int_{-\infty}^{\infty}\frac{h_{1}\sin(h_{1})}{|h_{1}|^{2+\gamma}}\int_{-\infty}^{\infty}\frac{1}{(1+\lambda^{2})^{\frac{(3+\gamma)}{2}}}d\lambda dh_{1}=K_{\gamma}2\int_{0}^{\infty}\frac{h_{1}\sin(h_{1})}{|h_{1}|^{2+\gamma}}dh_{1}.\\\
\end{align*}
By using that $\frac{h_{1}}{|h_{1}|^{2+\gamma}}$ is monotone decreasing for $h_{1}>0$, $\sin(x+\pi)=-\sin(x)$, $\sin(x)>0$ if $x\in(0,\pi)$  and $K_{\gamma}>0$ we obtain

$$C_{\gamma}:=-K_{\gamma}2\int_{0}^{\infty}\frac{h_{1}\sin(h_{1})}{|h_{1}|^{2+\gamma}}<0.$$
Thus

$$|v^{pol}_{r,\gamma}(w_{N,\delta})(r,\alpha)-f_{N,\delta}^{pol}(r,\alpha)\bigg(\frac{N}{r}\bigg)^{\gamma} C_{\gamma} \sin (N\alpha+c_{2})|\leq CMN^{\gamma-\delta},$$
and since, for the values of $r$ considered we have 
$$|\bigg(\frac{N}{r}\bigg)^{\gamma}-N^{\gamma}|\leq CN^{\gamma-1+\delta}\leq CN^{\gamma-\delta}$$
we are done.

\end{proof}



\begin{lemma}\label{decaimiento}


Given $0<\delta<\frac{1}{2}$, $0<\gamma<1$, a natural number $N$ such that $N^{-1+\delta}\leq \frac{1}{100}$ and a $C^2$ function $f_{N,\delta}$,  satisfying $supp(f_{N,\delta})\subset B_{N^{-1+\delta}}(\cos(c_{1}),\sin(c_{1}))$ ($c_{1}\in\mathds{R}$) with $||f_{N,\delta}||_{C^{j}}\leq M N^{j(1-\delta)}$, $j=0,1,2$, then for any $x=(x_{1},x_{2})=(R\cos(A),R\sin(A))\in \mathds{R}^2\setminus B_{2N^{-1+\delta}}(\cos(c_{1}),\sin(c_{1}))$ we have that

$$|v^{pol}_{r,\gamma}(f_{N,\delta}(r,\alpha)\sin(N\alpha))(R,A)|\leq C\frac{M}{|d(x,f_{N,\delta})|^{2+\gamma}} N^{-2+\delta},$$

$$|v^{pol}_{\alpha,\gamma}(f_{N,\delta}(r,\alpha)\sin(N\alpha))(R,A)|\leq C\frac{M}{|d(x,f_{N,\delta})|^{2+\gamma}} N^{-2+\delta}$$
with $C$ depending only on $\gamma$.

Furthermore, if $f_{N,\delta}\in C^{k+2}$ for $k$ an integer $k\geq 1$ and $||f_{N,\delta}(r,\alpha)||_{C^{j}}\leq M N^{j(1-\delta)}$ for $j=0,1,...,k$ then we have 

$$|\frac{\partial^{j}v^{pol}_{r,\gamma}(f_{N,\delta}(r,\alpha)\sin(N\alpha))(R,A)}{\partial x_{1}^{l}\partial x_{2}^{j-l}}|\leq C\frac{M}{|d(x,f_{N,\delta})|^{2+\gamma}} N^{-2+\delta+j},$$

$$|\frac{\partial^{j}v^{pol}_{\alpha,\gamma}(f_{N,\delta}(r,\alpha)\sin(N\alpha))(R,A)}{\partial x_{1}^{l}\partial x_{2}^{j-l}}|\leq C\frac{M}{|d(x,f_{N,\delta})|^{2+\gamma}} N^{-2+\delta+j}$$
for $j=0,1,...,k+2$, $l=0,1,...,j$, with $C$ depending on $\gamma$ and $j$.
\end{lemma}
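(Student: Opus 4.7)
The plan is to split the proof into the base case $j=0$ and the derivative case $j\geq 1$, which require different techniques. Since $\|w_{N,\delta}\|_{L^1}\leq CMN^{-2+2\delta}$, the naive $L^1$ bound $C\|w_{N,\delta}\|_{L^1}/d^{2+\gamma}$ on the Cartesian velocity only gives $N^{-2+2\delta}$, short by a factor $N^{-\delta}$; this gain must be extracted from the oscillation $\sin(N\alpha)$ via an integration-by-parts argument. For $j\geq 1$ the lost factor is recovered instead from the hypothesis $d\geq N^{-1+\delta}$, which converts each additional kernel derivative of size $1/|x-y|$ into a factor $N^{1-\delta}$.

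For the base case I would reduce to Cartesian velocity estimates via $v_{r,\gamma}(w_{N,\delta})(x)=\cos A\,v_{1,\gamma}(w_{N,\delta})(x)+\sin A\,v_{2,\gamma}(w_{N,\delta})(x)$ with $x=(R\cos A,R\sin A)$. Expressing each $v_{i,\gamma}(w_{N,\delta})(x)=\int K_i(x,y)\,f_{N,\delta}^{pol}(r,\alpha)\sin(N\alpha)\,r\,dr\,d\alpha$ in polar coordinates and applying $\sin(N\alpha)=-N^{-1}\partial_\alpha\cos(N\alpha)$, one IBP in $\alpha$ (boundary terms vanish by the compact support of $f_{N,\delta}$) produces two contributions. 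The term carrying $\partial_\alpha f_{N,\delta}^{pol}$ is bounded using $|K_i|\leq C/|x-y|^{2+\gamma}$ (from the cancellation $|y_j-x_j|\leq|x-y|$) together with $\|f_{N,\delta}^{pol}\|_{C^1}\leq CMN^{1-\delta}$; the term carrying $\partial_\alpha K_i$ is bounded using the refined kernel estimate $|\partial_\alpha K_i|\leq C/|x-y|^{3+\gamma}$ and then trading one factor of $1/d$ for $N^{1-\delta}$. Both terms become $CMN^{-2+\delta}/d^{2+\gamma}$ after multiplying by the IBP gain $1/N$ and the support area $\sim N^{-2+2\delta}$; the treatment of $v_{\alpha,\gamma}$ is identical.

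For $j\geq 1$, the commutation identity (\ref{dercomr}) gives $|\partial^j v_{r,\gamma}(w_{N,\delta})(x)|\leq |v_{r,\gamma}(\partial^j w_{N,\delta})(x)|+C\sum_{k<j}\bigl(|\partial^k v_{1,\gamma}(w_{N,\delta})(x)|+|\partial^k v_{2,\gamma}(w_{N,\delta})(x)|\bigr)$. Since Cartesian velocities commute with Cartesian derivatives, and since $v_{r,\gamma}(\partial^j w_{N,\delta})(x)=\cos A\,\partial^j v_{1,\gamma}(w_{N,\delta})(x)+\sin A\,\partial^j v_{2,\gamma}(w_{N,\delta})(x)$, every contribution reduces to bounding $\partial_x^k v_{i,\gamma}(w_{N,\delta})(x)=\int \partial_x^k K_i(x,y)\,w_{N,\delta}(y)\,dy$ for $k=0,\ldots,j$. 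The $k=0$ case is the base case. For $k\geq 1$ the standard estimate $|\partial_x^k K_i|\leq C/|x-y|^{2+\gamma+k}$ combined with $\|w_{N,\delta}\|_{L^1}\leq CMN^{-2+2\delta}$ and $1/d^k\leq N^{k(1-\delta)}$ yields $CMN^{-2+2\delta+k-k\delta}/d^{2+\gamma}\leq CMN^{-2+\delta+k}/d^{2+\gamma}$, precisely because $\delta(k-1)\geq 0$ for $k\geq 1$. Summing all contributions gives the claimed bound for $v_{r,\gamma}$, and $v_{\alpha,\gamma}$ is handled the same way via (\ref{dercomalpha}).

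The main obstacle is proving the refined kernel bound $|\partial_\alpha K_i|\leq C/|x-y|^{3+\gamma}$ used in the base case: naively, the chain rule on $|x-y|^{-(3+\gamma)}$ produces a term of apparent size $1/|x-y|^{4+\gamma}$, which would give the base case an unwanted extra loss. The identity $\partial_\alpha|x-y|^2=2Rr\sin(\alpha-A)$ combined with $|\sin(\alpha-A)|\leq C|x-y|/\sqrt{Rr}$ (from $|x-y|^2\geq 2Rr(1-\cos(\alpha-A))$) absorbs one factor of $|x-y|$ and restores the correct $1/|x-y|^{3+\gamma}$ decay; this geometric cancellation is what makes the whole base-case calculation close with the target exponent.
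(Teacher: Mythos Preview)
Your argument is correct and follows a genuinely different route from the paper in both halves.

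For the base case the paper works directly with the polar expression for $v_{r,\gamma}$, expands $\sin(N\alpha')$ via the addition formula, partitions the angular integral into intervals of length $2\pi/N$, and on each interval applies a second--order Taylor estimate (morally two integrations by parts), which is why the $C^2$ bound on $f_{N,\delta}$ enters. You instead pass to the Cartesian velocities and perform a single integration by parts in $\alpha$ over the whole support; this uses only the $C^1$ bound on $f_{N,\delta}$ and is more economical. For the higher derivatives the paper differentiates the \emph{function} $f_{N,\delta}\sin(N\alpha)$ and re-applies the base case term by term (hence the need for the hypotheses on $\|f_{N,\delta}\|_{C^j}$ for $j\leq k$), whereas you differentiate the \emph{kernel} and combine the crude bound $|\partial_x^k K_i|\leq C|x-y|^{-(2+\gamma+k)}$ with $\|w_{N,\delta}\|_{L^1}$ and the trade $d^{-1}\leq N^{1-\delta}$. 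Your route is more direct and uses strictly weaker information on $f_{N,\delta}$; the paper's route has the advantage of recycling the base case verbatim.

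One small imprecision: your justification of $|\partial_\alpha K_i|\leq C/|x-y|^{3+\gamma}$ via $|\sin(\alpha-A)|\leq C|x-y|/\sqrt{Rr}$ only yields $|\partial_\alpha K_i|\leq C\sqrt{Rr}/|x-y|^{3+\gamma}$, and $\sqrt{R}$ is not bounded for $x$ far from the origin. The clean fix is to observe that $Rr\sin(\alpha-A)=y^{\perp}\cdot(x-y)$, whence $|\partial_\alpha|x-y|^2|\leq 2r|x-y|$; since $r\leq 3/2$ on the support this gives the uniform bound you claimed. Alternatively, when $R$ is large one has $d\gtrsim R$, and the extra $\sqrt{R}$ is then absorbed by an extra factor of $d^{-1/2}$, so your final estimate still closes either way.
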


\begin{proof}
We will consider $c_{1}=0$ for simplicity and we will obtain the expression only for $v_{r,\gamma}$, $v_{\alpha,\gamma}$ being equivalent. That is to say, we want to compute
\begin{align*}
   &\int_{supp(f^{pol}_{N,\delta})}\frac{(r')^2\sin(\alpha'-A)}{|(R-r')^2+2Rr'(1-\cos(A-\alpha'))|^{(3+\gamma)/2}}f_{N,\delta}(r',\alpha')\sin(N\alpha')d\alpha'dr'\\
   &=\cos(NA)\int_{supp(f^{pol}_{N,\delta})}\frac{(r')^2\sin(\alpha'-A)f_{N,\delta}(r',\alpha')\sin(N\alpha'-NA)}{|(R-r')^2+2Rr'(1-\cos(A-\alpha'))|^{(3+\gamma)/2}}d\alpha'dr'\\
   &+\sin(NA)\int_{supp(f^{pol}_{N,\delta})}\frac{(r')^2\sin(\alpha'-A)f_{N,\delta}(r',\alpha')\cos(N\alpha'-NA)}{|(R-r')^2+2Rr'(1-\cos(A-\alpha'))|^{(3+\gamma)/2}}d\alpha'dr'\\
   &=\cos(NA)\int_{supp(f^{pol}_{N,\delta})-(0,A)}\frac{(r')^2\sin(\bar{\alpha})f_{N,\delta}(r',\bar{\alpha}+A)\sin(N\bar{\alpha})}{|(R-r')^2+2Rr'(1-\cos(\bar{\alpha}))|^{(3+\gamma)/2}}d\bar{\alpha}dr'\\
   &+\sin(NA)\int_{supp(f^{pol}_{N,\delta})-(0,A)}\frac{(r')^2\sin(\bar{\alpha})f_{N,\delta}(r',\bar{\alpha}+A)\cos(N\bar{\alpha})}{|(R-r')^2+2Rr'(1-\cos(\bar{\alpha}))|^{(3+\gamma)/2}}d\bar{\alpha}dr'\\
\end{align*}
with $f$, $R$ and $A$ as in the hypothesis of the lemma. We will focus on the part depending on $cos(NA)$, the other term being analogous. First, a second order Taylor expansion and some computations give us, since $r'\in(\frac12,\frac32)$

\begin{align*}
   &|\int_{i\frac{2\pi}{N}+\frac{\pi}{2N}}^{(i+1)\frac{2\pi}{N}+\frac{\pi}{2N}}\frac{(r')^2\sin(\bar{\alpha})}{|(R-r')^2+2Rr'(1-\cos(\bar{\alpha}))|^{(3+\gamma)/2}}f_{N,\delta}(r',\bar{\alpha}+A)\sin(N\bar{\alpha})d\bar{\alpha}|\\ 
   &\leq \int_{i\frac{2\pi}{N}+\frac{\pi}{2N}}^{(i+1)\frac{2\pi}{N}+\frac{\pi}{2N}}\bigg(\frac{2\pi}{N}\bigg)^2 sup_{\bar{\alpha}\in[i\frac{2\pi}{N}+\frac{\pi}{2N},(i+1)\frac{2\pi}{N}+\frac{\pi}{2N}]}\Big(\Big|\frac{\partial^2}{\partial \bar{\alpha}^2}\frac{(r')^2\sin(\bar{\alpha})f_{N,\delta}(r',\bar{\alpha}+A)}{|(R-r')^2+2Rr'(1-\cos(\bar{\alpha}))|^{(3+\gamma)/2}}\Big|\Big)\\
   &\times|\sin(N\bar{\alpha})|d\bar{\alpha}\\
   &\leq C\bigg(\frac{2\pi}{N}\bigg)^3\Big(\frac{||f_{N,\delta}(r,\alpha)||_{C^{2}}}{|(R-r')^2+2Rr'(1-\cos(\bar{\alpha}))|^{(2+\gamma)/2}}+\frac{||f_{N,\delta}(r,\alpha)||_{C^{1}}}{|(R-r')^2+2Rr'(1-\cos(\bar{\alpha}))|^{(3+\gamma)/2}}\\
   &+\frac{||f_{N,\delta}(r,\alpha)||_{L^{\infty}}}{|(R-r')^2+2Rr'(1-\cos(\bar{\alpha}))|^{(4+\gamma)/2}}\Big).\\
\end{align*}
Using that, for $(r',\bar{\alpha})\in supp(f^{pol}_{N,\delta})-(0,A)$ 
$$|(R-r')^2+2Rr'(1-\cos(\bar{\alpha}))|^{\frac12}\geq d((R,A),f_{N,\delta})$$

$$d((R,A),f_{N,\delta})\geq N^{-1+\delta}$$
and the properties of $f_{N,\delta}$ we get then that

\begin{align*}
   &|\int_{i\frac{2\pi}{N}+\frac{\pi}{2N}}^{(i+1)\frac{2\pi}{N}+\frac{\pi}{2N}}\frac{(r')^2\sin(\bar{\alpha})}{|(R-r')^2+2Rr'(1-\cos(\bar{\alpha}))|^{(3+\gamma)/2}}f_{N,\delta}(r',\bar{\alpha}+A)\sin(N\bar{\alpha})d\bar{\alpha}|\\ 
   &\leq \frac{CM N^{-1-2\delta}}{d((R,A),f_{N,\delta})^{2+\gamma}}\\
\end{align*}
so that

\begin{align*}
    &\Big|\int_{1-N^{-1+\delta}}^{1+N^{-1+\delta}}\int_{-\lfloor\frac{S(r')N}{2\pi}\rfloor \frac{2\pi}{N}+\frac{\pi}{2N}}^{\lfloor\frac{S(r')N}{2\pi}\rfloor \frac{2\pi}{N}-\frac{3\pi}{2N}} \frac{(r')^2\sin(\bar{\alpha})}{|(R-r')^2+2Rr'(1-\cos(\bar{\alpha}))|^{(3+\gamma)/2}}f_{N,\delta}(r',\bar{\alpha}+A)\sin(N\bar{\alpha})d\bar{\alpha}dr'\Big|\\
    &\leq \int_{1-N^{-1+\delta}}^{1+N^{-1+\delta}}\frac{CM N^{-1-\delta}}{d((R,A),f_{N,\delta})^{2+\gamma}}dr'\leq \frac{CM }{N^2 d((R,A),f_{N,\delta})^{2+\gamma}}.\\
\end{align*}

As for the rest of the integral we have

\begin{align*}
    &\Big|\int_{1-N^{-1+\delta}}^{1+N^{-1+\delta}}\int_{\lfloor\frac{S(r')N}{2\pi}\rfloor \frac{2\pi}{N}-\frac{3\pi}{2N}}^{S(r')} \frac{(r')^2\sin(\bar{\alpha})}{|(R-r')^2+2Rr'(1-\cos(\bar{\alpha}))|^{(3+\gamma)/2}}f_{N,\delta}(r',\bar{\alpha}+A)\sin(N\bar{\alpha})d\bar{\alpha}dr'\Big|\\
    &\leq \int_{1-N^{-1+\delta}}^{1+N^{-1+\delta}}\frac{CM N^{-1}}{d((R,A),f_{N,\delta})^{2+\gamma}}dr'\leq \frac{CM N^{\delta} }{N^2 d((R,A),f_{N,\delta})^{2+\gamma}},\\
\end{align*}
and
\begin{align*}
    &\Big|\int_{1-N^{-1+\delta}}^{1+N^{-1+\delta}}\int_{-S(r')}^{-\lfloor\frac{S(r')N}{2\pi}\rfloor \frac{2\pi}{N}+\frac{\pi}{2N}} \frac{(r')^2\sin(\bar{\alpha})}{|(R-r')^2+2Rr'(1-\cos(\bar{\alpha}))|^{(3+\gamma)/2}}f_{N,\delta}(r',\bar{\alpha}+A)\sin(N\bar{\alpha})d\bar{\alpha}dr'\Big|\\
    &\leq \int_{1-N^{-1+\delta}}^{1+N^{-1+\delta}}\frac{CM N^{-1}}{d((R,A),f_{N,\delta})^{2+\gamma}}dr'\leq \frac{CM N^{\delta} }{N^2 d((R,A),f_{N,\delta})^{2+\gamma}}\\
\end{align*}
and we are done.

To obtain the result for the derivatives, we first note that since
\begin{align}\label{v1v2rad}
 &v_{1,\gamma}(w)=\cos(\alpha)v_{r,\gamma}(w)-\sin(\alpha)v_{\alpha,\gamma}(w)\\\nonumber
 &v_{2,\gamma}(w)=\sin(\alpha)v_{r,\gamma}(w)+\cos(\alpha)v_{\alpha,\gamma}(w)\\\nonumber
\end{align}
then for $x=(x_{1},x_{2})=(R\cos(A),R\sin(A))$

$$|v^{pol}_{1,\gamma}(f_{N,\delta}(r,\alpha)\sin(N\alpha))(R,A)|\leq C\frac{M}{|d(x,f_{N,\delta})|^{2+\gamma}} N^{-2+\delta},$$

$$|v^{pol}_{2,\gamma}(f_{N,\delta}(r,\alpha)\sin(N\alpha))(R,A)|\leq C\frac{M}{|d(x,f_{N,\delta})|^{2+\gamma}} N^{-2+\delta}.$$

Furthermore, derivation commutes with the operators $v_{1,\gamma}$ and $v_{2,\gamma}$, so we can prove that

$$|\frac{\partial^{j}v^{pol}_{1,\gamma}(f_{N,\delta}(r,\alpha)\sin(N\alpha))(R,A)}{\partial x_{1}^{l}\partial x_{2}^{j-l}}|\leq C\frac{M}{|d(x,f_{N,\delta})|^{2+\gamma}} N^{-2+\delta+j},$$

$$|\frac{\partial^{j}v^{pol}_{2,\gamma}(f_{N,\delta}(r,\alpha)\sin(N\alpha))(R,A)}{\partial x_{1}^{l}\partial x_{2}^{j-l}}|\leq C\frac{M}{|d(x,f_{N,\delta})|^{2+\gamma}} N^{-2+\delta+j}$$ by differentiating $f_{N,\delta}(r,\alpha)\sin(N\alpha)$ and applying our lemma for each individual term.

Then, using (\ref{vrv1v2}) and computing $\frac{\partial^{j}}{\partial x_{1}^{l}\partial x_{2}^{j-l}} v_{r,\gamma}$, $\frac{\partial^{j}}{\partial x_{1}^{l}\partial x_{2}^{j-l}} v_{\alpha,\gamma}$ we obtain, for $r\geq\frac12$ that

\begin{align*}
    &\frac{\partial^{j}}{\partial x_{1}^{l}\partial x_{2}^{j-l}} v_{r,\gamma}(R,A)\leq  C\Big( \sum_{k=0}^{j}\sum_{l=0}^{k} |\frac{\partial^{k} v_{1,\gamma}}{\partial x_{1}^{l}\partial x_{2}^{k-l}} (R,A)|+|\frac{\partial^{k} v_{2,\gamma}}{\partial x_{1}^{l}\partial x_{2}^{k-l}} (R,A)|\Big)\\
    &\leq C\frac{M}{|d(x,f_{N,\delta})|^{2+\gamma}} N^{-2+\delta+j},\\
\end{align*}

\begin{align*}
    &\frac{\partial^{j}}{\partial x_{1}^{l}\partial x_{2}^{j-l}} v_{\alpha,\gamma}(R,A)\leq  C\Big( \sum_{k=0}^{j}\sum_{l=0}^{k} |\frac{\partial^{k} v_{1,\gamma}}{\partial x_{1}^{l}\partial x_{2}^{k-l}} (R,A)|+|\frac{\partial^{k} v_{2,\gamma}}{\partial x_{1}^{l}\partial x_{2}^{k-l}} (R,A)|\Big)\\
    &\leq C\frac{M}{|d(x,f_{N,\delta})|^{2+\gamma}} N^{-2+\delta+j},\\
\end{align*}

and we are done.

\end{proof}

\section{Pseudo-solutions considered and their properties}

To obtain ill-posedness for the space $C^{k,\beta}$ for $\gamma$-SQG, we will add perturbations to a radial solution $f(r)$  (with $f(r)$ chosen so that it has some specific properties). These perturbations will be of the form

\begin{equation}\label{pertbase}
    \lambda\sum_{l=0}^{L-1} f(N^{1-\delta}(r-1),N^{1-\delta}\alpha)\frac{\cos(N(M+l)(\alpha-\alpha^{1})+\alpha^{2}+\frac{k\pi}{2})}{L(NM)^{k+\beta}},
\end{equation}
with

\begin{itemize}
    \item $f(r-1,\alpha)=g(r-1)g(\alpha)$, $g$ a positive $C^\infty$ function with support in $[-\frac12,\frac12]$ and such that $f(x)=1$ if $ x\in[-\frac14,\frac14]$ and $||f(r-1,\alpha)||_{C^{j}}\leq 100^{j}$,
    \item $M,N,\lambda>0$,  $\delta\in(0,\frac{1}{2})$, $L\in\mathds{N}$ and $\alpha^{1},\alpha^{2}\in \mathds{R}$,
    \item $N^{\delta}\geq 100$, $N^{1-\delta}\geq 100$,
    \item $k\in\mathds{N}$, $\beta\in(0,1]$, $\gamma\in(0,1)$,
    \item $k+\beta>1+2\delta+\gamma$,
    \item $L<\frac{M}{2}$.
    
\end{itemize}

For compactness of notation, whenever we have $f,\delta,N,L,M,\lambda$ satisfying these properties we will say that they satisfy the usual conditions. From now on we will consider $k$, $\beta$, $\gamma$ and $\delta$  fixed satisfying these properties, just so that we can avoid extra sub-indexes for these parameters. Due to this, one needs to keep in mind that in general the constants in the lemmas obtained might depend on the specific values of $k$, $\beta$, $\gamma$ and $\delta$.
Before we study how this kind of perturbations will evolve with time, we start by obtaining some basic properties regarding the norms of (\ref{pertbase}).

\begin{lemma}\label{normasckpert}
Given a perturbation as in (\ref{pertbase}), which we will refer as $w_{k,\beta}$, with $f,\delta,N,L,M,\lambda$ satisfying the usual conditions we have that

$$||w_{k,\beta}||_{C^{j}}\leq C K_{j}\lambda (NM)^{j-k-\beta}$$

$$|\frac{\partial^{k}w_{k,\beta}(r,\alpha)}{\partial^{k-i} x_{1}\partial^{i}x_{2}}|\leq \frac{C\lambda}{L |\sin(N\frac{\alpha-\alpha^{1}}{2})|(NM)^{\beta}}+C\lambda (NM)^{-\delta-\beta}+\frac{C\lambda L}{M(NM)^{\beta}}$$

$$|\frac{\partial^{k+1}w_{k,\beta}(r,\alpha)}{\partial^{k+1-i} x_{1}\partial^{i}x_{2}}|\leq \frac{C\lambda(NM)^{1-\beta}}{L |\sin(N\frac{\alpha-\alpha^{1}}{2})|}+C\lambda (NM)^{-\delta-\beta+1}+\frac{C\lambda(NM)^{1-\beta}L}{M}$$

with $C$ a constant depending on $f$ and $K_{j}$ constants depending on $j$.
\end{lemma}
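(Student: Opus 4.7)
My plan is to first reduce cartesian derivatives to polar ones and then exploit the explicit trigonometric structure of the sum to isolate a Dirichlet-kernel type main term from several smaller error terms. Since $\mathrm{supp}\,w_{k,\beta}$ lies in an annulus near $r=1$, the formulas (\ref{derx1})--(\ref{derx2}) express each cartesian derivative $\partial_{x_i}$ as a combination of $\partial_r$ and $\partial_\alpha$ with smooth, uniformly bounded coefficients (the $1/r$ factor is harmless). When $\partial_r$ or $\partial_\alpha$ hits the envelope $f(N^{1-\delta}(r-1),N^{1-\delta}\alpha)$ it produces a factor of at most $CN^{1-\delta}$ (using $\|f\|_{C^j}\le 100^j$); when $\partial_\alpha$ hits the oscillatory phase $\cos(N(M+l)(\alpha-\alpha^{1})+\alpha^{2}+k\pi/2)$ it produces $N(M+l)\le\tfrac{3}{2}NM$, using $L<M/2$. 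Under the standing conditions, $N^{1-\delta}\le NM$, so each cartesian derivative contributes a factor of at most $CNM$.

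The first estimate then follows by bounding each of the $L$ summands crudely by $C\lambda\|f\|_{C^j}(NM)^j/(L(NM)^{k+\beta})$ and summing over $l$, yielding $CK_j\lambda(NM)^{j-k-\beta}$.

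For the second estimate, decompose the $k$-th derivative via the Leibniz rule according to how many derivatives land on the envelope. The dominant term is the one where all $k$ derivatives fall on the cosine; the built-in $k\pi/2$ phase then collapses this derivative to $(-1)^k(N(M+l))^k\cos(N(M+l)(\alpha-\alpha^{1})+\alpha^{2})$, so the contribution equals
$$\frac{(-1)^k\lambda\, f(N^{1-\delta}(r-1),N^{1-\delta}\alpha)}{L(NM)^{k+\beta}}\sum_{l=0}^{L-1}(N(M+l))^k\cos\bigl(N(M+l)(\alpha-\alpha^{1})+\alpha^{2}\bigr).$$
Writing $(N(M+l))^k=(NM)^k+O((NM)^k L/M)$ and bounding the remainder trivially across the $L$ summands produces the $\frac{C\lambda L}{M(NM)^\beta}$ contribution. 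The leading piece reduces to the geometric-sum bound
$$\Bigl|\sum_{l=0}^{L-1}\cos(N(M+l)\theta+\alpha^{2})\Bigr|=\Bigl|\mathrm{Re}\,e^{i(\alpha^{2}+NM\theta)}\frac{e^{iNL\theta}-1}{e^{iN\theta}-1}\Bigr|\le\frac{1}{|\sin(N\theta/2)|},\qquad \theta=\alpha-\alpha^{1},$$
which yields the term $\frac{C\lambda}{L(NM)^\beta|\sin(N(\alpha-\alpha^{1})/2)|}$. Any remaining term in the Leibniz expansion has at least one derivative on $f$, replacing at least one factor $NM$ by $N^{1-\delta}$; summing $L$ such bounded terms (without invoking cancellation) yields a contribution of size at most $C\lambda N^{-\delta}/(M(NM)^\beta)\le C\lambda(NM)^{-\delta-\beta}$, where the last inequality uses $M\ge M^\delta$.

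The third estimate follows from the same decomposition applied to one additional cartesian derivative: each of the three contributions above picks up one extra factor of $NM$ (either from hitting the cosine or, in the Leibniz expansion, by the same accounting used for the second estimate). The only genuinely non-routine step is the Dirichlet-kernel / geometric-sum bound on $\sum_l\cos(N(M+l)\theta+\alpha^{2})$, which is what allows the $1/L$ gain in the main term to be visible; everything else is bookkeeping of the relative sizes of $L$, $M$, $N$ and $\delta$ under the standing hypotheses.
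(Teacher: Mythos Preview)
Your proposal is correct and follows essentially the same route as the paper: reduce Cartesian derivatives to polar ones near $r=1$, isolate the term where all $k$ derivatives of $\partial_\alpha$ fall on the cosine, expand $(N(M+l))^k=(NM)^k+O((NM)^kL/M)$, and bound the remaining geometric sum $\sum_l\cos(N(M+l)\theta+\alpha^2)$ by the Dirichlet-kernel quantity $1/|\sin(N\theta/2)|$. The paper writes the sum via the closed form $\frac{\sin(NL\theta/2)}{\sin(N\theta/2)}\cos(\cdots)$ rather than as the real part of a geometric series, but this is the same identity, and the treatment of the Leibniz remainders and of the $(k+1)$-st derivative is identical in spirit.
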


\begin{proof}
The bounds for the $C^{j}$ norms can be obtained directly by using that, for functions with support concentrated around $r=1$, we have that

$$||f(x_{1},x_{2})||_{C^{j}}\leq K_{j}||f^{pol}(r,\alpha)||_{C^{j}}$$
and the bounds for the derivatives of $w^{pol}_{k,\beta}$ can be obtained by direct computation. For the other two inequalities, we have that
\begin{align*}
    &|\frac{\partial^{k}w_{k,\beta}(r,\alpha)}{\partial^{k-i} x_{1}\partial^{i}x_{2}}|\leq K_{k}||w_{k,\beta}^{pol}(r,\alpha)||_{C^{k}}\leq C\lambda (NM)^{-\beta-\delta}\\
    &+C\lambda|\sum_{l=0}^{L-1}f(N^{1-\delta}(r-1),N^{1-\delta}\alpha)\frac{\partial^k}{\partial \alpha^{k}}(\frac{\cos(N(M+l)(\alpha-\alpha^{1})+\alpha^{2}+\frac{k\pi}{2})}{L(NM)^{k+\beta}})|\\
    &\leq C\lambda (NM)^{-\beta-\delta}+\frac{C\lambda L}{M(NM)^{\beta}}\\
    &+C\lambda|\sum_{l=0}^{L-1}f(N^{1-\delta}(r-1),N^{1-\delta}\alpha)\frac{\cos(N(M+l)(\alpha-\alpha^{1})+\alpha^{2})}{L(NM)^{\beta}}|\\
\end{align*}
but we can compute  $\sum_{l=0}^{L-1}\cos(N(M+l)(\alpha-\alpha^{1})+\alpha^{2})$ as

$$\sum_{l=0}^{L-1}\cos(N(M+l)(\alpha-\alpha^{1})+\alpha^{2})$$
$$=\frac{\sin(\frac{NL(\alpha-\alpha^{1})}{2})}{\sin(N\frac{\alpha-\alpha^{1}}{2})}\cos(NM(\alpha-\alpha^{1})+\alpha^{2}+\frac{N(L-1)(\alpha-\alpha^{1})}{2})$$
which gives us

\begin{align*}
    &|\frac{\partial^{k}w_{k,\beta}(r,\alpha)}{\partial^{k-i} x_{1}\partial^{i}x_{2}}|\\
    &\leq C\lambda (NM)^{-\beta-\delta}+\frac{C\lambda L}{M(NM)^{\beta}}+\frac{C\lambda }{|\sin(N\frac{\alpha-\alpha^{1}}{2})|L(NM)^{\beta}}.\\
\end{align*}

The proof with $k+1$ derivatives is done analogously.
\end{proof}

This lemma tells us that these perturbations behave similarly to wave packets, with their amplitude and derivatives decreasing as one gets further from $\alpha^{1}+j\frac{2\pi}{N}$ . We will use this property to obtain upper bounds for the norms of these perturbations when several of them are placed appropriately far way from each other. For this, we first we need a short technical lemma.

\begin{lemma}\label{interpolacioncalfa}
Given a $C^1$ function $f(x):\mathds{R}\rightarrow \mathds{R}$ with $||f(x)||_{L^{\infty}}\leq M_{1}$ and $||f'(x)||_{L^{\infty}}\leq M_{2}$, we have that, for any $x,h\in \mathds{R}$, $\beta\in(0,1)$

$$\frac{|f(x)-f(x+h)|}{|h|^{\beta}}\leq 2^{1-\beta}M_{1}^{1-\beta}M_{2}^{\beta}.$$
\end{lemma}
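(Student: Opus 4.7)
The plan is to prove this by the standard two-regime interpolation argument: one bounds the difference quotient in two different ways and then takes the minimum. Concretely, I would observe two elementary bounds that hold uniformly. First, by the triangle inequality and the $L^{\infty}$ estimate, $|f(x) - f(x+h)| \leq 2M_{1}$ for every $h$. Second, since $f$ is $C^{1}$ with bounded derivative, the mean value theorem (or integration of $f'$) yields $|f(x) - f(x+h)| \leq M_{2}|h|$.

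Combining these gives $|f(x) - f(x+h)| \leq \min(2M_{1}, M_{2}|h|)$, so the difference quotient satisfies
\[
\frac{|f(x)-f(x+h)|}{|h|^{\beta}} \leq \min\!\left(\frac{2M_{1}}{|h|^{\beta}},\, M_{2}|h|^{1-\beta}\right).
\]
Next I would split into two cases according to the threshold $|h|_{\ast} := 2M_{1}/M_{2}$, which is the value of $|h|$ at which the two bounds coincide. For $|h| \leq |h|_{\ast}$, I use the second bound and estimate $M_{2}|h|^{1-\beta} \leq M_{2}(2M_{1}/M_{2})^{1-\beta} = 2^{1-\beta} M_{1}^{1-\beta} M_{2}^{\beta}$, using that $1-\beta \geq 0$. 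For $|h| \geq |h|_{\ast}$, the first bound gives $2M_{1}/|h|^{\beta} \leq 2M_{1}(M_{2}/(2M_{1}))^{\beta} = 2^{1-\beta} M_{1}^{1-\beta} M_{2}^{\beta}$.

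Both cases produce the same bound $2^{1-\beta} M_{1}^{1-\beta} M_{2}^{\beta}$, which completes the proof. There is essentially no obstacle here: the only point to be mindful of is that the threshold $|h|_{\ast}$ is chosen precisely to balance the two estimates, and that the inequality $1-\beta \geq 0$ (valid since $\beta \in (0,1)$) is what makes the interpolated bound monotone in the right direction on each side of $|h|_{\ast}$. The degenerate cases ($M_{2}=0$, in which $f$ is constant, or $h=0$) are handled trivially by inspection.
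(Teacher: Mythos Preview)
Your proof is correct and follows essentially the same approach as the paper: both obtain the two elementary bounds $2M_{1}/|h|^{\beta}$ and $M_{2}|h|^{1-\beta}$, identify the crossover value $|h|=2M_{1}/M_{2}$, and evaluate there to get $2^{1-\beta}M_{1}^{1-\beta}M_{2}^{\beta}$. The only cosmetic difference is that you split explicitly into the two regimes $|h|\le |h|_{\ast}$ and $|h|\ge |h|_{\ast}$, whereas the paper phrases it as finding the supremum of the minimum.
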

\begin{proof}
We have the two trivial bounds

\begin{equation*}
    \frac{|f(x)-f(x+h)|}{|h|^{\beta}}\leq \frac{2M_{1}}{|h|^{\beta}}
\end{equation*}

\begin{equation*}
    \frac{|f(x)-f(x+h)|}{|h|^{\beta}}\leq \frac{|h|M_{2}}{|h|^{\beta}},
\end{equation*}

and thus it is enough to find a bound for

$$sup_{h\in \mathds{R}}(min(\frac{2M_{1}}{|h|^{\beta}},\frac{|h|M_{2}}{|h|^{\beta}})).$$

But it is easy to see that the supremum is attained when $\frac{2M_{1}}{|h|^{\beta}}=\frac{|h|M_{2}}{|h|^{\beta}}$. Since this happens when $|h|=\frac{2M_{1}}{M_{2}}$, substituting $|h|$ in any of the upper bounds gives us

$$\frac{|f(x)-f(x+h)|}{|h|^{\beta}}\leq \frac{2M_{1}}{\big(\frac{2M_{1}}{M_{2}}\big)^{\beta}}=(2M_{1})^{1-\beta}M_{2}^{\beta}.$$
\end{proof}
Now we are ready to prove decay in space of the functions that we use as perturbations.

\begin{lemma}\label{normackbetapert}
Given a function $g(x)$ of the form

$$g^{pol}(r,\alpha)=\sum_{j=1}^{J}\lambda_{j}\sum_{l=0}^{L-1} f(N^{1-\delta}(r-1),N^{1-\delta}\alpha)\frac{\cos(N(M_{j}+l)(\alpha-\alpha_{j}^{1})+\alpha_{j}^{2}+\frac{k\pi}{2})}{JL(NM_{j})^{k+\beta}}$$
where $f,\delta,N,L,M_{j},\lambda_{j}$ satisfy the usual conditions and
with $\alpha^{1}_{j}\in[c\frac{\pi}{N},\frac{\pi}{N}]$ and $|\alpha^{1}_{j_{1}}-\alpha^{1}_{j_{2}}|\geq c\frac{\pi}{N}$ for some $c>0$ and $\frac{M_{j_{1}}}{M_{j_{2}}}\leq 2$ for $j_{1},j_{2}\in\{1,2,...,J\}$ then we have that

$$|g|_{C^{k,\beta}}\leq C\bar{\lambda}(\frac{1}{J}+\frac{1}{(N\bar{M})^{\delta}}+\frac{L}{\bar{M}}+\frac{1}{c L})$$
with $C$ depending on $k$, $\beta$ and $\delta$ and where $\bar{M}:=sup_{j=1,...,J}(M_{j})$, $\bar{\lambda}:=sup_{j=1,...,J}(\lambda_{j}).$

\end{lemma}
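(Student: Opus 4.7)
The plan is to obtain uniform bounds on the Cartesian derivatives $\|\partial^{k}g\|_{\infty}$ and $\|\partial^{k+1}g\|_{\infty}$ (in the sense of Lemma \ref{normasckpert}), and then to convert them into a bound on $|g|_{C^{k,\beta}}$ by interpolation via Lemma \ref{interpolacioncalfa}. I decompose $g=\sum_{j=1}^{J}g_{j}$, where each $g_{j}^{pol}(r,\alpha)=\frac{\lambda_{j}}{J}\tilde w_{j}^{pol}(r,\alpha)$ is, up to the scalar factor $\lambda_{j}/J$, a single perturbation of the form (\ref{pertbase}) with parameters $(M_{j},\alpha^{1}_{j},\alpha^{2}_{j})$ and $\lambda=1$. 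Lemma \ref{normasckpert} then applies pointwise to each $\tilde w_{j}$.

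The essential point is angular separation. The pointwise estimate of Lemma \ref{normasckpert} for $|\partial^{k}\tilde w_{j}|$ contains a piece $\frac{C}{L\,|\sin(N(\alpha-\alpha^{1}_{j})/2)|\,(NM_{j})^{\beta}}$ that blows up whenever $\alpha\equiv\alpha^{1}_{j}\pmod{2\pi/N}$. The hypothesis $|\alpha^{1}_{j_{1}}-\alpha^{1}_{j_{2}}|\geq c\pi/N$ means that the reduced phases $N\alpha^{1}_{j}/2\bmod\pi$ are $c\pi/2$-separated in $[0,\pi)$; hence for any angle $\alpha$ at most one index $j_{0}$ can satisfy $|\sin(N(\alpha-\alpha^{1}_{j_{0}})/2)|<\sin(c\pi/4)$. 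For every other $j\neq j_{0}$ Jordan's inequality gives $|\sin(N(\alpha-\alpha^{1}_{j})/2)|\geq c/2$, so the singular piece is controlled by $C/(cL(NM_{j})^{\beta})$. For the single bad index $j_{0}$ I discard the pointwise bound and use instead the crude uniform estimate $\|\partial^{k}\tilde w_{j_{0}}\|_{\infty}\leq CK_{k}(NM_{j_{0}})^{-\beta}$ from the first inequality of Lemma \ref{normasckpert}, which does not see the singularity at all.

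Summing over $j$, pulling in the $1/J$ factor from each $g_{j}$, and using $M_{j_{1}}/M_{j_{2}}\leq 2$ to replace every $M_{j}$ by $\bar M$ at the cost of a universal constant, the bad index contributes the $1/J$ term, the $(J-1)$ good indices contribute $1/(cL)$, and the other two pieces in Lemma \ref{normasckpert} yield $(N\bar M)^{-\delta}$ and $L/\bar M$. This gives
\[
\|\partial^{k}g\|_{\infty}\leq \frac{C\bar\lambda}{(N\bar M)^{\beta}}\left(\frac{1}{J}+\frac{1}{cL}+\frac{1}{(N\bar M)^{\delta}}+\frac{L}{\bar M}\right),
\]
and the identical argument applied to the $C^{k+1}$ bounds in Lemma \ref{normasckpert} produces
\[
\|\partial^{k+1}g\|_{\infty}\leq C\bar\lambda(N\bar M)^{1-\beta}\left(\frac{1}{J}+\frac{1}{cL}+\frac{1}{(N\bar M)^{\delta}}+\frac{L}{\bar M}\right).
\]

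Finally I apply Lemma \ref{interpolacioncalfa} along lines in $\mathbb{R}^{2}$ to each $k$-th Cartesian derivative of $g$: the factors $(N\bar M)^{-\beta(1-\beta)}$ and $(N\bar M)^{\beta(1-\beta)}$ cancel, and after summing over the finitely many multi-indices of order $k$ I obtain $|g|_{C^{k,\beta}}\leq C\bar\lambda\bigl(\tfrac{1}{J}+\tfrac{1}{cL}+(N\bar M)^{-\delta}+L/\bar M\bigr)$, as required. The delicate point is the pigeonhole/Jordan argument above: one must exploit the hypothesis on the $\alpha^{1}_{j}$'s precisely to ensure that at most one summand is close to the zero set of $\sin(N(\cdot-\alpha^{1}_{j})/2)$ at any angle $\alpha$, and then trade the pointwise singular bound for the uniform $C^{k}$ bound for that summand. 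Everything else is routine bookkeeping followed by one interpolation.
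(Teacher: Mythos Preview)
Your proposal is correct and follows essentially the same route as the paper's own proof: both use the angular separation hypothesis to show that at most one index $j_{0}$ is ``bad'' at any given $\alpha$, handle that index with the crude first bound of Lemma \ref{normasckpert} and the remaining indices with the pointwise bound involving $|\sin(N(\alpha-\alpha^{1}_{j})/2)|^{-1}\gtrsim 1/c$, sum, repeat for order $k+1$, and then interpolate via Lemma \ref{interpolacioncalfa}. The only cosmetic differences are your explicit mention of Jordan's inequality and the one-dimensional reduction ``along lines,'' both of which the paper uses implicitly.
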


\begin{proof}

We will compute bounds for the seminorm $|\cdot|_{C^{\alpha}}$ of an arbitrary k-th derivative of $g$, and we will refer to it simply as $g^{(k)}(x)$ since the specific derivative we consider is irrelevant for the proof and we will use $d^{k}$ as notation for the specific k-th derivative for the same reason. We start by obtaining bounds for $||g^{(k)}||_{L^{\infty}}$. Since $|\alpha^{1}_{j_{1}}-\alpha^{1}_{j_{2}}|\geq c\frac{\pi}{N}$ and $\alpha^{1}_{j}\in [c\frac{\pi}{N},\frac{\pi}{N}]$, we have that for any $\alpha$ there is at most one $j$ with

\begin{equation}\label{jcercano}
    min_{n\in \mathds{Z}}|\alpha-\alpha^{1}_{j}-\frac{\pi n}{N}|< \frac{c\pi}{2N}.
\end{equation}

For simplicity, assume that $j=1$ fulfils (\ref{jcercano}) (the proof when other values of $j$ or no value of $j$ fulfil (\ref{jcercano}) is equivalent).

Then, using lemma \ref{normasckpert} we obtain

\begin{align*}
    &|g^{(k)}(r,\alpha)|\leq \\
    &\leq C|\lambda_{1}\sum_{l=0}^{L-1}d^{k}\big( f(N^{1-\delta}(r-1),N^{1-\delta}\alpha)\frac{\cos(N(M_{1}+l)(\alpha-\alpha_{j}^{1})+\alpha_{1}^{2}+\frac{k\pi}{2})}{JL(NM_{1})^{k+\beta}}\big)|\\
    &+C|\sum_{j=2}^{J}\lambda_{j}\sum_{l=0}^{L-1} d^{k}\big(f(N^{1-\delta}(r-1),N^{1-\delta}\alpha)\frac{\cos(N(M_{j}+l)(\alpha-\alpha_{j}^{1})+\alpha_{j}^{2}+\frac{k\pi}{2})}{JL(NM_{j})^{k+\beta}}\big)|\\
    &\leq \frac{C\bar{\lambda}}{J(N\bar{M})^{\beta}}+\frac{C\bar{\lambda}}{(N\bar{M})^{\beta+\delta}}+\frac{C\bar{\lambda} L}{\bar{M}(N\bar{M})^{\beta}}+\frac{C\bar{\lambda} }{|\sin(\frac{c\pi}{2})|L(N\bar{M})^{\beta}}\\
    &\leq \frac{C\bar{\lambda}}{J(N\bar{M})^{\beta}}+\frac{C\bar{\lambda}}{(N\bar{M})^{\beta+\delta}}+\frac{C\bar{\lambda} L}{\bar{M}(N\bar{M})^{\beta}}+\frac{C\bar{\lambda} }{c L(N\bar{M})^{\beta}}\\
    &=\frac{C\bar{\lambda}}{(\bar{M}N)^{\beta}}(\frac{1}{J}+\frac{1}{(N\bar{M})^{\delta}}+\frac{L}{\bar{M}}+\frac{1}{c L}).\\
\end{align*}

Arguing the same way for any arbitrary $k+1$ derivative we obtain

\begin{align*}
    &|g^{(k+1)}(r,\alpha)| \\
    &\leq N\bar{M} \frac{C\bar{\lambda}}{(\bar{M}N)^{\beta}}(\frac{1}{J}+\frac{1}{(N\bar{M})^{\delta}}+\frac{L}{\bar{M}}+\frac{1}{c L}).\\
\end{align*}
and then direct application of lemma \ref{interpolacioncalfa} gives us

$$\frac{g^{(k)}(x)-g^{(k)}(x+h)}{|h|^{\beta}}\leq C\bar{\lambda}(\frac{1}{J}+\frac{1}{(N\bar{M})^{\delta}}+\frac{L}{\bar{M}}+\frac{1}{c L}).$$
\end{proof}

With this out of the way, we are ready to define the pseudo-solutions that we will use to prove ill-posedness. Namely, 
we define

\begin{align}
    &\bar{w}^{pol}_{\lambda,N,M,J,L,\tilde{t}}(r,\alpha,t):= \lambda_{0} f_{1}(r)+ \sum_{j=1}^{J}\sum_{l=0}^{L-1}\bigg(\lambda_{j}f_{2}(N^{1-\delta}(r-1),N^{1-\delta}(\alpha-t\lambda_{0} v_{\alpha,\gamma}(f_{1})(r=1)))\\ \nonumber
    & \frac{\cos(N(M_{j}+l)(\alpha-\alpha_{j}^{1}-t\lambda_{0} v_{\alpha,\gamma}(f_{1})(r=1))+\alpha_{j}^{2}+\frac{k\pi}{2}+t\lambda_{0} C_{\gamma}N^{\gamma}(M_{j}+l)^{\gamma}}{JL(NM_{j})^{k+\beta}}\big)\\ \nonumber
\end{align}
with

$$M_{j}=M(1+\frac{j}{J}),\ \lambda_{0}=\frac{\pi M^{1-\gamma}}{2\tilde{t}N^{\gamma}C_{\gamma}\gamma},\ \lambda_{j}=\lambda (1+\frac jJ)^{\beta}\text{ for } j=1,...,J,$$ 
$$\ \alpha^{1}_{j}=\frac{\pi}{2N}(1+\frac{j}{J})^{-1+\gamma}-\tilde{t}\lambda_{0} v_{\alpha,\gamma}(f_{1})(r=1),\ \alpha^{2}_{j}=-(\frac{1}{\gamma}-1)\frac{\pi}{2} M(1+\frac{j}{J})^{\gamma}.$$
The functions $f_{1,\gamma}(r)$ and $f_{2}(r-1,\alpha)$ and the values $k,\beta,\gamma,\delta,\lambda,N,M,J,L$ and $\tilde{t}$ will fulfil the following properties:

\begin{itemize}
    \item  $\lambda,N,M,J,L,\tilde{t}>0$,  $\delta\in(0,\frac12),\gamma\in(0,1)$ and  $L,J,M\in\mathds{N}$, $\frac{M}{J}\in\mathds{N}$,
        \item $f_{2}(r-1,\alpha)=g(r-1)g(\alpha)$, $g$ a positive $C^\infty$ function with support in $[-\frac12,\frac12]$ and such that $f(x)=1$ if $ x\in[-\frac14,\frac14]$ and $||f_{2}(r-1,\alpha)||_{C^{j}}\leq 100^{j}$,  
    \item $N^{\delta}\geq 100$, $N^{1-\delta}\geq 100$, $\lambda_{0}\leq 1$ (i.e. $N^{\gamma}\geq\frac{\pi M^{1-\gamma}}{2\tilde{t}C_{\gamma}\gamma}$),
    \item $k\in\mathds{N}$, $\beta\in(0,1]$, $\gamma\in(0,1)$,
    \item $k+\beta>1+2 \delta+\gamma$,
    \item $L<\frac{M}{2}$,
    \item $\frac{\partial^i \frac{v^{pol}_{r,\gamma}(f_{1})}{r}}{\partial r^i}(r=1)=0$ for $i=1,2$,
    \item $\frac{\partial f_{1}}{\partial r}=1$ if $r\in[\frac{3}{4},\frac{5}{4}]$,
    \item $supp(f_{1})\subset \{r:r\in(\frac{1}{2},K_{\gamma})\}$ for some $K_{\gamma}$ depending only on $\gamma$.
\end{itemize}
As before, to avoid extra sub-indexes we consider  $k,\beta,\delta$ and $\gamma$ to be fixed, but all the results will apply as long as they fulfil the restrictions mentioned. The constants appearing in the lemmas might depend on our specific choice but the final results will not.

However it is not immediately obvious whether the conditions we impose over $f_{1,\gamma}$ are too restrictive, so we need the following lemma to assure us that a $f_{1,\gamma}$ with the desired properties exists.

\begin{lemma}\label{valphaarb}
There exists  a $C^{\infty}$  compactly supported function  $g(.):[0,\infty)\rightarrow\mathds{R}$ with support in $ (2,\infty)$ such that $\frac{\partial^{i}\frac{ v_{\alpha,\gamma}(g(.))(r)}{r}}{\partial r^{i}}(r=1)=a_{i}$ with $i=1,2$ and $a_{i}$ arbitrary.
\end{lemma}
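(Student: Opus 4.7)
My plan is to recast the statement as the surjectivity of a linear map on $C_c^\infty((2,\infty))$, derive an explicit integral representation whose kernel is manifestly smooth, and then establish the linear independence of two scalar kernels by an asymptotic expansion.

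First, the maps $a_i : g \mapsto \partial_r^i(v_{\alpha,\gamma}(g)(r)/r)|_{r=1}$ for $i=1,2$ are linear on the space of smooth compactly supported functions with support in $(2,\infty)$, so the assignment $L(g) := (a_1(g), a_2(g))$ defines a linear map into $\mathds{R}^2$. Its image is either $\{0\}$, a line through the origin, or all of $\mathds{R}^2$; the lemma is equivalent to $L$ being surjective. Because $g$ is radial, vanishes near $r=1$, and we evaluate at $r=1$ which lies outside the support, the subtraction term $w^{pol}(r,\alpha)$ in the defining integral for $v^{pol}_{\alpha,\gamma}$ contributes nothing and the kernel is smooth. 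Substituting $\tilde r = r + h$ and using that $g$ is radial, I would write, for $r$ in a neighborhood of $1$,
\begin{equation*}
\frac{v_{\alpha,\gamma}(g)(r)}{r} = \int_2^\infty g(\tilde r)\, K(r,\tilde r)\, d\tilde r,\quad K(r,\tilde r) = \frac{\tilde r}{r}\int_{-\pi}^{\pi}\frac{r - \tilde r\cos\alpha'}{\bigl[(\tilde r - r)^2 + 2r\tilde r(1-\cos\alpha')\bigr]^{(3+\gamma)/2}}\,d\alpha'.
\end{equation*}
Since the denominator is bounded below for $\tilde r > 2$ and $r$ near $1$, the kernel $K$ is jointly $C^\infty$, so differentiation under the integral gives $a_i(g) = \int_2^\infty g(\tilde r)\, K_i(\tilde r)\, d\tilde r$ with $K_i(\tilde r) := \partial_r^i K(r,\tilde r)|_{r=1}$. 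By the fundamental lemma of the calculus of variations, $L$ is surjective if and only if $K_1$ and $K_2$ are linearly independent as continuous functions on $(2,\infty)$.

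To prove this independence I would analyze the far-field. Factoring $\tilde r^2$ out of the denominator and setting $\rho = r/\tilde r$, one finds $K(r,\tilde r) = (r\tilde r^{1+\gamma})^{-1} J(r/\tilde r)$, where
\begin{equation*}
J(\rho) := \int_{-\pi}^{\pi}\frac{\rho - \cos\alpha'}{(1 - 2\rho\cos\alpha' + \rho^2)^{(3+\gamma)/2}}\,d\alpha'.
\end{equation*}
The symmetry $\alpha' \mapsto \alpha' + \pi$ shows $J$ is odd, hence $J(\rho) = c_1 \rho + c_3 \rho^3 + c_5 \rho^5 + \cdots$, and therefore
\begin{equation*}
K(r,\tilde r) = c_1\,\tilde r^{-(2+\gamma)} + c_3\, r^2\, \tilde r^{-(4+\gamma)} + c_5\, r^4\, \tilde r^{-(6+\gamma)} + \cdots.
\end{equation*}
Differentiating at $r=1$ gives $K_1(\tilde r) = 2c_3\,\tilde r^{-(4+\gamma)} + 4c_5\,\tilde r^{-(6+\gamma)} + \cdots$ and $K_2(\tilde r) = 2c_3\,\tilde r^{-(4+\gamma)} + 12c_5\,\tilde r^{-(6+\gamma)} + \cdots$. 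Any proportionality $K_1 = \lambda K_2$ would force $\lambda = 1$ from the leading order and then $c_5 = 0$ from the next. A short computation using the Gegenbauer generating function identity $\int_{-\pi}^{\pi}(1-2\rho\cos\alpha'+\rho^2)^{-\tau}d\alpha' = 2\pi\sum_{k\ge 0}\frac{(\tau)_k^2}{(k!)^2}\rho^{2k}$ applied to $J = -\frac{1}{2\tau} F'$ with $\tau = (1+\gamma)/2$ shows that all the nonzero coefficients $c_{2j+1}$ are strictly nonzero for $\gamma \in (0,1)$; in particular $c_5 \neq 0$. This rules out any proportionality, hence $L$ is surjective.

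The main obstacle is the cancellation hidden in the numerator $r - \tilde r \cos\alpha'$: the expansion of $K$ starts at order $\tilde r^{-(2+\gamma)}$ with an $r$-independent coefficient, so the differentiated kernels $K_1, K_2$ first see information only at subleading orders, and one must carry the expansion two further orders to separate them. A fully constructive alternative that avoids the asymptotics altogether is to exhibit two concrete bumps $g_1, g_2$ supported in disjoint subintervals of $(2,\infty)$ and check by direct computation that the $2\times 2$ matrix $(a_i(g_j))_{i,j=1,2}$ has nonzero determinant.
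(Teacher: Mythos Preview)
Your argument is correct. The reduction to surjectivity of the linear map $L(g)=(a_1(g),a_2(g))$, the integral representation with the smooth kernel $K(r,\tilde r)=(r\tilde r^{1+\gamma})^{-1}J(r/\tilde r)$, the oddness of $J$ via $\alpha'\mapsto\alpha'+\pi$, and the identification of the Taylor coefficients through $J=-\frac{1}{2\tau}F'$ with $F(\rho)=2\pi\sum_{k\ge0}\frac{(\tau)_k^2}{(k!)^2}\rho^{2k}$, $\tau=\frac{1+\gamma}{2}$, all check out. In particular $c_3,c_5\neq 0$ for $\gamma\in(0,1)$, so the leading two terms of $K_1$ and $K_2$ rule out any proportionality and $L$ is onto.

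The paper does not actually prove this lemma; it simply cites the analogous Lemma~2.5 in \cite{Zoroacordoba} and says the proof is ``completely equivalent''. Your proposal therefore supplies strictly more than the paper does: a self-contained proof specific to the $\gamma$-SQG kernel rather than a reference. The asymptotic route via the hypergeometric/Gegenbauer identity is clean and gives the nonvanishing of all odd coefficients at once; the constructive alternative you mention at the end (picking two explicit bumps and checking a $2\times2$ determinant) is closer in spirit to how such lemmas are often dispatched in the literature, and would also work, but your main argument is already complete without it.
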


We will omit the proof of this lemma since it is completely equivalent to that of lemma 2.5 in \cite{Zoroacordoba}. With this, the existence of the desired $f_{1}$ is easy to prove, since we can just choose some $C^{\infty}$ $\tilde{f}$ with support in $(\frac{1}{2},2)$ with the desired derivative in $r\in[\frac34,\frac54]$ and then add some other $C^{\infty}$ function given by lemma \ref{valphaarb} to cancel out the derivatives of $V_{\alpha,\gamma}$ around $r=1$.

Our next goal will be to prove that this family of pseudo-solutions is a good approximation for our solutions. For this  we define $\bar{v}_{r,\gamma}$ as

$$\bar{v}^{pol}_{r,\gamma}(f_{2}(N^{1-\delta}(r-1),N^{1-\delta}\alpha+c_{1} )\cos(NK\alpha+c_{2}))(r,\alpha)$$
$$:=(NK)^{\gamma} C_{\gamma}f_{2}(N^{1-\delta}(r-1),N^{1-\delta}\alpha+c_{1}) \sin (NK\alpha+c_{2}),$$

$$\bar{v}_{r,\gamma}(f(r))=0.$$

We will only use this definition for ease of notation and we will only apply this operator to our pseudo-solution so we do not have to worry about defining this for a more general function.

With this, the evolution equation for $\bar{w}^{pol}_{\lambda,N,M,J,L,\tilde{t}}$
is

\begin{align*}
    &\frac{\partial \bar{w}^{pol}_{\lambda,N,M,J,L,\tilde{t}}}{\partial t}\\
    &= -v^{pol}_{\alpha,\gamma}(\lambda_{0} f_{1})(r=1)\frac{\partial \bar{w}^{pol}_{\lambda,N,M,J,L,\tilde{t}}}{\partial \alpha}-\lambda_{0}\bar{v}^{pol}_{r,\gamma}(\bar{w}^{pol}_{\lambda,N,M,J,L,\tilde{t}})\\
    &=-v^{pol}_{\alpha,\gamma}(\lambda_{0} f_{1})(r=1)\frac{\partial \bar{w}^{pol}_{\lambda,N,M,J,L,\tilde{t}}}{\partial \alpha}-\frac{\partial \lambda_{0} f_{1}(r)}{\partial r}\bar{v}^{pol}_{r,\gamma}(\bar{w}^{pol}_{\lambda,N,M,J,L,\tilde{t}})\\
\end{align*}

while on the other hand, if $w_{\lambda,N,M,J,L,\tilde{t}}$ is the solution to $\gamma$-SQG with the same initial conditions as $\bar{w}_{\lambda,N,M,J,L,\tilde{t}}$ 
then
\begin{align*}
    &\frac{\partial w^{pol}_{\lambda,N,M,J,L,\tilde{t}}}{\partial t}\\
    &=-\frac{v^{pol}_{\alpha,\gamma}(w^{pol}_{\lambda,N,M,J,L,\tilde{t}})}{r}\frac{\partial w^{pol}_{\lambda,N,M,J,L,\tilde{t}}}{\partial \alpha}-\frac{\partial w^{pol}_{\lambda,N,M,J,L,\tilde{t}}}{\partial r}v^{pol}_{r,\gamma}(w^{pol}_{\lambda,N,M,J,L,\tilde{t}})\\
\end{align*}

and we can rewrite the evolution equation of $w^{pol}_{\lambda,N,M,J,L,\tilde{t}}$ in pseudo-solution form as

\begin{align*}
    &\frac{\partial \bar{w}^{pol}_{\lambda,N,M,J,L,\tilde{t}}}{\partial t}\\
    &=-\frac{v^{pol}_{\alpha,\gamma}(\bar{w}^{pol}_{\lambda,N,M,J,L,\tilde{t}})}{r}\frac{\partial \bar{w}^{pol}_{\lambda,N,M,J,L,\tilde{t}}}{\partial \alpha}-\frac{\partial \bar{w}^{pol}_{\lambda,N,M,J,L,\tilde{t}}}{\partial r}v_{r,\gamma}(\bar{w}^{pol}_{\lambda,N,M,J,L,\tilde{t}})-F^{pol}_{\lambda,N,M,J,L,\tilde{t}}\\    
\end{align*}

with

$$F^{pol}_{\lambda,N,M,J,L,\tilde{t}}=F^{pol}_{1}+F^{pol}_{2}+F^{pol}_{3}+F^{pol}_{4},$$

$$F^{pol}_{1}:=\frac{v^{pol}_{\alpha,\gamma}( \lambda_{0} f_{1}-\bar{w}^{pol}_{\lambda,N,M,J,L,\tilde{t}})}{r}\frac{\partial \bar{w}^{pol}_{\lambda,N,M,J,L,\tilde{t}}}{\partial \alpha}$$
$$F^{pol}_{2}:=(v^{pol}_{\alpha,\gamma}( \lambda_{0} f_{1})(r=1)-\frac{v^{pol}_{\alpha,\gamma}( \lambda_{0} f_{1})}{r})\frac{\partial \bar{w}^{pol}_{\lambda,N,M,J,L,\tilde{t}}}{\partial \alpha}$$
$$F^{pol}_{3}:=\frac{\partial (\lambda_{0} f_{1}(r)-\bar{w}^{pol}_{\lambda,N,M,J,L,\tilde{t}})}{\partial r}v_{r,\gamma}(\bar{w}^{pol}_{\lambda,N,M,J,L,\tilde{t}})$$
$$F^{pol}_{4}:=\frac{\partial \lambda_{0} f_{1}(r)}{\partial r}\big(\bar{v}_{r,\gamma}(\bar{w}^{pol}_{\lambda,N,M,J,L,\tilde{t}})-v_{r,\gamma}(\bar{w}^{pol}_{\lambda,N,M,J,L,\tilde{t}})\big).$$

The next step in our proof will be to show that $F_{\lambda,N,M,J,L,\tilde{t}}$ can be made as small as we need by choosing appropriately the parameters, namely we will show that it becomes small as we make $N$ big.

Before we get to prove that, there are some basic properties of $\bar{w}_{\lambda,N,M,J,L,\tilde{t}}$ that we will need later on

\begin{itemize}
    \item $$||\bar{w}^{pol}_{\lambda,N,M,J,L,\tilde{t}}(r,\alpha,t)||_{C^{m,\beta'}}\leq C_{1}+C_{2} \lambda(NM)^{m+\beta'-k-\beta}$$ 
    $$||\bar{w}^{pol}_{\lambda,N,M,J,L,\tilde{t}}(r,\alpha,t)-\lambda_{0} f_{1}(r)||_{C^{m,\beta'}}\leq C_{2} \lambda(NM)^{m+\beta'-k-\beta}$$
    for any $m\in\mathds{N}$, $\beta'\in[0,1]$, $t\in\mathds{R}$, with $C_{1}$ and $C_{2}$ depending on $m$ and $\beta'$.
    \item $$||\bar{w}^{pol}_{\lambda,N,M,J,L,\tilde{t}}(r,\alpha,t)||_{H^{m}}\leq C_{1}+C_{2} \lambda N^{-1+\delta}(NM)^{m-k-\beta}$$
    $$||\bar{w}^{pol}_{\lambda,N,M,J,L,\tilde{t}}(r,\alpha,t)-\lambda_{0} f_{1}(r)||_{H^{m}}\leq C_{2} \lambda N^{-1+\delta}(NM)^{m-k-\beta}$$
    for any $m\in\mathds{N}$, $\beta'\in[0,1]$, $t\in\mathds{R}$, with $C_{1}$ and $C_{2}$ depending on $m$.
    \item $$||\bar{w}_{\lambda,N,M,J,L,\tilde{t}}(x_{1},x_{2},t)||_{C^{m,\beta'}}\leq C_{1}+C_{2} \lambda(NM)^{m+\beta'-k-\beta}$$
    $$||\bar{w}_{\lambda,N,M,J,L,\tilde{t}}(x_{1},x_{2},t)-\lambda_{0} f_{1}(\sqrt{x_{1}^2+x_{2}^{2}})||_{C^{m,\beta'}}\leq C_{2} \lambda(NM)^{m+\beta'-k-\beta}$$ 
    for any $m\in\mathds{N}$, $\beta'\in[0,1]$, $t\in\mathds{R}$, with $C_{1}$ and $C_{2}$ depending on $m$ and $\beta'$.
    \item $$||\bar{w}_{\lambda,N,M,J,L,\tilde{t}}(x_{1},x_{2},t)||_{H^{m}}\leq C_{1}+C_{2} \lambda N^{-1+\delta}(NM)^{m-k-\beta}$$
    $$||\bar{w}_{\lambda,N,M,J,L,\tilde{t}}(x_{1},x_{2},t)-\lambda_{0} f_{1}(\sqrt{x_{1}^2+x_{2}^{2})}||_{H^{m}}\leq C_{2} \lambda N^{-1+\delta}(NM)^{m-k-\beta}$$
    for any $m\in\mathds{N}$, $\beta'\in[0,1]$, $t\in\mathds{R}$, with $C_{1}$ and $C_{2}$ depending on $m$.
     \item  By using the interpolation inequality for sobolev spaces we also have
     $$||\bar{w}_{\lambda,N,M,J,L,\tilde{t}}(x_{1},x_{2},t)-\lambda_{0} f_{1}(x_{1}^2+x_{2}^{2})||_{H^{m}}\leq C_{1} \lambda N^{-1+\delta}(NM)^{m-k-\beta}$$
    for any $m>0$, $t\in\mathds{R}$, with  $C_{1}$ depending on $m$.
\end{itemize}

The bounds in polar coordinates are obtained by direct calculation and then we obtain from those the ones in cartesian coordinates using that the functions are compactly supported and with support far from the origin. Now, for our pseudo-solutions to be a useful approximation of the solution to $\gamma$-SQG, we need the source term to be small. For that we have the following lemmas.
\begin{lemma}
For any fixed $T$, if $0\leq t\leq T$  we have that

$$||F_{\lambda,N,M,J,L,\tilde{t}}||_{L^{2}}\leq  (1+\frac{1}{\tilde{t}})\frac{C}{N^{k+\beta+1}}$$
with $C$ depending on $T,\lambda,M,J$ and $L$.

Furthermore, for $m\in\mathds{N}$, we have that

$$||F_{\lambda,N,M,J,L,\tilde{t}}||_{H^{m}}\leq C(1+\frac{1}{\tilde{t}})\frac{N^{m}}{N^{k+\beta+1}}$$
with $C_{m}$ depending on $T,\lambda,M,J,L$ and $m$. In fact, by interpolation, the inequality also holds for any $m>0$.

\end{lemma}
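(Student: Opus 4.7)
The plan is to bound each of the four terms $F_1, F_2, F_3, F_4$ separately in $L^2$, then extend to $H^m$ by distributing derivatives and tracking powers of the dominant frequency $NM$. For every term I split the spatial domain into the near-field, consisting of the polar support of the perturbation $\bar w - \lambda_0 f_1$, which has area of order $N^{-2+2\delta}$, and the far-field, where the perturbation itself vanishes but the nonlocal velocity it generates does not. On the near-field, I would apply lemma \ref{errorvsmall} together with the basic bounds for $\bar w$ and its derivatives listed above to estimate each factor in $L^\infty$ and then convert to $L^2$ by multiplying by the square root of the area. On the far-field, I would use lemma \ref{decaimiento} to control the velocity pointwise and integrate the resulting bound directly.

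For $F_1$ the key input is that $v_{\alpha,\gamma}$ applied to the perturbation is small: lemma \ref{errorvsmall} gives a near-field bound of the form $C\lambda N^{\gamma-\delta-k-\beta}$ for $|v_{\alpha,\gamma}(\bar w - \lambda_0 f_1)|$, to be paired with a bound of order $\lambda N^{1-k-\beta}$ for $|\partial_\alpha \bar w|$. For $F_2$, the factor $v_{\alpha,\gamma}(\lambda_0 f_1)(1)-v_{\alpha,\gamma}(\lambda_0 f_1)/r$ is of order $\lambda_0 |r-1|^3$ by Taylor expansion, using that its first two $r$-derivatives vanish at $r=1$ by construction of $f_1$; since $|r-1|\leq CN^{-1+\delta}$ on the support of $\partial_\alpha\bar w$ and $\lambda_0\sim 1/\tilde t$, this is where the $1/\tilde t$ contribution first appears. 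For $F_3$ one uses that $\partial_r(\lambda_0 f_1-\bar w)=-\partial_r(\mathrm{pert})$ is supported on the perturbation and that $v_{r,\gamma}(\bar w)=v_{r,\gamma}(\mathrm{pert})$, because a radial function produces purely tangential velocity. For $F_4$, on the near-field lemma \ref{errorvsmall} directly bounds $|\bar v_{r,\gamma}-v_{r,\gamma}|$ by $C\lambda N^{\gamma-\delta-k-\beta}$, and the $\lambda_0\sim 1/\tilde t$ prefactor once again supplies the $1/\tilde t$ dependence. In every case the hypothesis $k+\beta>1+2\delta+\gamma$ is what guarantees that the resulting product is dominated by $N^{-(k+\beta+1)}$ with room to spare.

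For the far-field, which is genuinely needed only for $F_4$ (because in $F_1, F_2, F_3$ one of the factors is $\partial_\alpha\bar w$ or $\partial_r\mathrm{pert}$, which vanishes off the perturbation's support, whereas $\partial_r f_1$ in $F_4$ is supported on the full annulus $\{3/4\leq r\leq 5/4\}$), lemma \ref{decaimiento} provides a pointwise bound of the form $C\lambda (NM)^{-2+\delta-k-\beta}/d(x,\mathrm{pert})^{2+\gamma}$ for $v_{r,\gamma}(\mathrm{pert})(x)$. Squaring this bound and integrating over $\{d\geq N^{-1+\delta}\}$ produces a factor of order $N^{(1-\delta)(1+\gamma)}$, and combining with $\lambda_0\sim M^{1-\gamma}/(N^\gamma\tilde t)$ leaves the overall exponent $-(k+\beta+1)-\gamma\delta$, which is admissible. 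For the $H^m$ bound, I would use the same decomposition with $m$ spatial derivatives distributed by Leibniz; each derivative landing on a perturbation factor or on one of its velocities costs at most a factor $NM$, coming from the dominant oscillation $\cos(N(M_j+l)(\alpha-\ldots))$ (the envelope derivatives being smaller at scale $N^{1-\delta}$), while in the far-field the differentiated decay estimates at the end of lemma \ref{decaimiento} supply the corresponding $N^j$ growth. Interpolation to fractional $m>0$ then follows from the standard Sobolev interpolation inequality.

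The main technical obstacle is the far-field estimate for $F_4$: one must verify that, even though $\partial_r f_1$ is supported on a large annulus and the velocity decay in lemma \ref{decaimiento} is only $d^{-2-\gamma}$, the final bound still attains the rate $N^{-(k+\beta+1)}$. This is where the scaling $\lambda_0\sim N^{-\gamma}/\tilde t$ becomes essential: it provides exactly the $N^{-\gamma}$ factor needed to absorb the extra $N^{\gamma(1-\delta)}$ produced by integrating the slow velocity decay over the annulus, and in doing so it also accounts for the $(1+1/\tilde t)$ prefactor that appears in the stated bound.
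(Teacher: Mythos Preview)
Your plan is correct and tracks the paper's proof closely, especially for $F_2$ and $F_4$ (including the near/far split and the role of $\lambda_0\sim N^{-\gamma}/\tilde t$ in absorbing the slow far-field decay). Two points of comparison are worth noting.

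First, for $F_1$ and $F_3$ the paper does \emph{not} invoke lemma~\ref{errorvsmall}; it simply uses the global Sobolev bound $\|v_{i,\gamma}(w)\|_{L^2}\le C\|w\|_{H^\gamma}$ on the perturbation and pairs it with an $L^\infty$ bound on the other factor. This is shorter than your near-field $L^\infty$ route, though yours works as well and in fact gives a slightly better exponent.

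Second, your $H^m$ sketch (``distribute derivatives by Leibniz, each costs $NM$'') is fine for $F_1,F_2,F_3$, but for $F_4$ it hides a genuine step: Cartesian derivatives do \emph{not} commute with $v_{r,\gamma}$, so Leibniz alone does not reduce $\partial^m F_4$ to a sum of terms to which lemma~\ref{errorvsmall} applies. The paper handles this by first using the commutator estimate (\ref{dercomr}) to replace $\partial^m v_{r,\gamma}(\bar w)$ by $v_{r,\gamma}(\partial^m \bar w)$ plus lower-order terms controlled in $H^{m-1}$, then passes to polar derivatives and applies lemma~\ref{errorvsmall} to each resulting summand. Your outline would need this ingredient to close; once inserted, the rest of your $H^m$ argument goes through as stated.
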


\begin{proof}
We start by obtaining bounds for $||F_{1}||_{L^{2}}$. We have that
\begin{align*}
    &||F_{1}||_{L^{2}}\leq ||v_{\alpha,\gamma}( \lambda_{0} f_{1}-\bar{w}_{\lambda,N,M,J,L,\tilde{t}})1_{|x|\geq \frac12}||_{L^{2}}||\frac{1}{r}\frac{\partial \bar{w}_{\lambda,N,M,J,L,\tilde{t}}}{\partial \alpha}||_{L^{\infty}}\\
    &\leq C ||\lambda_{0} f_{1}-\bar{w}_{\lambda,N,M,J,L,\tilde{t}}||_{H^{\gamma}}||\frac{\partial \bar{w}_{\lambda,N,M,J,L,\tilde{t}}}{\partial \alpha}||_{L^{\infty}}\\
    &\leq C \frac{N^{\gamma}}{N^{k+\beta+1-\delta}}\frac{1}{N^{k+\beta-1}}\leq C \frac{1}{N^{k+\beta +1}}.\\
\end{align*}

For $F_{2}$, using that the first two derivatives with respect to $r$ of $\frac{v_{\alpha,\gamma}(\lambda_{0}f_{1})}{r}$ vanish at $r=1$ plus the fact that it is a radial function, we have that if $x\in supp(\frac{\partial \bar{w}_{\lambda,N,M,J,L,\tilde{t}}}{\partial \alpha})$ then 
$$|v^{pol}_{\alpha,\gamma}( \lambda_{0} f_{1})(r=1)-\frac{v^{pol}_{\alpha,\gamma}( \lambda_{0} f_{1})}{r}|\leq C N^{-3+3\delta}$$ 
so

\begin{align*}
    &||F^{pol}_{2}||_{L^{2}}\leq C\lambda_{0}N^{-3+3\delta}||\frac{\partial \bar{w}^{pol}_{\lambda,N,M,J,L,\tilde{t}}}{\partial \alpha}||_{L^{2}}\leq C\frac{N^{-3+4\delta}}{N^{k+\beta}}\leq \frac{C}{N^{k+\beta+1}}\\
\end{align*}

Similarly, for $F_{3}$ we have

\begin{align*}
    &||F_{3}||_{L^{2}}\leq ||\frac{\partial (\bar{w}^{pol}_{\lambda,N,M,J,L,\tilde{t}}-\lambda_{0} f_{1}(r))}{\partial r}||_{L^{\infty}}||v_{r,\gamma}(\bar{w}^{pol}_{\lambda,N,M,J,L,\tilde{t}})1_{|x|\geq \frac12}||_{L^{2}}\\
    &\leq ||\frac{\partial (\bar{w}^{pol}_{\lambda,N,M,J,L,\tilde{t}}-\lambda_{0} f_{1}(r))}{\partial r}||_{L^{\infty}}||v_{r,\gamma}(\bar{w}^{pol}_{\lambda,N,M,J,L,\tilde{t}}-\lambda_{0} f_{1}(r))||_{H^{\gamma}}\\
    &\leq C \frac{N^{1-\delta}}{N^{k+\beta}} \frac{N^{\gamma}}{N^{k+\beta+1-\delta}}\leq C \frac{1}{N^{k+\beta+1 }}.\\
\end{align*}

Finally, for $F_{4}$, we go back to cartesian coordinates and divide the integral in two different parts, $A_{1}:=B_{2N^{-1+\delta}}(\cos(t\lambda_{0} v_{\alpha,\gamma}(f_{1})(r=1)),\sin(t\lambda_{0} v_{\alpha,\gamma}(f_{1})(r=1)))$ $A_{2}:=supp(\bar{w}_{\lambda,N,M,J,L,\tilde{t}})\setminus A_{1}$ we have

\begin{align*}
    &||F_{4}||_{L^{2}}\\
    &\leq ||\frac{\partial \lambda_{0} f_{1}}{\partial r}\big(\bar{v}_{r,\gamma}(\bar{w}_{\lambda,N,M,J,L,\tilde{t}})-v_{r,\gamma}(\bar{w}_{\lambda,N,M,J,L,\tilde{t}})\big)1_{A_{1}}||_{L^{2}}\\
    &+ ||\frac{\partial \lambda_{0} f_{1}}{\partial r}\big(\bar{v}_{r,\gamma}(\bar{w}_{\lambda,N,M,J,L,\tilde{t}})-v_{r,\gamma}(\bar{w}_{\lambda,N,M,J,L,\tilde{t}})\big)1_{A_{2}}||_{L^{2}}.\\
\end{align*}

For the bound on $A_{1}$, using lemma \ref{errorvsmall} and $|\frac{\partial  f_{1}}{\partial r}|\leq C$ we get

\begin{align*}
    &||\frac{\partial \lambda_{0} f_{1}(r)}{\partial r}\big(\bar{v}_{r,\gamma}(\bar{w}_{\lambda,N,M,J,L,\tilde{t}})-v_{r,\gamma}(\bar{w}_{\lambda,N,M,J,L,\tilde{t}})\big)1_{A_{1}}||_{L^{2}}\\
    &\leq || \lambda_{0} f_{1}(r)||_{C^1}||\bar{v}_{r,\gamma}(\bar{w}_{\lambda,N,M,J,L,\tilde{t}})-v_{r,\gamma}(\bar{w}_{\lambda,N,M,J,L,\tilde{t}})\big)1_{A_{1}}||_{L^{\infty}}|A_{1}|^{\frac12}\\
    &\leq C\lambda_{0} \frac{N^{\gamma-\delta}}{N^{k+\beta+1-\delta}}= C\frac{1}{\tilde{t}N^{k+\beta+1}}\\
\end{align*}
where we used that $\lambda_{0}=\frac{CN^{-\gamma}}{\tilde{t}}$ (the constant $C$ depending on $M$).

For the integral in $A_{2}$ using lemma \ref{decaimiento} and the bounds on $f_{1}$ we have

$$\Big(\int_{A_{2}}(\frac{\partial \lambda_{0} f_{1}}{\partial r}\big(\bar{v}_{r,\gamma}(\bar{w}_{\lambda,N,M,J,L,\tilde{t}})-v_{r,\gamma}(\bar{w}_{\lambda,N,M,J,L,\tilde{t}})\big))^{2}dx_{1}dx_{2}\Big)^{\frac12}$$
$$\leq \tilde{t}^{-1}C N^{-\gamma} \Big(\int_{A_{2}}(v_{r,\gamma}(\bar{w}_{\lambda,N,M,J,L,\tilde{t}}))^{2}dx_{1}dx_{2}\Big)^{\frac12}$$
$$\leq \tilde{t}^{-1}\frac{C}{N^{k+\beta+\gamma}} \Big(\int_{2N^{-1+\delta}}^{\infty}\bigg(N^{-2+\delta}\frac{C}{h^{2+\gamma}}\bigg)^{2}hdh\Big)^{\frac12}$$
$$\leq \frac{C}{\tilde{t}N^{k+\beta+1}} $$

For the proof for the bound in $H^{m}$, we use the that, since $supp(w^{pol}_{\lambda,N,M,J,L,\tilde{t}})\subset\{(r,\alpha): r\in [\frac12,K]\}$ for some $K$, then

$$||w_{\lambda,N,M,J,L,\tilde{t}}||_{H^{m}}\leq ||w^{pol}_{\lambda,N,M,J,L,\tilde{t}}||_{H^{m}}$$
and therefore we just need to find bound for

$$\sum_{k=0}^{m}\sum_{j=0}^{k}||\frac{\partial^{k}F_{i}}{\partial^{j}r\partial^{k-j}\alpha}||_{L^{2}}$$
with $i=1,2,3,4$.

For the bounds in $H^{m}$ we will use that, given two functions $f,g$ and $m\in\mathds{Z}$ we have

$$||fg||_{H^{m}}\leq C \sum_{i=0}^{m}||f||_{C^{i}}||g||_{H^{m-i}}$$
with $C$ depending on $m$. Combining this with (\ref{dercomalpha}) we have

\begin{align*}
    &||F_{1}||_{H^{m}}\leq C\sum_{i=0}^{m}||v_{\alpha,\gamma}( \lambda_{0} f_{1}-\bar{w}^{pol}_{\lambda,N,M,J,L,\tilde{t}})1_{|x|\geq \frac12}||_{H^{i}}||\frac{1}{r}\frac{\partial \bar{w}^{pol}_{\lambda,N,M,J,L,\tilde{t}}}{\partial \alpha}||_{C^{m-i}}\\
    &\leq C \sum_{i=0}^{m}\frac{N^{i}N^{-1+\delta+\gamma}}{N^{k+\beta}}\frac{N^{m-i+1}}{N^{k+\beta}}\leq C\frac{N^{m}}{N^{k+\beta+1}}.\\
\end{align*}

For $F_{2}$, using that, for $r\in B^{pol}:=supp(\frac{\partial \bar{w}^{pol}_{\lambda,N,M,J,L,\tilde{t}}}{\partial \alpha})$ we have that 
$$\frac{\partial^{i}(v^{pol}_{\alpha,\gamma}( \lambda_{0} f_{1})(r=1)-\frac{v^{pol}_{\alpha,\gamma}( \lambda_{0} f_{1})}{r})}{\partial r^{i}}\leq C N^{(3-i)(-1+\delta)} $$
for $i=0,1,2$, and since $v^{pol}_{\alpha,\gamma}( \lambda_{0} f_{1})(r=1)-\frac{v^{pol}_{\alpha,\gamma}( \lambda_{0} f_{1})}{r}$ only depends on $r$, then, for $i=0,1,2$
$$||v_{\alpha,\gamma}( \lambda_{0} f_{1})(r=1)-\frac{v^{pol}_{\alpha,\gamma}( \lambda_{0} f_{1})}{r}1_{x\in B}||_{C^{i}}\leq CN^{(3-i)(-1+\delta)}$$
and for higher derivatives we just use
$$||v_{\alpha,\gamma}( \lambda_{0} f_{1})(r=1)-\frac{v^{pol}_{\alpha,\gamma}( \lambda_{0} f_{1})}{r}1_{x\in B}||_{C^{i}}\leq C,$$
where the constant depends on $i$. With this we get

\begin{align*}  
&||F_{2}||_{H^{m}}\leq C \sum_{i=0}^{m}||v_{\alpha,\gamma}( \lambda_{0} f_{1})(r=1)-\frac{v_{\alpha,\gamma}( \lambda_{0} f_{1})}{r} 1_{x\in B}||_{C^{i}}||\frac{\partial \bar{w}_{\lambda,N,M,J,L,\tilde{t}}}{\partial \alpha}||_{H^{m-i}}\\
&\leq C \frac{N^{-3+4\delta+m}}{\tilde{t}N^{k+\beta}}\leq \frac{C N^{m}}{\tilde{t}N^{k+\beta+1}}.\\
\end{align*}

For $F_{3}$ we have

\begin{align*}
    &||F_ {3}||_{H^{m}}\leq C\sum_{i=0}^{m}||\frac{\partial (\bar{w}_{\lambda,N,M,J,L,\tilde{t}}-\lambda_{0} f_{1}(r))}{\partial r}||_{C^{i}}||v_{r,\gamma}(\bar{w}_{\lambda,N,M,J,L,\tilde{t}})||_{H^{m-i}}\\
    &\leq C\sum_{i=0}^{m}\frac{N^{i+1}}{N^{k+\beta}}\frac{N^{m-i-1+\delta+\gamma}}{N^{k+\beta}}\leq C \frac{N^{m}}{N^{k+\beta+1}}.\\
\end{align*}

As for $F_{4}$, the contribution obtained when integrating in $A_{2}$ is obtained again applying lemma \ref{decaimiento}
\begin{align*}
    &||F_{4}1_{A_{2}}||_{H^{m}}\\
    &\leq C\sum_{i} || \lambda_{0} f_{1}(r)||_{C^{i+1}}||\bar{v}_{r,\gamma}(\bar{w}_{\lambda,N,M,J,L,\tilde{t}})-v_{r,\gamma}(\bar{w}_{\lambda,N,M,J,L,\tilde{t}})\big)1_{A_{1}}||_{C^{m-i}}|A_{1}|^{\frac12}\\
    &\leq C\lambda_{0} \frac{N^{m+\gamma-\delta}}{N^{k+\beta+1-\delta}}= C\frac{N^{m}}{\tilde{t}N^{k+\beta+1}}.\\
\end{align*}

 For the contribution when we integrate $F_{4}$ over $A_{1}$ using (\ref{dercomr}) we have

\begin{align*}
    &||F_{4} 1_{x\in A_{1}}||_{H^{m}}\leq C\lambda_{0} ||(\bar{v}_{r,\gamma}(\bar{w}_{\lambda,N,M,J,L,\tilde{t}})-v_{r,\gamma}(\bar{w}_{\lambda,N,M,J,L,\tilde{t}}))1_{x\in A_{1}}||_{H^{m}}\\
    &\leq C\lambda_{0} \sum_{q=0}^{m}\sum_{j=0}^{q}||\Big(\frac{\partial^{q} \bar{v}_{r,\gamma}(\bar{w}_{\lambda,N,M,J,L,\tilde{t}})}{\partial x_{1}^{j}\partial x_{2}^{q-j}}-v_{r,\gamma}(\frac{\partial^{q} \bar{w}_{\lambda,N,M,J,L,\tilde{t}}}{\partial x_{1}^{j}\partial x_{2}^{q-j}})\Big) 1_{x\in A_{1}}||_{L^{2}}\\
    &+C\lambda_{0}||v_{1,\gamma}(\bar{w}_{\lambda,N,M,J,L,\tilde{t}})1_{x\in A_{1}}||_{H^{m-1}}+C\lambda_{0}||v_{2,\gamma}(\bar{w}_{\lambda,N,M,J,L,\tilde{t}})1_{x\in A_{1}}||_{H^{m-1}}\\
    &\leq C\lambda_{0} \sum_{q=0}^{m}\sum_{j=0}^{q}||\Big(\frac{\partial^{q} \bar{v}_{r,\gamma}(\bar{w}_{\lambda,N,M,J,L,\tilde{t}})}{\partial x_{1}^{j}\partial x_{2}^{q-j}}-v_{r,\gamma}(\frac{\partial^{q} \bar{w}_{\lambda,N,M,J,L,\tilde{t}}}{\partial x_{1}^{j}\partial x_{2}^{q-j}})\Big) 1_{x\in A_{1}}||_{L^{2}}\\
    &+C\frac{N^{-1+\delta}}{\tilde{t}N^{k+\beta}}N^{m-1}\\
\end{align*}

But then since

$$\frac{\partial^{q}f(r,\alpha)}{\partial x_{1}^{j}\partial x_{2}^{q-j}}=\sum_{p=0}^{q}\sum_{l=0}^{p}g_{q,j,p,l}(r,\alpha)\frac{\partial^{p}f(r,\alpha)}{\partial r^{l}\partial \alpha^{p-l}}$$
with $g_{m,j,q,l}$ in $C^{\infty}$ and bounded if $r\geq \frac12$, we have that

\begin{align*}
    &||\Big(\frac{\partial^{q} \bar{v}_{r,\gamma}(\bar{w}_{\lambda,N,M,J,L,\tilde{t}})}{\partial x_{1}^{j}\partial x_{2}^{q-j}}-v_{r,\gamma}(\frac{\partial^{q} \bar{w}_{\lambda,N,M,J,L,\tilde{t}}}{\partial x_{1}^{j}\partial x_{2}^{q-j}})\Big) 1_{x\in A_{1}}||_{L^{2}}\\
    \leq& \sum_{p=0}^{q}\sum_{l=0}^{p}||\Big(g_{q,j,p,l}(r,\alpha)\frac{\partial^{p}\bar{v}^{pol}_{r,\gamma}(\bar{w}_{\lambda,N,M,J,L,\tilde{t}})}{\partial r^{l}\partial \alpha^{p-l}}-v^{pol}_{r,\gamma}\big(g_{q,j,p,l}(r,\alpha)\frac{\partial^{p} \bar{w}_{\lambda,N,M,J,L,\tilde{t}}}{\partial r^{l}\partial \alpha^{p-l}}\big)\Big)1_{(r,\alpha)\in A^{pol}_{1}}||_{L^{2}}.\\
\end{align*}

But applying lemma \ref{errorvsmall} to each of the terms we obtain after differentiating, we get

\begin{align*}
    \leq& \sum_{p=0}^{q}\sum_{l=0}^{p}||\Big(g_{q,j,p,l}(r,\alpha)\frac{\partial^{p}\bar{v}^{pol}_{r,\gamma}(\bar{w}_{\lambda,N,M,J,L,\tilde{t}})}{\partial r^{l}\partial \alpha^{p-l}}-v^{pol}_{r,\gamma}\big(g_{q,j,p,l}(r,\alpha)\frac{\partial^{p} \bar{w}_{\lambda,N,M,J,L,\tilde{t}}}{\partial r^{l}\partial \alpha^{p-l}}\big)\Big)1_{(r,\alpha)\in A^{pol}_{1}}||_{L^{2}}\\
    &\leq  C\frac{N^{q}N^{\gamma-\delta}N^{-1+\delta}}{N^{k+\beta}}\\
\end{align*}
so

\begin{align*}
    &||F_{4} 1_{x\in A_{1}}||_{H^{m}}\leq C\frac{N^{-1+\delta}}{\tilde{t}N^{k+\beta}}N^{m-1}+C\lambda_{0} \sum_{q=0}^{m}\sum_{j=0}^{q}\frac{N^{q}N^{\gamma-\delta}N^{-1+\delta}}{N^{k+\beta}}\leq \frac{C N^{m}}{\tilde{t} N^{k+\beta+1}}\\
\end{align*}
and we are done.
\end{proof}

Since we are interested in showing (arbitrarily) fast norm growth for $\gamma$-SQG, our solution should start with a very small norm that gets very big after a short period of time. Lemma \ref{decaimiento} already gives us tools to show that the initial norm is small, and the next lemma will gives us a lower bound for the $C^{k,\beta}$ norm of our pseudo-solutions at time $\tilde{t}$.

\begin{lemma}\label{crecimientockbeta}
There exists a set $A$ (depending on $\lambda,N,M,J$ and $L$) such that, if $x\in A$ then there exists unitary $u$ depending on $x$  and a constant $C$ with

$$|\frac{\partial^{k}(\bar{w}_{\lambda,N,M,J,L,\tilde{t}}(x,\tilde{t})-\lambda_{0}f_{1})}{\partial u^{k}}|\geq  \lambda( \frac{1 }{2(MN)^{\beta}}-\frac{CL^{2}  }{(NM)^{\beta}M}-C(NM)^{-(\delta+\beta)}-C(NM)^{-\beta}N^{-1+\delta})  $$

and a set $B$ (depending on $\lambda,N,M,J$ and $L$) such that if $x\in B$ then for all unitary $v$ we have that
$$|\frac{\partial^{k}(\bar{w}_{\lambda,N,M,J,L,\tilde{t}}(x,\tilde{t})-\lambda_{0}f_{1})}{\partial u^{k}}|\leq  \lambda( \frac{1 }{4(MN)^{\beta}}+ C(NM)^{-(\delta+\beta)}+\frac{CL^{2}  }{(NM)^{\beta}M})$$
furthermore, there is a set $S_{M,N,\delta}$ with $|S_{M,N,\delta}|\geq C_{1}MN^{2\delta}$,

$$A=\cup_{s\in S_{M,N,\delta}} A_{s}$$

$$B=\cup_{s\in S_{M,N,\delta}}B_{s}$$
$d(x,y)\leq \frac{4\pi}{NM}$ if $x\in A_{s}, y\in B_{s}$,  and $|A_{s}|,|B_{s}|\geq \frac{C_{2}}{(NM)^2}$, with $C_{1}$ and $C_{2}$ constants.

Note that, in particular 

$$||\bar{w}_{\lambda,N,M,J,L,\tilde{t}}(x,\tilde{t})-\lambda_{0}f_{1}||_{C^{k,\beta}}\geq\lambda(   \frac{1}{4(4\pi)^{\beta}} -\frac{CL^{2}  }{M}-C(NM)^{-\delta}-CN^{-1+\delta}) $$

\end{lemma}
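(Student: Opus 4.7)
The plan is to perform the analysis at the critical time $t=\tilde{t}$, for which the parameters $\alpha^1_j,\alpha^2_j,\lambda_0$ are precisely tuned so that the $J\cdot L$ wave packets constructively interfere. Substituting $t=\tilde{t}$ into the cosine phase and using $\alpha^1_j+\tilde{t}\lambda_0 v_{\alpha,\gamma}(f_1)(r{=}1)=\tfrac{\pi}{2N}(1+j/J)^{-1+\gamma}$, I expand $(M_j+l)^\gamma$ around $l=0$ (valid because $L<M/2$). The three $O(M)$ contributions to the constant part of the phase (the shift from $\alpha^1_j$, the offset $\alpha^2_j$, and the leading Taylor term of $\tilde{t}\lambda_0 C_\gamma N^\gamma(M_j+l)^\gamma$) cancel exactly, leaving only $\cos\!\bigl(N(M_j+l)\alpha+\tfrac{k\pi}{2}+O(l^2/M)\bigr)$. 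Together with the scaling identity $\lambda_j(NM_j)^{-\beta}=\lambda(NM)^{-\beta}$ built into $\lambda_j=\lambda m_j^\beta$, $M_j=Mm_j$, this reduces the lemma to analysing a normalised sum of cosines.

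Taking the $k$-th tangential derivative and factoring out the main scale gives
\[
\partial_\alpha^k(\bar w^{pol}-\lambda_0 f_1)\approx \frac{(-1)^k\lambda}{(NM)^\beta}\cdot\frac{f_2(\cdot)}{JL}\sum_{j,l}(1+l/M_j)^k\cos\!\bigl(N(M_j+l)\alpha+O(l^2/M)\bigr),
\]
with $(1+l/M_j)^k=1+O(L/M)$. For the set $A$ I select the aligned angular points $\alpha^*_n=2\pi n/N$ sitting inside the envelope's angular support of width $\sim N^{-1+\delta}$, of which there are $\sim N^\delta$; at each $\alpha^*_n$ every cosine equals $1$, so the normalised sum is $1+O(L/M+L^2/M)$ and exceeds $1/2$. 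For $B$ I shift by $\pi/(NM)$: each cosine becomes $\cos(\pi(1+j/J))+O(L/M)$, and the geometric identity $(1/J)\sum_{j=1}^J\cos(\pi(1+j/J))=1/J$ shows the normalised sum is $\leq 1/4$ (taking $J$ large enough, which is consistent with the free parameter regime). Both pointwise bounds persist on a cube of side $\sim 1/(NM)$ in $(r,\alpha)$ since neither the envelope nor the cosine factors vary appreciably on that scale.

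To count blobs, I subdivide the radial envelope support $\{|r-1|<N^{-1+\delta}\}$ into $\sim MN^\delta$ strips of width $1/(NM)$, and pair each with one of the $\sim N^\delta$ aligned angular windows, producing $|S_{M,N,\delta}|\sim MN^{2\delta}$ pairs of blobs $A_s,B_s$, each of area $\gtrsim 1/(NM)^2$ and within $4\pi/(NM)$ of one another. I then pass to Cartesian derivatives: at $r\approx 1$ the radial $k$-th derivative carries the envelope scale $N^{1-\delta}$ instead of $NM$ and is therefore smaller than the tangential one by a factor $N^{-k\delta}M^{-k}$, so the maximum of $|\partial_u^k|$ over unit $u$ is realised (to leading order) by the tangential direction. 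Consequently the set $A$ works with $u$ approximately tangential, the set $B$ works for \emph{every} $u$, and the difference quotient between $x\in A_s$, $y\in B_s$ yields
\[
\frac{\lambda/(2(NM)^\beta)-\lambda/(4(NM)^\beta)-\mathrm{errors}}{(4\pi/(NM))^\beta}=\lambda\!\left(\frac{1}{4(4\pi)^\beta}-\mathrm{errors}\right).
\]
The main obstacle is the bookkeeping: the corrections from $(1+l/M_j)^k-1$, the $O(l^2/M)$ phase error, the envelope variation on scale $N^{-1+\delta}$, and the polar-to-Cartesian commutators from Section~2 must all be tracked through the $k$-fold derivative and collapsed into the explicit error terms $CL^2/(M(NM)^\beta)$, $C(NM)^{-\beta-\delta}$, and $C(NM)^{-\beta}N^{-1+\delta}$; the computation is lengthy but essentially mechanical once the phase-collapse and the cosine-sum identities above are in place.
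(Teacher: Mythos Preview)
Your strategy matches the paper's almost step for step: the phase-collapse at $t=\tilde t$ via the tuned $\alpha^1_j,\alpha^2_j,\lambda_0$, the second-order Taylor error $O(L^2/M)$ from $(M_j+l)^\gamma$, the identification of the constructive angular points $\alpha=2\pi n/N$, the count $\sim MN^{2\delta}$ of blobs from the radial and angular subdivisions of the envelope support, and the polar-to-Cartesian passage showing that only the pure $\partial_\alpha^k$ term matters up to $O((NM)^{-\delta-\beta})$. All of that is correct and is exactly what the paper does.

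The one genuine difference is your construction of $B$. You choose the explicit shift $\alpha=2\pi n/N+\pi/(NM)$ and use the identity $\tfrac{1}{J}\sum_{j=1}^{J}\cos(\pi(1+j/J))=1/J$ to conclude the normalised sum is $1/J+O(L/M)$. This is correct, but forces you to assume $J$ large to land below $1/4$, and the lemma as stated carries no such hypothesis. The paper avoids this by summing the cosines in closed Dirichlet-kernel form,
\[
\sum_{j,l}\cos(N(M_j+l)\alpha)=\frac{\sin(LN\alpha/2)}{\sin(N\alpha/2)}\cdot\frac{\sin(NM\alpha/2)}{\sin(NM\alpha/(2J))}\cdot\cos\bigl(\omega\,\alpha\bigr),\qquad \omega\sim \tfrac32 NM,
\]
observing that the last cosine has a zero $\tilde\alpha$ with $|\tilde\alpha|\le 2\pi/(NM)$, and then using the crude derivative bound $|\partial_\alpha(\cdot)|\le \bar C LMNJ$ to keep the sum below $JL/4$ on an interval of width $\sim 1/(NM)$ around $\tilde\alpha$. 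This works uniformly in $J$. Your approach is more explicit and perfectly adequate for the downstream application (where $J$ is indeed taken large), but to prove the lemma exactly as stated you should replace the fixed shift by a zero of the trigonometric sum as the paper does.
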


\begin{proof}

We start by finding the set $A$ as well as the unitary vector $v$ that gives us a big $k-th$ derivative.  

For this, we first want to obtain accurate estimates for $\frac{\partial^{k}( \bar{w}^{pol}_{\lambda,N,M,J,L,\delta,t}(r,\alpha,t)-\lambda_{0}f_{1}) }{\partial \alpha^{k} }$

\begin{align*}
    &\frac{\partial^{k} \bar{w}^{pol}_{\lambda,N,M,J,L,\tilde{t}}(r,\alpha,t) }{\partial \alpha^{k} }=\frac{\partial^{k} (\bar{w}^{pol}_{\lambda,N,M,J,L,\tilde{t}}(r,\alpha,t)-\lambda_{0}f_{1}) }{\partial \alpha^{k} }\\
    &=\sum_{i=0}^{k}{k\choose i}\sum_{j=1}^{J}\sum_{l=0}^{L-1}\bigg(\frac{1}{JL(NM_{j})^{k+\beta}}\frac{\partial^{i}\lambda_{j}f_{2}(N^{1-\delta}(r-1),N^{1-\delta}(\alpha-t\lambda_{0} v_{\alpha,\gamma}(f_{1})(r=1)))}{\partial \alpha^{i}}\\ \nonumber
    & \frac{\partial^{k-i}\cos(N(M_{j}+l)(\alpha-\alpha_{j}^{1}-t\lambda_{0} v_{\alpha,\gamma}(f_{1})(r=1))+\alpha_{j}^{2}+\frac{k\pi}{2}+t\lambda_{0} C_{\gamma}N^{\gamma}(M_{j}+l)^{\gamma}}{\partial \alpha^{k-i}}\big),\\
\end{align*}

and so

\begin{align*}
    &|\frac{\partial^{k} \bar{w}^{pol}_{\lambda,N,M,J,L,\tilde{t}}(r,\alpha,t) }{\partial \alpha^{k} }\\
    &-\sum_{j=1}^{J}\sum_{l=0}^{L-1}\bigg(\frac{1}{JL(NM_{j})^{k+\beta}}\lambda_{j}f_{2}(N^{1-\delta}(r-1),N^{1-\delta}(\alpha-t\lambda_{0} v_{\alpha,\gamma}(f_{1})(r=1)))\\ \nonumber
    & \frac{\partial^{k}\cos(N(M_{j}+l)(\alpha-\alpha_{j}^{1}-t\lambda_{0} v_{\alpha,\gamma}(f_{1})(r=1))+\alpha_{j}^{2}+\frac{k\pi}{2}+t\lambda_{0} C_{\gamma}N^{\gamma}(M_{j}+l)^{\gamma}}{\partial \alpha^{k})}\bigg)|\\
    &\leq C \lambda (NM)^{-(\delta+\beta)}.\\
\end{align*}

Furthermore

\begin{align*}
    &\frac{\partial^{k}\cos(N(M_{j}+l)(\alpha-\alpha_{j}^{1}-t\lambda_{0} v_{\alpha,\gamma}(f_{1})(r=1))+\alpha_{j}^{2}+\frac{k\pi}{2}+t\lambda_{0} C_{\gamma}N^{\gamma}(M_{j}+l)^{\gamma})}{\partial \alpha^{k}}\\
    &=(N(M_{j}+l))^{k} \cos(N(M_{j}+l)(\alpha-\alpha_{j}^{1}-t\lambda_{0} v_{\alpha,\gamma}(f_{1})(r=1))+\alpha_{j}^{2}+t\lambda_{0} C_{\gamma}N^{\gamma}(M_{j}+l)^{\gamma})\\ 
    &=(N(M_{j}+l))^{k}\cos(N(M_{j}+l)(\alpha-\alpha_{j}^{1}(t))+\alpha_{j}^{2}(t)+\alpha^{3}_{j,l}(t))\\
\end{align*}

with 
$$\alpha_{j}^{1}(t):=\alpha_{j}^{1}-t\lambda_{0}C_{\gamma}\gamma(NM_{j})^{\gamma-1}+t\lambda_{0} v_{\alpha,\gamma}(f_{1})(r=1)$$
$$\alpha_{j}^{2}(t):=\alpha_{j}^{2}+(1-\gamma)t\lambda_{0}C_{\gamma}(NM_{j})^{\gamma}$$

$$\alpha_{j,l}^{3}(t)=t\lambda_{0}C_{\gamma}((N(M_{j}+l))^{\gamma}-(NM_{j})^{\gamma}-\gamma lN^{\gamma}M_{j}^{\gamma-1}),$$

and we have

\begin{align*}
    &|\cos(N(M_{j}+l)(\alpha-\alpha_{j}^{1}(t))+\alpha_{j}^{2}(t)+\alpha^{3}_{j,l}(t))-\cos(N(M_{j}+l)(\alpha-\alpha_{j}^{1}(t))+\alpha_{j}^{2}(t))|\\
    &\leq C|\alpha^{3}_{j,l}(t)|\leq Ct\gamma(1-\gamma)\lambda_{0}C_{\gamma}(NM_{j})^{\gamma}\frac{L^2}{(M_{j})^2}=\frac{CtJ L^{2}  }{\tilde{t}M_{j}}, \\
\end{align*}

so

\begin{align*}
    &|\frac{\partial^{k} \bar{w}^{pol}_{\lambda,N,M,J,L,\tilde{t}}(r,\alpha,t) }{\partial \alpha^{k} }\\
    &-\sum_{j=1}^{J}\sum_{l=0}^{L-1}\bigg(\frac{1}{JL(NM)^{\beta}}\lambda f_{2}(N^{1-\delta}(r-1),N^{1-\delta}(\alpha-t\lambda_{0} v_{\alpha,\gamma}(f_{1})(r=1)))\\
    & \cos(N(M_{j}+l)(\alpha-\alpha_{j}^{1}(t))+\alpha_{j}^{2}(t))\bigg)|\leq C\lambda (NM)^{-(\delta+\beta)}+\lambda \frac{Ct L^{2}  }{\tilde{t}(NM)^{\beta}M}.\\
\end{align*}

But  we have that $\alpha_{j}^{1}(\tilde{t})=0$, $\alpha_{j}^{2}(\tilde{t})=0$, so that if $\alpha=i\frac{2\pi}{NM}$, $i\in\mathds{Z}$, then

\begin{align*}
    &|\frac{\partial^{k} \bar{w}^{pol}_{\lambda,N,M,J,L,\tilde{t}}(r,\alpha,\tilde{t}) }{\partial \alpha^{k} }-\frac{1}{(NM)^{\beta}}\lambda f_{2}(N^{1-\delta}(r-1),N^{1-\delta}(\alpha-\lambda_{0}\tilde{t} v_{\alpha,\gamma}(f_{1})(r=1)))|\\
    & \leq C\lambda(NM)^{-(\delta+\beta)}+\lambda \frac{CL^{2}  }{(NM)^{\beta}M},\\
\end{align*}
and in fact, if $\alpha\in[i\frac{2\pi }{N}-\frac{\pi}{16NM},i\frac{2\pi }{N}+\frac{\pi}{16NM}]$  with $i\in\mathds{Z}$ then

\begin{align*}
    &\frac{\partial^{k} \bar{w}^{pol}_{\lambda,N,M,J,L,\tilde{t}}(r,\alpha,\tilde{t}) }{\partial \alpha^{k} }\\
    & \geq \frac{1}{2(NM)^{\beta}}\lambda f_{2}(N^{1-\delta}(r-1),N^{1-\delta}(\alpha-\lambda_{0}\tilde{t} v_{\alpha,\gamma}(f_{1})(r=1)))-C\lambda(NM)^{-(\delta+\beta)}-\lambda\frac{C L^{2}  }{(NM)^{\beta}M}.\\
\end{align*}

But since  $f(N^{1-\delta}(r-1),N^{1-\delta}(\alpha-t\lambda_{0} v_{\alpha,\gamma}(f_{1})(r=1)))=1$ if $(r,\alpha)\in[1-\frac{N^{-1+\delta}}{4},1+\frac{N^{-1+\delta}}{4}]\times[t\lambda_{0} v_{\alpha,\gamma}(f_{1})(r=1)-\frac{N^{-1+\delta}}{4},t\lambda_{0} v_{\alpha,\gamma}(f_{1})(r=1)+\frac{N^{-1+\delta}}{4}]$ then defining

\begin{align*}
    &A^{pol}=\cup_{j=-\lfloor \frac{N^{\delta}M}{4}\rfloor}^{j=\lfloor \frac{N^{\delta}M}{4}\rfloor-1}\cup_{i=\lfloor\frac{-N^{\delta}}{64}+\frac{Nt\lambda_{0} v_{\alpha,\gamma}(f_{1})(r=1)}{2\pi}\rfloor}^{i=\lfloor\frac{N^{\delta}}{64}+\frac{Nt\lambda_{0} v_{\alpha,\gamma}(f_{1})(r=1)}{2\pi}\rfloor} A^{pol}_{i,j}\\
\end{align*}
with 
$$A_{i,j}:= (1+\frac{j}{NM},1+\frac{j+1}{NM}]\times[i\frac{2\pi }{N}-\frac{\pi}{16NM},i\frac{2\pi }{N}+\frac{\pi}{16NM}]$$
we have that, for $(r,\alpha)\in A^{pol}$,

\begin{align*}
    &\frac{\partial^{k} \bar{w}^{pol}_{\lambda,N,M,J,L,\tilde{t}}(r,\alpha,\tilde{t}) }{\partial \alpha^{k} }\\
    & \geq \frac{\lambda}{2(NM)^{\beta}} -C\lambda(NM)^{-(\delta+\beta)}-\lambda \frac{CL^{2}  }{(NM)^{\beta}M}.\\
\end{align*}

Furthermore, the sets $A_{i,j}$ fulfil $|A_{i,j}|\geq C (NM)^{-2}$ for some $C>0$.

Therefore, if we prove that there exists a unitary vector $u=(u_{1},u_{2})$ such that, if $x=(r\cos(\alpha),r \sin(\alpha))\in A$

\begin{align*}
   &\frac{\partial^{k}(\bar{w}_{\lambda,N,M,J,L,\tilde{t}}(x,\tilde{t})-\lambda_{0}f_{1})}{\partial u^{k}} \approx \frac{\partial^{k}\bar{w}^{pol}_{\lambda,N,M,J,L,\tilde{t}}(r,\alpha,\tilde{t})}{\partial \alpha^{k}}\\
\end{align*}
in a suitable way, then we are done proving the existence of the desired set $A$. But

\begin{align*}
    &\frac{\partial f(x)}{\partial u}=u_{1}[\cos(\alpha(x))\frac{\partial f^{pol}(r(x),\alpha(x)) }{\partial  r}-\frac{\sin(\alpha(x))}{r}\frac{\partial f^{pol}(r(x),\alpha(x)) }{\partial  \alpha}]\\
    &+u_{2}[\sin(\alpha(x))\frac{\partial f^{pol}(r(x),\alpha(x)) }{\partial  r}+\frac{\cos(\alpha(x))}{r}\frac{\partial f^{pol}(r(x),\alpha(x)) }{\partial  \alpha}]\\
\end{align*}
so that

\begin{align*}
    &\frac{\partial^{k} f(x)}{\partial u^{k}}=\sum_{i_{1}=0}^{k}\sum_{i_{2}=0}^{i_{1}}g_{i_{1},i_{2}}(\alpha,r,u_{1},u_{2})\frac{\partial f^{pol}(r,\alpha)}{\partial r^{i_{2}}\partial^{i_{1}-i_{2}} \alpha}\\
\end{align*}
with $g_{i_{1},i_{2}}$ $C^{\infty}$ and bounded as long as we only consider $r\geq \frac12$.

Applying this formula to $\bar{w}_{\lambda,N,M,J,L,\tilde{t}}$ we get

\begin{align*}
   &|\frac{\partial^{k}(\bar{w}_{\lambda,N,M,J,L,\tilde{t}}(x,\tilde{t})-\lambda_{0}f_{1})}{\partial u^{k}}-g_{k,0}(\alpha,r,u_{1},u_{2})\frac{\partial^{k}\bar{w}^{pol}_{\lambda,N,M,J,L,\tilde{t}}(r,\alpha,\tilde{t})}{\partial \alpha^{k}}|\\
   &\leq  C\lambda (NM)^{-(\delta+\beta)}\\
\end{align*}
and it is easy to prove that $g_{k,0}=\frac{g_{k-1,0}(\cos(\alpha)u_{2}-\sin(\alpha)u_{1})}{r}$, $g_{0,0}=1$
and therefore taking $v=(-\sin(\alpha),\cos(\alpha))$ we get

\begin{align*}
   &|\frac{\partial^{k}(\bar{w}_{\lambda,N,M,J,L,\tilde{t}}(x,\tilde{t})-\lambda_{0}f_{1})}{\partial u^{k}}-\frac{1}{r^{k}}\frac{\partial^{k}\bar{w}^{pol}_{\lambda,N,M,J,L,\tilde{t}}(r,\alpha,\tilde{t})}{\partial \alpha^{k}}|\\
   &\leq C\lambda (NM)^{-(\delta+\beta)}\\
\end{align*}
and using $r\in A^{pol}\Rightarrow r\in[1-\frac{N^{-1+\delta}}{4},1+\frac{N^{-1+\delta}}{4}]$ plus the bounds for $w^{pol}_{\lambda,N,M,J,L,\tilde{t}}$
\begin{align*}
   &|\frac{\partial^{k}(\bar{w}_{\lambda,N,M,J,L,\tilde{t}}(x,\tilde{t})-\lambda_{0}f_{1})}{\partial u^{k}}-\frac{\partial^{k}\bar{w}^{pol}_{\lambda,N,M,J,L,\tilde{t}}(r,\alpha,\tilde{t})}{\partial \alpha^{k}}|\\
   &\leq C\lambda(NM)^{-(\delta+\beta)}+C\lambda (NM)^{-\beta}N^{-1+\delta},\\
\end{align*}
so, for $x\in A$ 

\begin{align*}
   &\frac{\partial^{k}(\bar{w}_{\lambda,N,M,J,L,\tilde{t}}(x,\tilde{t})}{\partial u^{k}}\\
   &\geq \frac{\lambda}{2(NM)^{\beta}} -\lambda \frac{CL^{2}  }{(NM)^{\beta}M}-C\lambda(NM)^{-(\delta+\beta)}-C\lambda(NM)^{-\beta}N^{-1+\delta},\\
\end{align*}
which finishes the proof for the existence of the set $A$. For the set $B$, we remember that for $r\geq \frac12$ we have

\begin{align*}
   &|\frac{\partial^{k}(\bar{w}_{\lambda,N,M,J,L,\tilde{t}}(x,\tilde{t})-\lambda_{0}f_{1})}{\partial u^{k}}-g_{k,0}(\alpha,r,u_{1},u_{2})\frac{\partial^{k}\bar{w}^{pol}_{\lambda,N,M,J,L,\tilde{t}}(r,\alpha,\tilde{t})}{\partial \alpha^{k}}|\\
   &\leq  C\lambda (NM)^{-(\delta+\beta)}\\
\end{align*}
and since $|g_{k,0}|\leq \frac{1}{r^k}$ we only need to find a sets $B_{i,j}$ with the desired size and distance to $A_{i,j}$ such that $|\frac{\partial^{k}\bar{w}^{pol}_{\lambda,N,M,J,L,\tilde{t}}(r,\alpha,\tilde{t})}{\partial \alpha^{k}}|$ is small. But

\begin{align*}
    &|\frac{\partial^{k}\bar{w}^{pol}_{\lambda,N,M,J,L,\tilde{t}}(r,\alpha,\tilde{t})}{\partial \alpha^{k}}|\\
    &\leq |\sum_{j=1}^{J}\sum_{l=0}^{L-1}\big(\frac{1}{JL(NM)^{\beta}}\lambda f_{2}(N^{1-\delta}(r-1),N^{1-\delta}(\alpha-\lambda_{0}\tilde{t} v_{\alpha,\gamma}(f_{1})(r=1)))\\
    & \cos(N(M_{j}+l)\alpha)\big)|+ C\lambda (NM)^{-(\delta+\beta)}+\lambda \frac{CJL^{2}  }{(NM)^{\beta}M}.\\
\end{align*}
and using

\begin{align*}
    &\sum_{l=0}^{L-1} \cos(N(M_{j}+l)\alpha)= \frac{\sin(L\frac{N\alpha}{2})}{\sin(\frac{N\alpha}{2})}\cos(NM_{j}\alpha+\frac{(L-1)}{2}N\alpha),\\
\end{align*}

\begin{align*}
    &\sum_{j=1}^{J}  \frac{\sin(L\frac{N\alpha}{2})}{\sin(\frac{N\alpha}{2})}\cos(NM\frac{j}{J}\alpha+NM\alpha+\frac{(L-1)}{2}N\alpha)\\
    &=\frac{\sin(L\frac{N\alpha}{2})}{\sin(\frac{N\alpha}{2})}\frac{\sin(\frac{NM\alpha}{2})}{\sin(\frac{NM\alpha}{2J})}\cos(NM(1+\frac{1}{J})\alpha+\frac{(L-1)}{2}N\alpha+\frac{(J-1)NM\alpha}{2J}).\\
\end{align*}

If now we define

$$f^{pol}_{L,N,M,J}(r,\alpha)=\frac{\sin(L\frac{N\alpha}{2})}{\sin(\frac{N\alpha}{2})}\frac{\sin(\frac{NM\alpha}{2})}{\sin(\frac{NM\alpha}{2J})}\cos(NM(1+\frac{1}{J})\alpha+\frac{(L-1)}{2}N\alpha+\frac{(J-1)NM\alpha}{2J})$$

then we have that

\begin{itemize}
    \item $f^{pol}_{L,N,M,J}$ is $\frac{2\pi}{N}-$periodic in the $\alpha$ variable.
    \item There exists $|\tilde{\alpha}|\leq \frac{2\pi}{NM}$ such that $f^{pol}_{L,N,M,J}(r,\tilde{\alpha})=0$.
    \item $|\frac{\partial f^{pol}_{L,N,M,J}(r,\alpha)}{\partial \alpha}|\leq \bar{C}LMNJ$,
\end{itemize}
with $\bar{C}$ a constant, which means that if $\alpha\in \cup_{i\in\mathds{Z}} [\tilde{\alpha}+i \frac{2\pi}{N}-\frac{1}{4\bar{C}MN},\tilde{\alpha}+ i \frac{2\pi}{N}+\frac{1}{4\bar{C}MN}]$ then $|f^{pol}_{L,N,M,J}(r,\alpha)|\leq \frac{JL}{4}$. Using this we have that, if  $\alpha\in \cup_{i\in\mathds{Z}} [\tilde{\alpha}+i \frac{2\pi}{N}-\frac{1}{4\bar{C}MN},\tilde{\alpha}+ i \frac{2\pi}{N}+\frac{1}{4\bar{C}MN}]$ then

\begin{align*}
    &|\frac{\partial^{k}\bar{w}^{pol}_{\lambda,N,M,J,L,\tilde{t}}(r,\alpha,\tilde{t})}{\partial \alpha^{k}}|\\
    & \leq \frac{\lambda}{4(MN)^{\beta}}+ \lambda C(NM)^{-(\delta+\beta)}+\lambda \frac{CJL^{2}  }{(NM)^{\beta}M},\\
\end{align*}
so, for any unitary vector $u$
\begin{align*}
    &|\frac{\partial^{k}\bar{w}_{\lambda,N,M,J,L,\tilde{t}}(x,\tilde{t})}{\partial u^{k}}|\\
    & \leq \frac{\lambda}{4(MN)^{\beta}}+ \lambda C(NM)^{-(\delta+\beta)}+\lambda \frac{CJL^{2}  }{(NM)^{\beta}M},\\
\end{align*}
and defining now

$$B_{i,j}:= (1+\frac{j}{NM},1+\frac{j+1}{NM}]\times[\tilde{\alpha}+i\frac{2\pi }{N}-\frac{\pi}{4\bar{C}NM},\tilde{\alpha}+i\frac{2\pi }{N}+\frac{\pi}{4\bar{C}NM}]$$
and it is easy to check that $A_{i,j}$, $B_{i,j}$ have the desired properties.
\end{proof}

The previous lemma shows that our pseudo-solutions do have a big norm at time $\tilde{t}$, and although this will be enough to show ill-posedness, for our non-existence result we will build solutions such that the $C^{k,\beta}$ norm will be infinite for a period of time, and this requires us to obtain specific bounds about how fast our solution can change their $C^{k,\beta}$ norm.

\begin{lemma}\label{derivadackbeta}
We have that

$$\frac{d||\bar{w}_{\lambda,N,M,J,L,\tilde{t}}(x_{1},x_{2},t)-\lambda_{0}f_{1}(\sqrt{x_{1}^{2}+x_{2}^{2}})||_{C^{k,\beta}}}{dt}\leq \frac{C\lambda M}{\tilde{t}}$$
with $C$ a constant.
\end{lemma}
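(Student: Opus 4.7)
The plan is to exploit the fact that the $t$-dependence of $\bar{w}_{\lambda,N,M,J,L,\tilde{t}} - \lambda_{0} f_{1}$ enters through two kinds of phase shifts, and that one of them corresponds to a rigid rotation of the plane. Write $c(t) := t\lambda_{0} v_{\alpha,\gamma}(f_{1})(r=1)$ and $\phi_{j,l}(t) := t\lambda_{0} C_{\gamma} N^{\gamma}(M_{j}+l)^{\gamma}$. Both the argument of $f_{2}$ and the argument of the cosine contain the common shift $\alpha - c(t)$, so by setting $\tilde\alpha = \alpha - c(t)$ one obtains a function $\tilde{w}_{pert}^{pol}(r,\tilde\alpha,t)$ whose only $t$-dependence is through the phases $\phi_{j,l}(t)$. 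In Cartesian coordinates this says $(\bar{w}_{\lambda,N,M,J,L,\tilde{t}} - \lambda_{0} f_{1})(x,t) = \tilde{w}_{pert}(R_{-c(t)}x,t)$ for $R_{-c(t)}$ the rotation about the origin by angle $-c(t)$. Rotations about the origin act by orthogonal linear maps, so each $k$-th Cartesian partial derivative of $f \circ R$ is a bounded linear combination of the $k$-th partials of $f$ evaluated at $Rx$ with coefficients depending only on $R$; the same applies to the Hölder seminorm since $|R(y+h)-Ry|=|h|$. Hence $\|\bar{w}_{\lambda,N,M,J,L,\tilde{t}}(\cdot,t) - \lambda_{0}f_{1}\|_{C^{k,\beta}}$ and $\|\tilde{w}_{pert}(\cdot,t)\|_{C^{k,\beta}}$ are comparable up to a constant depending only on $k$, and
\[
\frac{d}{dt}\|\bar{w}_{\lambda,N,M,J,L,\tilde{t}}(\cdot,t) - \lambda_{0}f_{1}\|_{C^{k,\beta}} \leq C \|\partial_{t}\tilde{w}_{pert}(\cdot,t)\|_{C^{k,\beta}}.
\]

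This reduction is essential: a direct differentiation of $\bar{w}_{\lambda,N,M,J,L,\tilde t}-\lambda_0 f_1$ in $t$ produces a transport term $-c'(t)\,\partial_{\alpha}\bar{w}_{pert}$ whose $C^{k,\beta}$ seminorm is of order $\lambda_{0}\cdot NM\cdot \lambda$, vastly larger than the desired bound $C\lambda M/\tilde t$. This term contributes nothing to the time derivative of the norm because it represents a pure rotational action; passing to the rotating frame removes it entirely and leaves only the harmless phase modulation $\phi_{j,l}(t)$.

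It remains to bound $\|\partial_{t}\tilde{w}_{pert}\|_{C^{k,\beta}}$. Differentiating in $t$ gives
\[
\partial_{t}\tilde{w}_{pert}^{pol}(r,\tilde\alpha,t) = -\sum_{j=1}^{J}\sum_{l=0}^{L-1} \frac{\lambda_{j}\, \phi_{j,l}'(t)}{JL(NM_{j})^{k+\beta}}\, f_{2}(N^{1-\delta}(r-1), N^{1-\delta}\tilde\alpha)\sin(\Phi_{j,l}(r,\tilde\alpha,t)),
\]
where $\phi_{j,l}'(t)=\lambda_{0} C_{\gamma} N^{\gamma}(M_{j}+l)^{\gamma}$ and $\Phi_{j,l}$ denotes the shifted phase. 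Plugging in $\lambda_{0} = \pi M^{1-\gamma}/(2\tilde{t} N^{\gamma}C_{\gamma}\gamma)$ and using $M \leq M_{j}+l \leq \tfrac{5M}{2}$, one obtains $|\phi_{j,l}'(t)|\leq CM/\tilde{t}$ uniformly in $j,l,t$. Therefore $\partial_{t}\tilde{w}_{pert}$ has exactly the same structural form as the original perturbation $\bar{w}_{\lambda,N,M,J,L,\tilde t}-\lambda_0 f_1$ (a sum over $j,l$ of a smooth bump of size $\sim N^{1-\delta}$ in derivatives, times an oscillation of frequency $N(M_{j}+l)\sim NM$, with amplitude $\lambda_{j}/(JL(NM_{j})^{k+\beta})$), except that $\cos$ has been replaced by $\sin$ and each summand carries an additional bounded multiplier of size at most $CM/\tilde t$. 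Repeating verbatim the derivative-counting argument that yielded the bound $C\lambda$ for $\bar{w}_{pert}$ in the listed basic properties, but pulling out the multiplier $\phi'_{j,l}$, gives $\|\partial_{t}\tilde{w}_{pert}(\cdot,t)\|_{C^{k,\beta}} \leq C\lambda M/\tilde{t}$, and combining with the first step finishes the proof.

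The main obstacle is conceptual rather than computational: one must recognize that naive $t$-differentiation of $\bar{w}_{pert}$ produces a transport term which, although norm-preserving, is far too large to be estimated term-by-term, and one is forced to change to the rotating frame first. Once that is done, the remaining work is a minor variant of the $C^{k,\beta}$ estimate for the perturbation itself, with the $M/\tilde t$ factor appearing naturally from the explicit form of $\phi'_{j,l}$ and the definition of $\lambda_0$.
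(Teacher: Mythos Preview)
Your proof is correct and follows essentially the same approach as the paper: both use rotation-invariance of the $C^{k,\beta}$ norm to pass to the rotating frame (removing the $c(t)$ shift), then bound the $C^{k,\beta}$ norm of the time derivative of the rotated function, with the factor $M/\tilde t$ coming from $\phi'_{j,l}=\lambda_0 C_\gamma N^\gamma(M_j+l)^\gamma$ and the explicit form of $\lambda_0$. One minor remark: rotations are in fact isometries of $C^{k,\beta}$ (not merely comparable), which makes the reduction $\frac{d}{dt}\|\bar w_{pert}\|_{C^{k,\beta}}=\frac{d}{dt}\|\tilde w_{pert}\|_{C^{k,\beta}}\le \|\partial_t\tilde w_{pert}\|_{C^{k,\beta}}$ immediate.
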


\begin{proof}
First, since rotations do not change the $C^{k,\beta}$ norm, it is enough to study the evolution of the norm of

\begin{align*}
    & \sum_{j=1}^{J}\sum_{l=0}^{L-1}\bigg(\lambda_{j}f_{2}(N^{1-\delta}(r(x)-1),N^{1-\delta}\alpha(x))\\
    & \frac{\cos(N(M_{j}+l)(\alpha(x)-\alpha_{j}^{1})+\alpha_{j}^{2}+\frac{k\pi}{2}+t\lambda_{0} C_{\gamma}N^{\gamma}(M_{j}+l)^{\gamma}}{JL(NM_{j})^{k+\beta}}\bigg)\\ 
\end{align*}
which has a time derivative

\begin{align*}
    & -\lambda_{0} C_{\gamma}N^{\gamma}(M_{j}+l)^{\gamma}\sum_{j=1}^{J}\sum_{l=0}^{L-1}\bigg(\lambda_{j}f_{2}(N^{1-\delta}(r(x)-1),N^{1-\delta}\alpha(x))\\
    & \frac{\sin(N(M_{j}+l)(\alpha(x)-\alpha_{j}^{1})+\alpha_{j}^{2}+\frac{k\pi}{2}+t\lambda_{0} C_{\gamma}N^{\gamma}(M_{j}+l)^{\gamma}}{JL(NM_{j})^{k+\beta}}\bigg)\\ 
\end{align*}
but since this function has support in $r\geq \frac12$, we can use (\ref{equivcmb}) and it is enough to obtain bounds for the $C^{k,\beta}$ norm in polar coordinates. However, using the expression for $\lambda_{0}$ we easily obtain

\begin{align*}
    & ||\lambda_{0} C_{\gamma}N^{\gamma}(M_{j}+l)^{\gamma}\sum_{j=1}^{J}\sum_{l=0}^{L-1}\bigg(\lambda_{j}f_{2}(N^{1-\delta}(r-1),N^{1-\delta}\alpha)\\
    & \frac{\sin(N(M_{j}+l)(\alpha-\alpha_{j}^{1})+\alpha_{j}^{2}+\frac{k\pi}{2}+t\lambda_{0} C_{\gamma}N^{\gamma}(M_{j}+l)^{\gamma}}{JL(NM_{j})^{k+\beta}}\bigg)||_{C^{k,\beta}}\\
    &\leq \frac{C\lambda M}{\tilde{t}}.\\
\end{align*}

\end{proof}

We only need one last technical result before we can go to prove our ill-posedness result. Namely, we need to obtain bounds for the error between our pseudo-solution and the real solution to $\gamma$-SQG with our initial conditions. We will, however, prove a slightly stronger result, where we show that the error remains small even if we compare to a solution to $\gamma$-SQG with a small error in the velocity. This will later on be necessary when we prove the non-existence of solutions in $C^{k,\beta}.$

\begin{lemma}\label{evolucionerror}
Given a pseudo-solution $\bar{w}_{\lambda,N,M,J,L,\tilde{t}}(x_{1},x_{2},t)$ and a function $v_{error}=(v_{1,error},v_{2,error})$ fulfilling

$$||v_{error}||_{C^{m}}\leq \frac{ N^{m}}{N^{k+\beta+2}}$$

for $m=0,1,...,k+2$ and

$$\frac{\partial v_{1,error}}{\partial x_{1}}+\frac{\partial v_{2,error}}{\partial x_{2}}=0$$

we have that, for any fixed $T$,$\lambda,M,J,L$ and $\tilde{t}$, if $N$ is big enough, then the unique $H^{k+\beta+1-\delta}$ solution $\tilde{w}_{\lambda,N,M,J,L,\tilde{t}}(x_{1},x_{2},t)$ to

\begin{equation}\label{gSQGvext}
   \frac{\partial \tilde{w}_{\lambda,N,M,J,L,\tilde{t}} }{\partial t}+(v_{\gamma}(\tilde{w}_{\lambda,N,M,J,L,\tilde{t}})+v_{error})\cdot(\nabla\tilde{w}_{\lambda,N,M,J,L,\tilde{t}})=0 
\end{equation}

$$\tilde{w}_{\lambda,N,M,J,L,\tilde{t}}(x,0)=\bar{w}_{\lambda,N,M,J,L,\tilde{t}}(x,0)$$
 exists for $t\in[0,T]$ and, if we define 
 $$W:=\tilde{w}_{\lambda,N,M,J,L,\tilde{t}}-\bar{w}_{\lambda,N,M,J,L,\tilde{t}}$$
 then
$$||W(x,t)||_{L^{2}}\leq C(1+\frac{1}{\tilde{t}})t N^{-k-\beta-1},$$
$$||W(x,t)||_{H^{k+\beta+1-\delta}}\leq C(1+\frac{1}{\tilde{t}}) tN^{-\delta}.$$
with $C$ depending on $T,\lambda,M,J$ and $L$.

Furthermore, by interpolation, for any $s\in[0,k+\beta+1-\delta]$ we have that

$$||W(x,t)||_{H^{s}}\leq C(1+\frac{1}{\tilde{t}}) tN^{-(k+\beta+1)+s}.$$
\end{lemma}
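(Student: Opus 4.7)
The strategy is to compare $\tilde{w}_{\lambda,N,M,J,L,\tilde{t}}$ with the pseudo-solution via energy estimates for the difference $W := \tilde{w}_{\lambda,N,M,J,L,\tilde{t}} - \bar{w}_{\lambda,N,M,J,L,\tilde{t}}$. Subtracting the pseudo-solution identity $\partial_t \bar{w} + v_{\gamma}(\bar{w})\cdot\nabla\bar{w} = -F_{\lambda,N,M,J,L,\tilde{t}}$ from (\ref{gSQGvext}) gives
\begin{equation*}
\partial_t W + (v_{\gamma}(\tilde{w}) + v_{error})\cdot\nabla W = -v_{\gamma}(W)\cdot\nabla\bar{w} - v_{error}\cdot\nabla\bar{w} + F_{\lambda,N,M,J,L,\tilde{t}},
\end{equation*}
with $W(\cdot,0)=0$, and the transporting velocity $v_{\gamma}(\tilde{w})+v_{error}$ is divergence-free by hypothesis. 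The condition $k+\beta>1+2\delta+\gamma$ yields $s:=k+\beta+1-\delta > 2+\gamma+\delta$, so local well-posedness of $\gamma$-SQG in $H^s$ from \cite{Chaecordoba} applies, and the smoothness and divergence-free nature of $v_{error}$ do not obstruct it; this gives short-time existence of $\tilde{w}$. To extend to $[0,T]$, I would run a bootstrap argument: let $T_*\in(0,T]$ be the maximal time on which $\tilde{w}\in C([0,T_*];H^s)$ with $\|W(t)\|_{H^s}\le 1$. The estimate below will show $\|W\|_{H^s}\ll 1$ on this interval, ruling out blow-up of $\tilde{w}$ and forcing $T_*=T$.

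For the $H^s$ estimate, apply $\Lambda^s$ to the $W$-equation and pair with $\Lambda^s W$. The top-order transport term integrates to zero by the divergence-free property, leaving a commutator $[\Lambda^s, v_{\gamma}(\tilde{w})+v_{error}]\nabla W$ together with the three inhomogeneous contributions. Using the splitting $\tilde{w}=W+\bar{w}$, the commutator decomposes according to $v_{\gamma}(W)+v_{\gamma}(\bar{w})+v_{error}$: the $v_{\gamma}(\bar{w})$ and $v_{error}$ pieces have all their relevant $C^m$ and $H^m$ norms a priori bounded in $N,M,\lambda,J,L$ by the structural bounds on $\bar{w}$ in Section 4 and the hypothesis on $v_{error}$, so standard Kato--Ponce type commutator estimates give a term linear in $\|W\|_{H^s}$ with coefficients independent of $W$; the $v_{\gamma}(W)$ piece contributes a term quadratic in $\|W\|_{H^s}$, absorbable under the bootstrap. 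The inhomogeneous terms $v_{\gamma}(W)\cdot\nabla\bar{w}$ and $v_{error}\cdot\nabla\bar{w}$ are handled by Moser-type product inequalities, again using the smoothness of $\bar{w}$ and $v_{error}$. The source satisfies $\|\Lambda^s F_{\lambda,N,M,J,L,\tilde{t}}\|_{L^2}\le C(1+1/\tilde{t})N^{-\delta}$ by the previous lemma and interpolation. Gronwall's inequality then gives $\|W(t)\|_{H^s}\le C(1+1/\tilde{t})tN^{-\delta}$.

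For the $L^2$ bound, pairing with $W$ eliminates the transport term and produces
\begin{equation*}
\frac{1}{2}\frac{d}{dt}\|W\|_{L^2}^2 \le \|W\|_{L^2}\bigl(\|v_{\gamma}(W)\|_{L^2}\|\nabla\bar{w}\|_{L^\infty} + \|v_{error}\|_{L^\infty}\|\nabla\bar{w}\|_{L^2} + \|F_{\lambda,N,M,J,L,\tilde{t}}\|_{L^2}\bigr).
\end{equation*}
The delicate factor $\|v_{\gamma}(W)\|_{L^2}\le C\|W\|_{H^{\gamma}}$ is controlled by interpolating $H^{\gamma}$ between $L^2$ and the already-established $H^s$-bound on $W$, yielding a Gronwall inequality whose integration gives $\|W(t)\|_{L^2}\le C(1+1/\tilde{t})tN^{-k-\beta-1}$. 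Uniqueness in $H^s$ is obtained by applying the same argument to the difference of two hypothetical solutions (with $F$ set to zero). The main obstacle is the commutator with $v_{\gamma}(\tilde{w})$: since $v_{\gamma}$ is a Fourier multiplier of positive order $\gamma$, a naive estimate would require $\tilde{w}\in H^{s+\gamma}$, which is not in hand. The decomposition $\tilde{w}=W+\bar{w}$ resolves this, since $\bar{w}$ has a priori bounds on all $H^m$ norms (polynomial in $N,M$) while $W$ enters only through terms whose regularity loss is either absorbed by the bootstrap smallness or paid for by the rapidly decaying source.
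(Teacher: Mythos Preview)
Your overall architecture (derive the equation for $W$, bootstrap, energy estimates at $L^2$ and $H^s$, Gronwall) is the same as the paper's, but there is a genuine gap in how you handle the terms where $v_\gamma(W)$ appears at top order. The operator $v_{i,\gamma}$ has positive order $\gamma$, and neither interpolation nor bootstrap smallness nor the decomposition $\tilde w=W+\bar w$ compensates for this.

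Concretely: in your $L^2$ step you bound $\int W\,(v_\gamma(W)\cdot\nabla\bar w)$ by $\|W\|_{L^2}\|v_\gamma(W)\|_{L^2}\|\nabla\bar w\|_{L^\infty}$ and then use $\|v_\gamma(W)\|_{L^2}\le C\|W\|_{H^\gamma}$, interpolating with the $H^s$ bound. But $\|W\|_{H^s}$ only carries $N^{-\delta}$; interpolation at level $\gamma$ yields at best $\|W\|_{H^\gamma}\lesssim t\,N^{-(k+\beta+1)+\gamma}$, so the resulting Gronwall gives $\|W\|_{L^2}\lesssim t\,N^{-(k+\beta+1)+\gamma}$, strictly weaker than the stated $N^{-(k+\beta+1)}$. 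In the $H^s$ step the situation is worse: after Kato--Ponce, the term $\int(\Lambda^sW)(\Lambda^s v_{i,\gamma}(W))\,\partial_{x_i}\bar w$ (and the analogous one with $\partial_{x_i}W$) requires $\|W\|_{H^{s+\gamma}}$, which is simply unavailable; ``absorbable under the bootstrap'' does not apply because smallness of $\|W\|_{H^s}$ gives no control on $\|W\|_{H^{s+\gamma}}$, and the decomposition $\tilde w=W+\bar w$ only shifts regularity onto $\bar w$ in the \emph{transport} commutator, not in this inhomogeneous term.

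The paper closes both estimates by exploiting that each $v_{i,\gamma}$ is an \emph{odd} Fourier multiplier: for such $A$ one has $\int f\,A(f)\,g=-\tfrac12\int f\,[A(fg)-g\,A(f)]$. Applied with $f=W$ (resp.\ $f=\Lambda^sW$) and $g=\partial_{x_i}\bar w$, this converts the dangerous term into a commutator $[v_{i,\gamma},g]$ acting on $f$, which is then bounded by $\|f\|_{L^2}^2\|\nabla v_{i,\gamma}(\bar w)\|_{L^\infty}$ via the commutator estimate of Li (\cite{Dongli}, Corollary~1.4). This is exactly the mechanism from \cite{Chaecordoba} for $\gamma$-SQG well-posedness, and it is the missing ingredient in your sketch.
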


\begin{proof}
First we note that the evolution equation for $W$ is
\begin{align*}
    &\frac{\partial W}{\partial t}+(v_{\gamma}(\bar{w}_{\lambda,N,M,J,L,\tilde{t}})+v_{\gamma}(W)+v_{error})\cdot \nabla W\\
    &+(v_{\gamma}(W)+v_{error})\cdot\nabla\bar{w}_{\lambda,N,M,J,L,\tilde{t}}-F_{\lambda,N,M,J,L,\tilde{t}}=0.\\
\end{align*}
and (using the properties of $F_{\lambda,N,M,J,L,\tilde{t}}$ for $N$ big) this evolution equation has local existence and uniqueness in $H^{k+\beta+1-\delta}$ under our assumptions for $v_{error}$. Furthermore, it is enough to prove our inequalities under the assumption $||W(x,t)||_{H^{k+\beta+1-\delta}}\leq C N^{-\delta} log(N)$, since then using the continuity in time of $||W||_{H^{k+\beta+1-\delta}}$ and taking $N$ big would give us the result for the desired time interval.

For the $L^{2}$ norm, we can use incompressibility to obtain

$$\frac{\partial||W||^2_{L^{2}} }{\partial t}\leq 2\int  |W\big(v_{\gamma}(W)\nabla\bar{w}_{\lambda,N,M,J,L,\tilde{t}}-F_{\lambda,N,M,J,L,\tilde{t}}+v_{error}\nabla\bar{w}_{\lambda,N,M,J,L,\tilde{t}}\big)|dx$$
$$\leq \int  2|Wv_{\gamma}(W)\nabla\bar{w}_{\lambda,N,M,J,L,\tilde{t}})dx|+\frac{C}{N^{k+\beta+1}}(1+\frac{1}{\tilde{t}})||W||_{L^2}.$$

To bound the integral term with $v_{\gamma}(W)$ we need to use two important properties that will also be key when working with the $H^{k+\beta+1-\delta}$ bounds. First, as in \cite{Chaecordoba}, using that, for an odd operator $A$ (which in our case will be $v_{1,\gamma}$ and $v_{2,\gamma}$) we have
$$\int fA(f)g=-\frac{1}{2}\int f(A(gf)-gA(f))$$
and so

$$|\int Wv_{\gamma}(W)\nabla\bar{w}_{\lambda,N,M,J,L,\tilde{t}})dx|=\frac{1}{2}|\int  W\big(v_{i,\gamma}(W\frac{\partial  \bar{w}_{\lambda,N,M,J,L,\tilde{t}}}{\partial x_{i}})-v_{i,\gamma}(W)\frac{\partial  \bar{w}_{\lambda,N,M,J,L,\tilde{t}}}{\partial x_{i}}\big)dx|$$
and using corollary 1.4 in \cite{Dongli} 

$$|\int Wv_{\gamma}(W)\nabla\bar{w}_{\lambda,N,M,J,L,\tilde{t}})dx|\leq ||W||^{2}_{L^2}||\nabla v_{\gamma}(\bar{w}_{\lambda,N,M,J,L,\tilde{t}})||_{L^{\infty}}\leq C ||W||^{2}_{L^2}$$
where we used that
$$||v_{\gamma}(\bar{w}_{\lambda,N,M,J,L,\tilde{t}})||_{C^{k',\beta'}}\leq C N^{k'+\beta'+\gamma-k-\beta}log(N)$$
which is obtained by applying lemmas 3.6 and 3.7 from \cite{Zoroacordoba}, the definition of $v_{\gamma}$ and the properties of $\bar{w}_{\lambda,N,M,J,L,\tilde{t}}$.
Then, after applying Gronwall we get
$$||W||_{L^2}\leq \frac{Ct}{N^{k+\beta+1}}(1+\frac{1}{\tilde{t}})$$
with $C$ depending on $\lambda,M,J,L$ and $T$.

The proof of the inequality for $H^{k+\beta+1-\delta}$ is very similar to that of lemmas 2.9 and 3.8 in \cite{Zoroacordoba}, so we will skip most of the details and focus on the few differences for the sake of briefness.  The idea is to use that

$$\frac{\partial||\Lambda^{s}W||^2_{L^{2}} }{\partial t}\leq 2|\int  (\Lambda^{s}W)\Lambda^{s}(\frac{\partial W}{\partial t})dx|,$$
and then bound each of the integrals obtained from the equation for $\frac{\partial W}{\partial t}$. For example, for the term

$$|\int  (\Lambda^{s}W)\Lambda^{s}(v_{\gamma}(W)\nabla \bar{w}_{\lambda,N,M,J,L,\tilde{t}})dx|$$
we use the Kato-Ponce inequalities obtained in theorem 1.2 of \cite{Dongli} to get for $s=k+\beta+1-\delta$ the inequality

\begin{align*}
    &|\int  (\Lambda^{s}W)\Lambda^{s}(v_{\gamma}(W)\cdot\nabla \bar{w}_{\lambda,N,M,J,L,\tilde{t}})dx|\\
    &\leq \sum_{|\mathbf{a}|\leq s-\gamma }|\int  \frac{1}{\mathbf{a}!}(\Lambda^{s}W)\Lambda^{s,\mathbf{a}}(v_{\gamma}(W))\cdot\nabla \partial^{\mathbf{a}}\bar{w}_{\lambda,N,M,J,L,\tilde{t}})dx|\\
    &+\sum_{|\mathbf{b}|< \gamma }|\int  \frac{1}{\mathbf{b}!}(\Lambda^{s}W)\partial^{\mathbf{b}}(v_{\gamma}(W))\cdot\nabla \Lambda^{s,\mathbf{b}}(\bar{w}_{\lambda,N,M,J,L,\tilde{t}}))dx|\\
    &+C||(\Lambda^{s}W)||_{L^{2}}||v_{\gamma}(W)||_{H^{s-\gamma}}||\Lambda^{\gamma}\nabla \bar{w}_{\lambda,N,M,J,L,\tilde{t}}||_{L^{\infty}},\\
\end{align*}

where we used the multi-index notation, $\mathbf{c}=(c_{1},c_{2})$  , $|\mathbf{c}|=(c_{1}^2+c_{2}^2)^{\frac12}$, $\mathbf{c}!=c_{1}!c_{2}!$, $\partial^{\mathbf{c}}=\partial^{\mathbf{c}}_{x}=\partial^{c_{1}}_{x_{1}}\partial^{c_{2}}_{x_{2}}$ and the operator $\Lambda^{s,\mathbf{c}}$ is defined via the Fourier transform as

$$\widehat{\Lambda^{s,\mathbf{j}}f}(\xi)=\widehat{\Lambda^{s,\mathbf{j}}}(\xi)\hat{f}(\xi)$$
$$\widehat{\Lambda^{s,\mathbf{j}}}(\xi)=i^{-|\mathbf{j}|}\partial^{\mathbf{j}}_{\xi}(|\xi|^s).$$


Most of these terms can be bounded  directly by $C||W||^2_{H^{s}}$  using the properties of $v_{\gamma}$, $\Lambda^{s,\mathbf{c}}$,  and $\bar{w}_{\lambda,N,M,J,L,\tilde{t}}$ plus the assumptions for $W$ (including the $L^2$ growth) and the interpolation inequality for Sobolev spaces.

A few terms, however, requires more careful consideration (and it also needs to be treated differently compared to the proofs in \cite{Zoroacordoba}), namely, for $i=1,2$

\begin{equation}\label{velocidadsingularcota}
    |\int  (\Lambda^{s}W)\Lambda^{s}(v_{i,\gamma}(W)) \frac{\partial \bar{w}_{\lambda,N,M,J,L,\tilde{t}}}{\partial x_{i}})dx|
\end{equation}
\begin{equation*}
    |\int  (\Lambda^{s}W)\Lambda^{s}(v_{i,\gamma}(W)) \frac{\partial W}{\partial x_{i}})dx|
\end{equation*}

since $||\Lambda^{s}(v_{\gamma}(W)||$ cannot by bounded by $||W||_{H^{s}}$. We will just focus on (\ref{velocidadsingularcota}) since the other term is done in exactly the same way. Here, we need to again act as in the $L^2$ case, rewriting (\ref{velocidadsingularcota}) as

$$\frac{1}{2}|\int  (\Lambda^{s}W)\big(v_{i,\gamma}[\Lambda^{s}(W)\frac{\partial  \bar{w}_{\lambda,N,M,J,L,\tilde{t}}}{\partial x_{i}}]-v_{i,\gamma}[\Lambda^{s}(W)]\frac{\partial  \bar{w}_{\lambda,N,M,J,L,\tilde{t}}}{\partial x_{i}}\big)dx|.$$

We can then use again the results obtained in \cite{Dongli} to get
\begin{align*}
    &\frac{1}{2}|\int  (\Lambda^{s}W)\big(v_{i,\gamma}[\Lambda^{s}(W)\frac{\partial  \bar{w}_{\lambda,N,M,J,L,\tilde{t}}}{\partial x_{i}}]-v_{i,\gamma}[\Lambda^{s}(W)]\frac{\partial  \bar{w}_{\lambda,N,M,J,L,\tilde{t}}}{\partial x_{i}}\big)dx|\\
    &\leq C||W||_{H^{s}}||W||_{H^{s}}||v_{i,\gamma}(\frac{\partial \bar{w}_{\lambda,N,M,J,L,\tilde{t}}}{\partial x_{i}})||_{L^{\infty}}\leq C ||W||^2_{H^{s}}.\\
\end{align*}

Combining the bounds for all the terms we obtain

$$\frac{\partial ||\Lambda^{s}W||^2_{L^{2}}}{\partial t}\leq C ||W||_{H^{s}}(||W||_{H^{s}}+(1+\frac{1}{\tilde{t}})\frac{C}{N^\delta})$$
and therefore, for $t\in[0,T]$
$$||W(x,t)||_{H^{s}}\leq Ce^{Ct}t||F||_{H^{s}}\leq C (1+\frac{1}{\tilde{t}})tN^{-\delta}$$
with $C$ depending on $T,\lambda,M,J$ and $L$.
\end{proof}

Combining all the technical results together we obtain.

\begin{theorem}\label{crecimientoperturbado}
Given $T,t_{crit},\epsilon_{1},\epsilon_{2},\epsilon_{3}>0$ and $t_{crit}\in(0,T]$, we can find $\lambda,M,J,L$ and $\tilde{t}$ such that, if $N$ is big enough, then for any $v_{error}$ satisfying
$$||v_{error}||_{C^{m}}\leq \frac{ N^{m}}{N^{k+\beta+2}}$$
for $m=0,1,...,k+2$ and

$$\frac{\partial v_{1,error}}{\partial x_{1}}+\frac{\partial v_{2,error}}{\partial x_{2}}=0$$
then the unique $H^{k+\beta+1-\delta}$ function $\tilde{w}_{\lambda,N,M,J,L,\tilde{t}}(x,t)$ satisfying

\begin{equation}\label{sqgvaprox}
    \frac{\partial \tilde{w}_{\lambda,N,M,J,L,\tilde{t}} }{\partial t}+(v_{\gamma}(\tilde{w}_{\lambda,N,M,J,L,\tilde{t}})+v_{error})\cdot(\nabla\tilde{w}_{\lambda,N,M,J,L,\tilde{t}})=0
\end{equation}
$$\tilde{w}_{\lambda,N,M,J,L,\tilde{t}}(x,0)=\bar{w}_{\lambda,N,M,J,L,\tilde{t}}(x,0)$$
exists for $t\in[0,T]$ and has the following properties.

\begin{itemize}
    \item $||\tilde{w}_{\lambda,N,M,J,L,\tilde{t}}(x,0)||_{C^{k,\beta}}\leq \epsilon_{1}$
    \item $||\tilde{w}_{\lambda,N,M,J,L,\tilde{t}}(x,t)||_{C^{k,\beta}}\geq \frac{1}{\epsilon_{2}}$ if $t\in(t_{crit}-Ct_{crit},t_{crit})$ with $C$ depending on $\epsilon_{1}$ and $\epsilon_{2}$,
    \item $||\tilde{w}_{\lambda,N,M,J,L,\tilde{t}}(x,0)||_{H^{k+\beta+1-\frac32\delta}},||\tilde{w}_{\lambda,N,M,J,L,\tilde{t}}(x,0)||_{L^{1}}\leq \epsilon_{3}$
\end{itemize}
\end{theorem}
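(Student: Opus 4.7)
I would fix $\tilde t=t_{crit}$ and choose the remaining parameters $\lambda,M,L,J$ in this order. The lower bound of Lemma \ref{crecimientockbeta},
\[
||\bar w(\cdot,\tilde t)-\lambda_{0}f_{1}||_{C^{k,\beta}}\geq \frac{\lambda}{4(4\pi)^\beta}-\lambda\Big(\frac{CL^{2}}{M}+\frac{C}{(NM)^{\delta}}+\frac{C}{N^{1-\delta}}\Big),
\]
forces $\lambda\geq 16(4\pi)^\beta/\epsilon_{2}$ and $M\gg L^{2}$, after which the $N$-dependent terms can be absorbed by eventually taking $N$ large. For the $C^{k,\beta}$ smallness at $t=0$, Lemma \ref{normackbetapert} applied to the initial perturbation (whose angles satisfy $|\alpha_{j}^{1}-\alpha_{j+1}^{1}|\geq C/(NJ)$, giving $c\sim 1/J$ in that lemma) yields $||\bar w(\cdot,0)-\lambda_{0}f_{1}||_{C^{k,\beta}}\leq C\lambda(1/J+(NM)^{-\delta}+L/M+J/L)$, which is $\leq \epsilon_{1}/2$ under the hierarchy $J\ll L\ll M$ with $J$ large (depending on $\epsilon_{1},\epsilon_{2}$) and $N$ large. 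The radial term $\lambda_{0}f_{1}$ is harmless in every estimate since $\lambda_{0}\leq CM^{1-\gamma}N^{-\gamma}\to 0$.

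With these parameters fixed, I take $N$ large and invoke Lemma \ref{evolucionerror} to obtain the unique $H^{k+\beta+1-\delta}$ solution $\tilde w$ on $[0,T]$ together with the interpolated bound $||W||_{H^{s}}\leq C(1+\tilde t^{-1})tN^{s-k-\beta-1}$ for $s\in[0,k+\beta+1-\delta]$. Lemma \ref{crecimientockbeta} provides pairs $x\in A_{s}$, $y\in B_{s}$ with $|x-y|\leq 4\pi/(NM)$ realizing the $C^{k,\beta}$ lower bound of $\bar w$; writing $\tilde w=\bar w+W$, the same difference quotient gives
\[
||\tilde w(\cdot,\tilde t)||_{C^{k,\beta}}\geq ||\bar w(\cdot,\tilde t)||_{C^{k,\beta}}-\frac{|\partial_{u}^{k}W(x)-\partial_{u}^{k}W(y)|}{|x-y|^{\beta}},
\]
and the last term is $o(1)$ as $N\to\infty$ by Sobolev embedding together with the $N^{-\delta}$ margin of Lemma \ref{evolucionerror}. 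This yields $||\tilde w(\cdot,\tilde t)||_{C^{k,\beta}}\geq 3/\epsilon_{2}$, and Lemma \ref{derivadackbeta} (rate $\leq C\lambda M/\tilde t$ for the $C^{k,\beta}$ seminorm of $\bar w-\lambda_{0}f_{1}$) propagates the bound $\geq 1/\epsilon_{2}$ to a window $(t_{crit}(1-C'),t_{crit})$ with $C'$ depending only on $\epsilon_{1},\epsilon_{2}$. The initial $H^{k+\beta+1-\tfrac{3}{2}\delta}$ and $L^{1}$ bounds follow immediately from the pseudo-solution estimates listed in Section 4: at $m=k+\beta+1-\tfrac{3}{2}\delta$ one has $||\bar w(\cdot,0)||_{H^{m}}\leq C\lambda_{0}+C\lambda M^{1-\tfrac{3}{2}\delta}N^{-\delta/2}$, while compact support together with the $L^{\infty}$ bound gives $||\bar w(\cdot,0)||_{L^{1}}\leq C(\lambda_{0}+\lambda(NM)^{-k-\beta})$; both tend to $0$ as $N\to\infty$ with the other parameters fixed.

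The main difficulty is the $C^{k,\beta}$-comparison between $\tilde w$ and $\bar w$: the Sobolev embedding $H^{k+\beta+1-\delta}(\mathds{R}^{2})\hookrightarrow C^{k,\alpha}$ requires $\alpha<\beta-\delta$, which is automatic for $k=1$ (since then $\beta>\gamma+2\delta>\delta$) but is not guaranteed for $k\geq 2$ under the bare hypothesis $k+\beta-2\delta>1+\gamma$. The resolution, suggested by the geometry of Lemma \ref{crecimientockbeta}, is not to estimate the full H\"older seminorm of $W$ but to bound only the specific difference quotient at the pair $(x,y)$ with $|x-y|\leq C/(NM)$ via interpolation between $||W||_{L^{2}}$ and $||W||_{H^{k+\beta+1-\delta}}$, choosing the interpolation exponent so that the loss $|x-y|^{-\beta}$ is compensated by the $N^{-\delta}$ decay of Lemma \ref{evolucionerror}.
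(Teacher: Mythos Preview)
Your overall strategy matches the paper's closely: fix $\tilde t=t_{crit}$, choose $\lambda$ from the lower bound in Lemma~\ref{crecimientockbeta}, then pick $J\ll L\ll M$ to make the initial $C^{k,\beta}$ norm small, invoke Lemma~\ref{evolucionerror} for the error $W=\tilde w-\bar w$, and propagate via Lemma~\ref{derivadackbeta}. The gap is precisely the step you flag at the end, and your proposed resolution does not work.

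Concretely, write $|x-y|\sim (NM)^{-1}$ and try to control the difference quotient of $\partial^{k}W$ by Sobolev embedding and interpolation. From Lemma~\ref{evolucionerror} you have $\|W\|_{H^{k+1+\alpha}}\leq C\,N^{\alpha-\beta}$ for every $\alpha\leq\beta-\delta$, hence $|\partial^{k}W|_{C^{\alpha}}\leq C\,N^{\alpha-\beta}$, and therefore
\[
\frac{|\partial^{k}W(x)-\partial^{k}W(y)|}{|x-y|^{\beta}}\leq |\partial^{k}W|_{C^{\alpha}}\,|x-y|^{\alpha-\beta}\leq C\,N^{\alpha-\beta}(NM)^{\beta-\alpha}=C\,M^{\beta-\alpha}.
\]
The right-hand side is at best $C\,M^{\delta}$ (attained at $\alpha=\beta-\delta$), which is a constant in $N$ but depends on $M$; since $M=J^{6}$ is chosen large as a function of $\epsilon_{1},\epsilon_{2}$, there is no reason for $C\,M^{\delta}$ to be small compared with $1/\epsilon_{2}$. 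Worse, when $\beta\leq\delta$ (which the hypotheses allow for $k\geq2$) you cannot even reach $\alpha>0$ and the argument collapses. So no choice of interpolation exponent compensates the loss $|x-y|^{-\beta}$.

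The paper's fix uses a different idea that you have not mentioned: the \emph{multiplicity} of the sets $A_{s},B_{s}$ in Lemma~\ref{crecimientockbeta}. There are at least $C_{1}MN^{2\delta}$ disjoint pairs, each of measure $\geq C_{2}(NM)^{-2}$. Since $\|W\|_{H^{k}}^{2}\leq C\,N^{-2(\beta+1)}$, pigeonhole gives one pair with $\int_{A_{s}\cup B_{s}}|\partial^{k}W|^{2}\leq C\,N^{-2(\beta+1+\delta)}$. Dividing by the measure yields points $x_{A}\in A_{s}$, $x_{B}\in B_{s}$ with $|\partial^{k}W(x_{A})|,|\partial^{k}W(x_{B})|\leq C\,N^{-(\beta+\delta)}$, and then
\[
\frac{|\partial^{k}W(x_{A})-\partial^{k}W(x_{B})|}{|x_{A}-x_{B}|^{\beta}}\leq C\,N^{-(\beta+\delta)}(NM)^{\beta}=C\,M^{\beta}N^{-\delta}\longrightarrow 0.
\]
This averaging argument (not Sobolev embedding) is what makes the comparison go through for all admissible $(k,\beta,\delta)$. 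Once you replace your interpolation step by this pigeonhole argument, the rest of your plan is correct and essentially identical to the paper's proof.
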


\begin{proof}

We first fix some parameters so the pseudo-solutions  $\bar w_{\lambda,N,M,J,L,\tilde{t}}$ have some desirable properties. We fix $\tilde{t}=t_{crit}$ so that, by lemma \ref{crecimientockbeta} we have

$$|\bar{w}_{\lambda,N,M,J,L,\tilde{t}}(x,t_{crit})|_{C^{k,\beta}}\geq \lambda(\frac{1}{4(4\pi)^{\beta}}- -\frac{CL^{2}  }{M}-C(NM)^{-\delta}-CN^{-1+\delta}).$$

Since we want $\tilde{w}$ to also have a very big $C^{k,\beta}$ norm, this suggest taking $\lambda\approx \frac{1}{\epsilon_{2}}$, and we will specifically consider $\lambda=\frac{1}{\epsilon_{2}}32(4\pi)^{\beta}.$

With $\lambda$ fixed, we can now focus on assuring that our initial conditions have a norm as small as required. Using lemmas \ref{normasckpert}, \ref{normackbetapert} and \ref{crecimientockbeta}  plus our choice for $\alpha_{1}^{j}$ we know that


$$||\tilde{w}_{\lambda,N,M,J,L,\tilde{t}}(x,0)||_{C^{k,\beta}}\leq C\lambda_{0}+ C\lambda(\frac{1}{J}+\frac{1}{(NM)^{\delta}}+\frac{J}{ L}+ (NM)^{-\beta}+\frac{L}{M})$$
$$=C\frac{M^{1-\gamma}}{\tilde{t}N^{\gamma}}+ C\lambda(\frac{1}{J}+\frac{1}{(NM)^{\delta}}+\frac{J}{ L}+ (NM)^{-\beta}+\frac{L}{M}).$$
and that there are sets $A$  and $B$ (depending on $\lambda,N,M,J$ and $L$) such that if $x\in A$ then there exists unitary $u$ depending on $x$ with

$$|\frac{\partial^{k}(\bar{w}_{\lambda,N,M,J,L,\tilde{t}}(x,\tilde{t})-\lambda_{0}f_{1})}{\partial u^{k}}|\geq  \lambda( \frac{1}{2(MN)^{\beta}}-\frac{CL^{2}  }{(NM)^{\beta}M}-C(NM)^{-(\delta+\beta)}-C(NM)^{-\beta}N^{-1+\delta} ) $$
and a set $B$ such that if $x\in B$ then for all unitary $u$ we have that
$$|\frac{\partial^{k}(\bar{w}_{\lambda,N,M,J,L,\tilde{t}}(x,\tilde{t})-\lambda_{0}f_{1})}{\partial u^{k}}|\leq  \lambda ( \frac{1 }{4(MN)^{\beta}}+ C(NM)^{-(\delta+\beta)}+\frac{CL^{2}  }{(NM)^{\beta}M})$$
furthermore, there is a set $S_{M,N,\delta}$ such that its cardinal fulfils $|S_{M,N,\delta}|\geq C_{1}MN^{2\delta}$ and

$$A=\cup_{s\in S_{M,N,\delta}} A_{s}$$

$$B=\cup_{s\in S_{M,N,\delta}}B_{s}$$
$d(x,y)\leq \frac{4\pi}{NM}$ if $x\in A_{s}, y\in B_{s}$,  and $|A_{s}|,|B_{s}|\geq \frac{C_{2}}{(NM)^2}$, with $C_{1}$ and $C_{2}$  constants.

By taking $t=t_{crit}$ and $J^2=L$, $M=L^3=J^6$ and  fixing $J$ big we can then obtain that


$$||\tilde{w}_{\lambda,N,M,J,L,\tilde{t}}(x,0)||_{C^{k,\beta}}\leq C\frac{J^{6(1-\gamma)}}{t_{crit}N^{\gamma}}+ \frac{\epsilon_{1}}{2}$$
and for $x\in A$ there exists $u$ unitary such that
\begin{equation}\label{vk}
    |\frac{\partial^{k}(\bar{w}_{\lambda,N,M,J,L,\tilde{t}}(x,\tilde{t})-\lambda_{0}f_{1})}{\partial u^{k}}|\geq  14 \frac{(4\pi)^{\beta} }{\epsilon_{2}(MN)^{\beta}}
\end{equation}
and for $x\in B$ and any unitary vector $u$

$$|\frac{\partial^{k}(\bar{w}_{\lambda,N,M,J,L,\tilde{t}}(x,\tilde{t})-\lambda_{0}f_{1})}{\partial u^{k}}|\leq  10 \frac{(4\pi)^{\beta} }{\epsilon_{2}(MN)^{\beta}}.$$

Note that, the choice of the parameters $J,L$ and $M$ depend only on $\epsilon_{1}$ and $\epsilon_{2}$.

We would like to obtain similar bounds for $\tilde{w}$, so we need to show that $\tilde{w}$ and $\bar{w}$ are close to each other in a useful way.. First, using lemma \ref{evolucionerror} we have

$$\sum_{i=0}^{k}\int (\frac{\partial^{k}(\tilde{w}-\bar{w})}{\partial x_{1}^{i}\partial x_{2}^{k-i}})^2\leq C (1+\frac{1}{\tilde{t}})N^{-2(\beta+1)}$$
and in particular (including from now on $(1+\frac{1}{\tilde{t}})$ inside of the constant $C$ since  it is constant with respect to $N$), there exists $A_{s}, B_{s}$ such that

$$\sum_{i=0}^{k}\int_{A_{s}} (\frac{\partial^{k}(\tilde{w}-\bar{w})}{\partial x_{1}^{i}\partial x_{2}^{k-i}})^2+\int_{B_{s}} (\frac{\partial^{k}(\tilde{w}-\bar{w})}{\partial x_{1}^{i}\partial x_{2}^{k-i}})^2\leq C N^{-2(\beta+1+\delta)}$$
so

$$inf_{x\in A_{s}}|\sum_{i=0}^{k}(\frac{\partial^{k}(\tilde{w}-\bar{w})}{\partial x_{1}^{i}\partial x_{2}^{k-i}})^2||A_{s}|\leq\sum_{i=0}^{k} \int_{A_{s}} (\frac{\partial^{k}(\tilde{w}-\bar{w})}{\partial x_{1}^{i}\partial x_{2}^{k-i}})^2\leq C N^{-2(\beta+1+\delta)}$$

$$inf_{x\in B_{s}}|\sum_{i=0}^{k}(\frac{\partial^{k}(\tilde{w}-\bar{w})}{\partial x_{1}^{i}\partial x_{2}^{k-i}})^2||B_{s}|\leq \sum_{i=0}^{k}\int_{B_{s}} (\frac{\partial^{k}(\tilde{w}-\bar{w})}{\partial x_{1}^{i}\partial x_{2}^{k-i}})^2\leq C N^{-2(\beta+\delta+1)}$$
and therefore

$$inf_{x\in A_{s}}|\sum_{i=0}^{k}(\frac{\partial^{k}(\tilde{w}-\bar{w})(x,t)}{\partial x_{1}^{i}\partial x_{2}^{k-i}})^2|\leq C N^{-2(\beta+\delta)}$$

$$inf_{x\in B_{s}}|\sum_{i=0}^{k}(\frac{\partial^{k}(\tilde{w}-\bar{w})(x,t)}{\partial x_{1}^{i}\partial x_{2}^{k-i}})^2|\leq C N^{-2(\beta+\delta)}.$$

Given a time $t\in[0,t_{crit}]$, we dconsider $x_{A}(t)\in A_{s}$, $x_{B}(t)\in B_{s}$ points fulfilling

$$|\sum_{i=0}^{k}(\frac{\partial^{k}(\tilde{w}-\bar{w})(x_{A}(t),t)}{\partial x_{1}^{i}\partial x_{2}^{k-i}})^2|\leq C N^{-2(\beta+\delta)}$$

$$|\sum_{i=0}^{k}(\frac{\partial^{k}(\tilde{w}-\bar{w})(x_{B}(t),t)}{\partial x_{1}^{i}\partial x_{2}^{k-i}})^2|\leq C N^{-2(\beta+\delta)}.$$

Now, if $u$ is the unitary vector given by (\ref{vk}) for $x_{B}(t)$, we have that

\begin{align*}
    &\big|\frac{\partial^{k}\tilde{w}(x_{A},t)-\tilde{w}(x_{B},t)}{\partial^{k}u}\big|\frac{1}{|x_{A}-x_{B}|^{\beta}}\\
    &\geq \big|\frac{\partial^{k}\bar{w}(x_{A},t)-\bar{w}(x_{B},t)}{\partial^{k}u}\big|\frac{1}{|x_{A}-x_{B}|^{\beta}}-C N^{-\delta}\\
    &\geq \big|\frac{\partial^{k}\bar{w}(x_{A},t_{crit})-\bar{w}(x_{B},t_{crit})}{\partial^{k}u}\big|\frac{1}{|x_{A}-x_{B}|^{\beta}}-||\bar{w}(x,t)-\bar{w}(x,t_{crit})||_{C^{k,\beta}}-C N^{-\delta} \\
    &\geq \frac{4}{\epsilon_{2}}-C\frac{\lambda J^{6}|t-\tilde{t}|}{\tilde{t}}-CN^{-\delta}\\
\end{align*}

where we used lemma \ref{derivadackbeta} in the last inequality. Then if $|C\frac{\lambda J^{6}|t-\tilde{t}|}{\tilde{t}}|\leq \frac{2}{\epsilon_{2}}$, $|CN^{-\delta}|\leq \frac{1}{\epsilon_{2}}$ we get

\begin{align*}
    &||\tilde{w}(x,t)||_{C^{k,\beta}}\geq \big|\frac{\partial^{k}\tilde{w}(x_{A},t)-\tilde{w}(x_{B},t)}{\partial^{k}v}\big|\frac{1}{|x_{A}-x_{B}|^{\beta}}\geq \frac{1}{\epsilon_{2}}\\
\end{align*}
and this will be true if we take $N$ big enough and $|t-\tilde{t}|\leq \frac{\tilde{t}}{\lambda J^{6}|t-\tilde{t}|}=C(\epsilon_{1},\epsilon_{2})$.

The only thing we need to prove is that we can also obtain 
$$||\tilde{w}_{\lambda,N,M,J,L,\tilde{t}}(x,0)||_{C^{k,\beta}}\leq \epsilon_{1}$$
$$||\tilde{w}(x,0)||_{H^{k+\beta+1-\frac32\delta}}\leq \epsilon_{3},$$
but
$$||\tilde{w}(x,0)||_{H^{k+\beta+1-\frac32\delta}}\leq \frac{C}{N^{\gamma}}+\frac{C}{N^{\frac{\delta}{2}}}$$
with $C$ depending on $J,L$ and $M$, so taking $N$ big enough
$$||\tilde{w}(x,0)||_{H^{k+\beta+1-\frac32\delta}}\leq \epsilon_{3}$$
and analogously,
$$||\tilde{w}_{\lambda,N,M,J,L,\tilde{t}}(x,0)||_{C^{k,\beta}}\leq C\frac{J^{6(1-\gamma)}}{t_{crit}N^{\gamma}}+ \frac{\epsilon_{1}}{2}$$
so again, taking $N$ big enough finishes the proof.



\end{proof}

\section{Strong ill-posedness and non-existence of solutions}

We are now ready to prove ill-posedness and non-existence of solutions. As mentioned in section 4, these results hold for $k\in\mathds{N}$, $\beta\in(0,1]$, $\gamma\in(0,1)$ with $k+\beta> 1+\gamma$ and $\delta$ is some constant $\delta\in(0,\frac12)$ such that $k+\beta+2 \delta > 1+\gamma$.

\begin{theorem}
Given $T,t_{crit,}\epsilon_{1},\epsilon_{2}>0$, there exist a function $w(x,0)$ such that $||w(x,0)||_{C^{k,\beta}}\leq \epsilon_{1}$ and  the only solution to (\ref{gSQG}) in $H^{k+\beta+1- \delta}$ with initial conditions $w(x,0)$ exists for $t\in[0,T]$ and fulfills that
$$||w(x,t_{crit})||_{C^{k,\beta}}\geq \frac{1}{\epsilon_{2}}.$$
\end{theorem}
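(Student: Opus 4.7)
The plan is to obtain this theorem as an essentially immediate corollary of Theorem \ref{crecimientoperturbado}, which has already absorbed all the genuinely delicate analysis. First I would specialize that theorem to the choice $v_{error}\equiv 0$. This trivially satisfies both the smoothness bound $||v_{error}||_{C^m}\leq N^m/N^{k+\beta+2}$ and the divergence-free condition, and turns the auxiliary equation (\ref{sqgvaprox}) into the genuine $\gamma$-SQG equation (\ref{gSQG}).

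Next, given $T,t_{crit},\epsilon_{1},\epsilon_{2}>0$, I would apply Theorem \ref{crecimientoperturbado} with these data and any fixed $\epsilon_{3}>0$ (this parameter does not appear in the statement we want to prove), taking the critical-time parameter $\tilde{t}$ to be some $t^{*}$ slightly larger than $t_{crit}$, chosen so that $t_{crit}\in(t^{*}-Ct^{*},t^{*})$, where $C=C(\epsilon_{1},\epsilon_{2})$ is the constant appearing in the second bullet of that theorem. Setting
\[
w(x,0):=\tilde{w}_{\lambda,N,M,J,L,t^{*}}(x,0),
\]
one reads off immediately from the first two bullets of Theorem \ref{crecimientoperturbado} that $||w(\cdot,0)||_{C^{k,\beta}}\leq \epsilon_{1}$ and $||\tilde{w}(\cdot,t_{crit})||_{C^{k,\beta}}\geq 1/\epsilon_{2}$, with $\tilde{w}$ existing on the whole interval $[0,T]$. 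The initial datum is smooth and compactly supported by construction, so in particular it lies in $H^{k+\beta+1-\delta}$.

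The only remaining task is to identify the function $\tilde{w}_{\lambda,N,M,J,L,t^{*}}$ produced by Theorem \ref{crecimientoperturbado} with \emph{the} unique $H^{k+\beta+1-\delta}$ solution of (\ref{gSQG}) starting from $w(x,0)$. For this I would invoke the local well-posedness result of \cite{Chaecordoba} for $\gamma$-SQG in $H^{s}$ with $s>2+\gamma$: the running hypothesis $k+\beta-2\delta>1+\gamma$ gives
\[
k+\beta+1-\delta \;>\; 2+\gamma+\delta \;>\; 2+\gamma,
\]
so $H^{k+\beta+1-\delta}$ sits strictly above the well-posedness threshold and uniqueness in that class is known. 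Since Theorem \ref{crecimientoperturbado} has already produced $\tilde{w}$ as an $H^{k+\beta+1-\delta}$ solution on all of $[0,T]$, it must be the unique such solution, and the desired $C^{k,\beta}$ growth is inherited by the actual $\gamma$-SQG solution.

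All the serious work --- the construction of the pseudo-solutions in Section 4, the control of the source term $F_{\lambda,N,M,J,L,\tilde{t}}$, and the error-propagation estimate of Lemma \ref{evolucionerror} --- has already been carried out. Consequently there is no real obstacle at this stage; the proof reduces to parameter-bookkeeping, namely placing $t_{crit}$ inside the interval $(t^{*}-Ct^{*},t^{*})$ and checking that $k+\beta+1-\delta$ lies in the local well-posedness range of \cite{Chaecordoba}.
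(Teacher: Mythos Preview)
Your proposal is correct and follows the same route as the paper: the paper's proof is the single sentence ``This is just a direct application of Theorem \ref{crecimientoperturbado} with $v_{1,error}=v_{2,error}=0$.'' Your added bookkeeping---shifting $t_{crit}$ to a nearby $t^{*}$ so that the desired time lands strictly inside the growth interval, and invoking the $H^{s}$ well-posedness of \cite{Chaecordoba} to identify $\tilde{w}$ with the unique $\gamma$-SQG solution---is a legitimate way to make that one-line argument watertight, but it is not a different strategy.
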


\begin{proof}
This is just a direct application of theorem \ref{crecimientoperturbado} with $v_{1,error}=v_{2,error}=0$.
\end{proof}

\begin{theorem}
Given $t_{0},\epsilon>0$, there exist a function $w(x,0)$ such that $||w(x,0)||_{C^{k,\beta}}\leq \epsilon$ and that the only solution to  (\ref{gSQG}) in $H^{k+\beta+1-\frac32 \delta}$ with initial conditions $w(x,0)$ exists for $t\in[0,t_{0}]$ and fulfills that, for $t\in(0,t_{0}]$, $||w(x,t)||_{C^{k,\beta}}=\infty.$
\end{theorem}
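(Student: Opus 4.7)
The plan is to construct $w(x,0)$ as a countable sum
$$w(x,0)=\sum_{n=1}^{\infty}\tilde w_n(x-x_n,0),$$
where each summand is a fast-growing piece provided by Theorem \ref{crecimientoperturbado} and the centers $x_n\in\mathds{R}^2$ are chosen so far apart that the translated supports are pairwise disjoint. Since each pseudo-solution of Theorem \ref{crecimientoperturbado} is supported in the annulus $\{1/2<|x|<K_\gamma\}$, disjointness is achieved as soon as $|x_n-x_m|>2K_\gamma$ for all $n\neq m$, and in that case
$$||w(\cdot,t)||_{C^{k,\beta}}\geq\sup_n||\tilde w_n(\cdot-x_n,t)||_{C^{k,\beta}},$$
so by arranging the target blow-up times $\tau_n$ densely in $(0,t_0]$ and the peak magnitudes $M_n\to\infty$, the supremum is forced to be $+\infty$ at every $t\in(0,t_0]$.

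Concretely, I enumerate a sequence $(\tau_n,M_n)$ with $M_n\to\infty$ so that for every $t\in(0,t_0]$ and every $N$ there exists $n\geq N$ with $t\in(\tau_n(1-C_n),\tau_n)$, where $C_n$ is the window length from Theorem \ref{crecimientoperturbado} associated with $\epsilon_2=1/M_n$. For each $n$ I apply Theorem \ref{crecimientoperturbado} with $T=t_0$, $t_{crit}=\tau_n$, $\epsilon_1=\epsilon_3=\epsilon/2^n$ and $\epsilon_2=1/M_n$, producing $\tilde w_n$ that solves the perturbed equation with an admissible velocity error $v_{error,n}$ and has $C^{k,\beta}$ norm at least $M_n$ on the window around $\tau_n$. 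I then pick $|x_n|$ growing extremely fast so that, by Lemma \ref{decaimiento}, the velocity generated by piece $m$ and its first $k+2$ derivatives, evaluated on the support of $\tilde w_n(\cdot-x_n)$, decay like $|x_n-x_m|^{-(2+\gamma)}$ up to factors polynomial in the $N_m$'s. Summing in $m\neq n$ and exploiting the geometric smallness of the individual $L^1$ norms, the resulting total $v_{error,n}$ satisfies the required bound $||v_{error,n}||_{C^m}\leq N_n^m/N_n^{k+\beta+2}$ for $m=0,\dots,k+2$, and it is automatically divergence free as a sum of divergence free fields.

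The coupled system is then closed either by Picard iteration on the family $\{v_{error,n}\}_n$ or, more directly, by invoking the paper's local well-posedness in $H^s$ for $s>2+\gamma$ (which applies since $k+\beta+1-\tfrac32\delta>2+\gamma$ under the hypothesis $k+\beta-2\delta>1+\gamma$) for the combined datum $w(\cdot,0)$, whose $C^{k,\beta}$ and $H^{k+\beta+1-\tfrac32\delta}$ norms both sum geometrically to at most $\epsilon$; uniqueness, combined with the fact that disjoint supports make the sum of translates an exact $\gamma$-SQG solution on $[0,t_0]$, forces $w(x,t)=\sum_n\tilde w_n(x-x_n,t)$, and the denseness construction then yields $||w(\cdot,t)||_{C^{k,\beta}}=\infty$ for every $t\in(0,t_0]$. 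The main obstacle is the triple bookkeeping: the peaks $M_n$ must diverge, the windows $C_n\tau_n$ shrink with $M_n$ so the $\tau_n$'s must be chosen increasingly dense, and the separations $|x_n-x_m|$ must grow fast enough (as a function of the $N_n$'s and $M_n$'s) to keep the induced $v_{error,n}$ admissible. All three requirements are polynomial in the relevant parameters, while we have unbounded freedom in choosing $|x_n|$, so none is obstructive, but the indexing has to be carried out carefully.
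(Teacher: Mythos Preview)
Your proposal is correct and follows essentially the same strategy as the paper: an infinite sum of far-separated translates of the fast-growing pieces from Theorem \ref{crecimientoperturbado}, with existence of the full solution on $[0,t_0]$ secured by the summable $H^{k+\beta+1-\frac32\delta}$ norms, and each localized piece then identified (via uniqueness) with a solution of the perturbed equation whose $v_{error}$ is the velocity generated by the remaining pieces. The only cosmetic differences are that the paper uses a double index $(i,j)$ (target magnitude $j$, window-tiling index $i$ covering $[1/j,t_0]$) in place of your single sequence $(\tau_n,M_n)$, and bounds the far-field interaction directly from the kernel decay and the summable $L^1$ norms rather than through Lemma \ref{decaimiento}.
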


\begin{proof}
To obtain initial conditions with the desired properties, we will consider initial conditions of the form

$$\sum_{j=1}^{\infty}\sum_{i=1}^{G(j,\epsilon)}T_{R_{i,j}}(w_{i,j}(x))$$

where $T_{R}(f(x_{1},x_{2}))=f(x_{1}+R,x_{2})$. We will first choose $w_{i,j}(x)$ and afterwards we will pick the values of $R_{i,j}$.

First, fixed $j$, we will restrict to choices for $w_{i,j}$ such that they are initial conditions given by theorem \ref{crecimientoperturbado} with $\frac{1}{\epsilon_{2}}=j$, $\epsilon_{1}=\epsilon$ and $T=t_{0}$. Then if we choose some $t_{crit}=t_{crit,i,j}$ and  we call $\tilde{w}_{i,j}$ a solution to (\ref{gSQGvext})  with the initial conditions given by $w_{i,j}(x)$  and an appropriate $v_{ext}$ fulfilling $||v_{ext}||_{C^{k+2}}\leq C_{i,j}$, we would then have that for $t\in[t_{crit,i,j}-Ct_{crit,i,j},t_{crit,i,j}]$

$$||\tilde{w}_{i,j}(x,t)||_{C^{k,\beta}}\geq j$$
for some $C$ depending on $\epsilon$ and $j$. Therefore, we can, by choosing $t_{crit,i,j}$ appropriately, obtain, for any $t\in[\frac{1}{j},t_{0}]$

$$sup_{i=1,2,...,G(j,\epsilon)}||\tilde{w}_{i,j}(x,t)||_{C^{k\beta}}\geq j$$
with $G(j,\epsilon)$ a finite number depending on $j$.

Furthermore, we can now choose $\epsilon_{3}$ in theorem \ref{crecimientoperturbado} so that

$$||w_{i,j}(x)||_{H^{k+\beta+1-\frac32 \delta}}\leq \frac{c_{0}2^{-j}}{G(j,\epsilon)}$$
$$||w_{i,j}(x)||_{L^{1}}\leq \frac{2^{-j}}{G(j,\epsilon)}$$
with $c_{0}$ a constant small enough so that any solution to $\gamma$-SQG with $||w_{0}(x)||_{H^{k+\beta+1-\frac32 \delta}}\leq c_{0}$ exists for $t\in[0,t_{0}]$ and $||w(x,t)||_{H^{k+\beta+1-\frac32 \delta}}\leq 1$ for $t\in[0,t_{0}]$.
Therefore we know that, independently of the choice of $R_{i,j}$, for $t\in[0,t_{0}]$ there exists a unique $H^{k+\beta+1-\frac32\delta}$ solution to (\ref{gSQG}) with initial conditions,
$\sum_{j=1}^{\infty}\sum_{i=1}^{G(j)}T_{R_{i,j}}(w_{i,j}(x))$  and, furthermore, if we call this solution $w_{\infty}(x,t)$ (which still depends on the choice of $R_{i,j}$, but we omit it for simplicity of notation), then we have that there is a constant $v_{max}$ such that, for $t\in[0,t_{0}]$

$$||v_{1}(w_{\infty})||_{L^{\infty}},||v_{2}(w_{\infty})||_{L^{\infty}}\leq v_{max}.$$
With this, and using that there exists $D\in\mathds{R}$ such that $supp(w_{i,j}(x))\subset B_{D}(0)$, we have that, if we choose  the $R_{i,j}$ so that $|R_{i_{1},j_{1}}-R_{i_{2},j_{2}}|\geq 4t_{0} v_{max}+2D+sup(P_{i_{1},j_{1}},P_{i_{2},j_{2}})$ with $P_{i,j}>0$ then we have that  $$w_{i,j,\infty}(x,t):=1_{B_{D+2t_{0}v_{max}}(-R_{i,j},0)}w_{\infty}(x,t)$$
fulfils for $t\in[0,t_{0}]$ the evolution equation

$$\frac{\partial w_{i,j,\infty} }{\partial t}+(v_{\gamma}(w_{i,j,\infty})+v(w_{\infty}-w_{i,j,\infty}))\cdot(\nabla w_{i,j,\infty})=0$$
and
$$||v(w_{\infty}-w_{i,j,\infty}))||_{C^{k+2}}\leq \frac{C}{P_{i,j}^{2+\gamma}}.$$

But by the choice of $w_{i,j}(x)$ and using that the supports of the $w_{i,j\infty}$ are disjoint, we have that if
\begin{equation}\label{cij}
    ||v(w_{\infty}-w_{i,j,\infty}))||_{C^{k+2}}\leq C_{i,j}
\end{equation}
then for $t\in(0,t_{0}]$

\begin{align*}
    &||w_{\infty}(x,t)||_{C^{k,\beta}}= \text{sup}_{j\in\mathds{N},i=1,2,...,G(j,\epsilon)}||w_{i,j,\infty}(x,t)||_{C^{k,\beta}}=\infty\\
\end{align*}
and taking $P_{i,j}$ big enough so that (\ref{cij}) is fulfilled finishes the proof.

\end{proof}
\section*{Acknowledgements}
This work is supported in part by the Spanish Ministry of Science
and Innovation, through the “Severo Ochoa Programme for Centres of Excellence in R$\&$D
(CEX2019-000904-S)” and 114703GB-100. DC and LMZ were partially supported by the ERC Advanced Grant 788250. DC gratefully acknowledges the support of the Charles Simonyi Endowment at the Institute for Advanced Study (Princeton).

\bibliographystyle{alpha}

\end{document}